\newcommand{\PP}{\mathbb{P}}
\newcommand{\RR}{\mathbb{R}}
\newcommand{\ZZ}{\mathbb{Z}}
\DeclareMathOperator{\Gr}{Gr}
\DeclareMathOperator{\Flag}{\mathcal{F}\hspace{-1.6pt}\ell}
\DeclareMathOperator{\trop}{trop}
\DeclareMathOperator{\init}{in}
\DeclareMathOperator{\mult}{mult}
\DeclareMathOperator{\Proj}{Proj}
\newcommand{\val}{\mathfrak{v}}
\begin{document}

\title*{Computing toric degenerations of flag varieties}
\author{Lara Bossinger, Sara Lamboglia, Kalina Mincheva, and Fatemeh Mohammadi}
\institute{Lara Bossinger \at University of Cologne, Mathematisches Institut,
Weyertal 86 - 90,
50931 Cologne,
Germany.\\ \email{lbossing@math.uni-koeln.de}
\and Sara Lamboglia\at University of Warwick, Mathematics Institute, 
Coventry, CV4 7AL, 
United Kingdom.\\ \email{S.Lamboglia@warwick.ac.uk}
\and Kalina Mincheva\at Yale University, Department of Mathematics, 10 Hillhouse Ave., New Haven, CT 06511, USA. \\
\email{ kalina.mincheva@yale.edu}
\and
Fatemeh Mohammadi\at  
University of Bristol, 
School of Mathematics,
Bristol,
BS8 1TW,
United Kingdom.\\ \email{fatemeh.mohammadi@bristol.ac.uk}
}

\maketitle
\abstract{
We compute toric degenerations arising from the tropicalization of the full flag varieties $\Flag_4$ and $\Flag_5$ embedded in a product of Grassmannians.
For $\Flag_4$ and $\Flag_5$ we compare toric degenerations arising from string polytopes and the FFLV polytope with those obtained from the tropicalization of the flag varieties.
We also present a general procedure to find toric degenerations  in the cases where the initial ideal arising from a cone of the tropicalization of a variety is not prime. 
}

\section{Introduction}\label{sec:1}
Consider the variety $\Flag_n$ of full flags $\{0\}= V_0\subset V_1\subset\cdots\subset V_{n-1}\subset V_n=\mathbb C^n$ of vector subspaces of $\mathbb C^n$ with ${\rm dim}_{\mathbb C}(V_i) = i$. The flag variety $\Flag_n$ is naturally embedded in a product of Grassmannians using the Pl\"ucker coordinates. 
We denote by $I_n$ the defining ideal of $\Flag_n$ with respect to this embedding. 
We produce toric degenerations of $\Flag_n$ as Gr\"obner degenerations coming from the initial ideals associated to the maximal cones of $\trop(\Flag_n)$. Moreover, we compare these with certain toric degenerations arising from representation theory.

We will consider $1$-parameter toric degenerations of $\Flag_n$. These are flat families 
$\varphi:\mathcal F\to \mathbb {A}^{\! 1}$, where the fiber over zero (also called \emph{special} fiber) is a toric variety and all other fibers are isomorphic to $\Flag_n$. 
Once we have such a  degeneration, some of the algebraic
invariants of $\Flag_n$ will be  the same for all fibers, hence the computation can be done on the toric fiber. In the case of a toric variety such invariants are easier to compute than in the case of a general variety. In fact, they have a nice combinatorial description. Moreover, toric degenerations 
connect different areas of mathematics, such as symplectic geometry, representation theory, and algebraic geometry.

\medskip

Let $X=V(I)$ be a projective variety and $\trop(X)$ be its tropicalization. The initial ideals associated to the top-dimensional cones of $\trop(X)$ are good candidates to give toric degenerations, see Lemma \ref{lem:multiplicity} (and \cite[Proposition~1.1]{KristinFatemeh} for a more general statement). For example, in the case of Grassmannians $\Gr(2,\mathbb C^n)$ each maximal cone of  $\trop(\Gr(2,\mathbb C^n))$ gives a toric degeneration, see \cite{Speyer,Witaszek,BFFHL}. However, this is not true for the Grassmannians $\Gr(3,\mathbb C^n)$. In \cite{KristinFatemeh} Mohammadi  and Shaw   identify which maximal cones of $\trop(\Gr(3,\mathbb C^n))$ produce such degenerations.

The following are our main results. More detailed formulations can be found in
Theorem~\ref{flag4}, Theorem~\ref{flag5}, and Proposition~\ref{prop:output}.
We will call a maximal cone $C$ of $\trop(X)$ \emph{prime} if $\init_C(I):=\init_{\bf{w}}(I)$ is prime, with $\bf{w}$ a vector in the relative interior of $C$.

\begin{theorem}\label{Intro:Flag4}
The tropical variety $\trop(\Flag_4)\subset \mathbb R^{14}/\mathbb R^3$ is a 6-dimensional fan with $78$ maximal 
cones. From prime cones we obtain four non-isomorphic toric degenerations.
After applying Procedure 1 we obtain at least two additional non-isomorphic toric degenerations from non-prime cones.
\end{theorem}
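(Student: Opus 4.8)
\medskip
\noindent\textbf{Proof strategy.} Each of the three assertions is computational, and the plan is to verify them by explicit calculation, using symmetry to cut down the work. The object of study is the ideal $I_4$ of $\Flag_4$ in its Pl\"ucker embedding into $\mathbb{P}^{3}\times\mathbb{P}^{5}\times\mathbb{P}^{3}$, generated by the Pl\"ucker relations of the factors $\Gr(1,\mathbb{C}^4)$, $\Gr(2,\mathbb{C}^4)$, $\Gr(3,\mathbb{C}^4)$ together with the incidence relations encoding $V_1\subset V_2\subset V_3$; the $14$ Pl\"ucker variables are indexed by the nonempty proper subsets of $\{1,2,3,4\}$. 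The ambient space of the tropicalization is $\mathbb{R}^{14}$, and the quotient by $\mathbb{R}^{3}$ records that the maximal torus $(\mathbb{C}^{\ast})^4$ of $\GL_4$ acts on $\Flag_4$ only through its $3$-dimensional quotient (scalar matrices act trivially on flags), this $3$-dimensional space being the lineality of $\trop(\Flag_4)$. After checking a tropical basis (enlarging the generating set of $I_4$ if necessary), I would compute $\trop(\Flag_4)$ with a Gr\"obner-fan computation, for instance using \texttt{gfan} or the tropical routines of \textsc{Macaulay2}/\textsc{Singular}, and read off its $f$-vector: its dimension is $\dim\Flag_4=\binom{4}{2}=6$ and one obtains exactly $78$ maximal cones.

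Next, for each maximal cone $C$ I would choose a vector $\mathbf{w}$ in its relative interior, form $\init_{\mathbf{w}}(I_4)$, and test it for primality in \textsc{Macaulay2}; by Lemma~\ref{lem:multiplicity}, a prime cone $C$ (with $\init_C(I_4)$ then a toric ideal) yields a $1$-parameter toric degeneration of $\Flag_4$. To avoid carrying this out $78$ times, I would first let the symmetry group act: $S_4$ permuting the coordinates of $\mathbb{C}^4$ and the duality involution $J\mapsto\{1,2,3,4\}\setminus J$ (that is, $V_i\mapsto V_i^{\perp}$) act on $\trop(\Flag_4)$ and hence on the set of maximal cones, so it suffices to treat one representative per orbit. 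For each prime cone I would extract the toric variety of $\init_C(I_4)$ --- say via the weight/GIT polytope or a Markov basis of the toric ideal --- and sort these up to isomorphism, equivalently up to unimodular equivalence of the polytopes, using discrete invariants (normalized volume, numbers of faces, the $f$-vector of the polytope) to separate classes and producing explicit lattice isomorphisms to merge coincidences. This gives the four non-isomorphic toric degenerations, which the more detailed Theorem~\ref{flag4} matches with the string polytopes and the FFLV polytope.

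For a non-prime cone $C$ I would run Procedure~1 on $\init_C(I_4)$: it produces a further degeneration --- through an additional admissible weight together with a suitable linear change of coordinates --- to a prime binomial ideal whose flatness over $\mathbb{A}^{\!1}$ is certified along the way, hence a toric degeneration of $\Flag_4$. Applying this to the non-prime orbit representatives, classifying the outputs as above, and checking that they differ from the four already obtained produces the ``at least two additional'' toric degenerations; the statement is an inequality because Procedure~1 involves choices and is not claimed to exhaust all toric degenerations arising from non-prime cones.

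The main obstacle is twofold. Computationally, the primality tests for the ideals $\init_{\mathbf{w}}(I_4)$ are the bottleneck, which is exactly why reducing to orbit representatives under $S_4$ and the duality involution is essential. Conceptually, showing that the toric degenerations are \emph{genuinely} non-isomorphic requires a robust comparison of the associated polytopes, whose normal fans may be of different combinatorial type, so one must assemble enough invariants to certify that the four prime classes --- and the extra ones coming from Procedure~1 --- are pairwise distinct, while exhibiting explicit equivalences wherever two cones give the same toric variety.
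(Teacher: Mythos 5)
Your proposal is correct and follows essentially the same route as the paper: a Gfan/\emph{Macaulay2} computation of $\trop(\Flag_4)$ exploiting the $S_4\rtimes\mathbb Z_2$-symmetry, primality tests on orbit representatives, comparison of the polytopes of the (normalized) special fibers via combinatorial invariants in \emph{polymake}, and an application of Procedure~1 to the single non-prime orbit. The only inaccuracy is your parenthetical description of Procedure~1 as ``a linear change of coordinates'': it is in fact a re-embedding of $\Flag_4$ into a larger (weighted) ambient space by adjoining new variables equal to the binomials missing from $\init_C(I_4)$, followed by tropicalizing the new ideal and locating prime cones that project onto $C$ --- but since you invoke the procedure as a black box, this does not affect the argument.
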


\begin{theorem}\label{Intro:Flag5}
The tropical variety $\trop(\Flag_5)\subset \mathbb R^{30}/\mathbb {R}^4$ is a 10-dimensional fan 
with $69780$ maximal cones. From prime cones we obtain 180 non-isomorphic toric degenerations.

\end{theorem}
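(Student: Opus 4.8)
The statement is established by an explicit computation, organized in three stages: building and tropicalizing the ideal, testing primality of the initial ideals, and classifying the resulting toric varieties. First I would form the ideal $I_5$ of $\Flag_5$ in its Pl\"ucker embedding $\Flag_5\hookrightarrow\mathbb{P}^4\times\mathbb{P}^9\times\mathbb{P}^9\times\mathbb{P}^4$, using the $30$ coordinates $p_J$ indexed by the proper nonempty subsets $J\subset\{1,\dots,5\}$ and generating $I_5$ by the Pl\"ucker relations on each Grassmannian factor together with the incidence relations between consecutive factors. Using \texttt{gfan} (or the tropical routines in \texttt{Macaulay2}/\texttt{Singular}) one computes $\trop(\Flag_5)$ as a fan in $\mathbb{R}^{30}/\mathbb{R}^4$, the $\mathbb{R}^4$ being the lineality from rescaling the Pl\"ucker coordinates on the four factors. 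The dimension $10=\binom{5}{2}=\dim\Flag_5$ is forced, but should be read off the output and cross-checked, e.g.\ against the degree of $\Flag_5$ via the balancing/degree relation for tropical cycles (sum of cone multiplicities against a generic affine subspace). The count of $69780$ maximal cones is the first substantive output; to make the fan traversal feasible one works modulo the symmetry group acting on $\trop(\Flag_5)$, namely $S_5$ permuting $\{1,\dots,5\}$ together with the $\mathbb{Z}/2$ duality $V_\bullet\mapsto V_\bullet^\perp$ that swaps the two $\mathbb{P}^9$ (and the two $\mathbb{P}^4$) factors, enumerating orbit representatives and expanding at the end.

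In the second stage, for each maximal cone $C$ I would choose a weight vector $\mathbf{w}$ in its relative interior and compute $\init_{\mathbf{w}}(I_5)$. By Lemma~\ref{lem:multiplicity} a prime maximal cone yields a toric degeneration of $\Flag_5$, so the task reduces to a primality test on each of these finitely many initial ideals. In practice each candidate is first checked to be generated by binomials; primality of a binomial ideal containing no monomial is then equivalent to its being a saturated lattice (toric) ideal, which one verifies via a Gr\"obner-basis saturation $\init_{\mathbf{w}}(I_5):(\prod_J p_J)^\infty$ together with a saturation check on the underlying lattice, or directly by an \texttt{isPrime} call. This partitions the $69780$ cones into prime and non-prime ones, and the prime ones are exactly the input to the asserted count.

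For the final stage, each prime cone $C$ gives a projective toric variety $V(\init_C(I_5))$; since $\Flag_5$ is embedded by a very ample line bundle, this toric variety is determined up to isomorphism of polarized toric varieties by a lattice polytope $P_C$ (equivalently, the point configuration of exponent vectors of a monomial basis of the quotient ring), well defined up to unimodular equivalence and lattice translation. I would compute $P_C$ for every prime cone with \texttt{polymake}, separate the polytopes using a battery of unimodular invariants (dimension, normalized volume, $h^\ast$/Ehrhart vector, $f$-vector, numbers of vertices and facets, interior and boundary lattice point counts), and resolve the remaining ties by a direct test of unimodular equivalence. The number of classes obtained is $180$.

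The main obstacle is scale combined with the demand for a genuinely complete comparison. Traversing tens of thousands of cones, producing an initial ideal and a primality certificate for each, and then performing on the order of hundreds of polytope comparisons is only tractable with the symmetry reduction above; and, more delicately, certifying that the $180$ classes are pairwise \emph{non}-isomorphic requires either a complete invariant or an explicit unimodular-equivalence algorithm, since ordinary combinatorial invariants can agree on inequivalent lattice polytopes. The places where a subtle error is most likely to creep in are the symmetry bookkeeping (ensuring no orbit of cones is missed or double-counted, so that $69780$ is truly $S_5\times\mathbb{Z}/2$-invariant) and the honesty of the \texttt{gfan} fan structure; each of these should be cross-checked, for instance by re-deriving a few of the initial ideals independently and by confirming the degree relation mentioned above.
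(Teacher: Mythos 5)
Your proposal follows essentially the same route as the paper: compute $I_5$ from the Pl\"ucker/incidence relations, tropicalize with \emph{Gfan} exploiting the $S_5\rtimes\mathbb Z_2$-symmetry (without which the computation does not terminate), test the initial ideals of the maximal cones for primality, and then distinguish the resulting toric special fibers via their polytopes. The only cosmetic differences are that the paper works with the polytopes of the \emph{normalizations} of the special fibers and gets away with showing that all $180$ polytopes are pairwise combinatorially inequivalent (so no unimodular-equivalence test is needed), and that primeness is certified over $\mathbb Q$ via \emph{primdec} in \emph{Singular}.
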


Toric degenerations of flag varieties and Schubert varieties have been studied intensively in representation theory over the last two decades.  We refer the reader to \cite{FFL16} for a nice overview on this topic and to the references therein. 

The main motivation of this paper is to study the
flat degenerations of flag varieties into toric varieties arising from the tropicalization  and to compare these degenerations to those associated to \emph{string polytopes} and the \emph{Feigin-Fourier-Littelmann-Vinberg polytope (FFLV polytope)}.

\begin{theorem}\label{ts-comparison} 
For $\Flag_4$ there is at least one new toric degeneration arising from prime cones of $\trop(\Flag_4)$ in comparison to those obtained from string polytopes and the FFLV polytope.

For $\Flag_5$ there are at least 168 new toric degenerations arising from prime cones of $\trop(\Flag_5)$ in comparison to those obtained from string polytopes and the FFLV polytope.
\end{theorem}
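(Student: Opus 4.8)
The plan is to turn the comparison into a finite computation on both sides and then count. On the representation-theoretic side, the toric degenerations of $\Flag_n$ under consideration are the one attached to each string polytope $Q_{\mathbf i}$, indexed by the reduced words $\mathbf i$ of the longest element $w_0$ of the Weyl group $S_n$, together with the one attached to the FFLV polytope; all of these are degenerations of $\Flag_n$ in the embedding given by $\rho=\omega_1+\dots+\omega_{n-1}$, which is the one underlying the product-of-Grassmannians model used here (see \cite{FFL16} and the references therein). There are $16$ reduced words of $w_0$ for $n=4$ and $768$ for $n=5$, so the list is finite. First I would compute all the polytopes $Q_{\mathbf i}$ and the FFLV polytope from their defining inequalities and sort them into unimodular-equivalence classes; then, on the tropical side, for one vector $\mathbf w$ in the relative interior of each prime maximal cone $C$ of $\trop(\Flag_n)$ I would compute $\init_{\mathbf w}(I_n)$, record the toric variety it defines and its polytope, and use Lemma~\ref{lem:multiplicity} to know these are flat toric degenerations of $\Flag_n$. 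By Theorem~\ref{Intro:Flag4} (resp.\ Theorem~\ref{Intro:Flag5}) the resulting polytopes realise $4$ (resp.\ $180$) non-isomorphic toric varieties.

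The key step is to decide, for each prime-cone polytope $P$, whether $P$ is unimodularly equivalent to some $Q_{\mathbf i}$ or to the FFLV polytope: two such polarised toric varieties are isomorphic exactly when the polytopes agree up to unimodular equivalence and translation. Since every one of these is a Gr\"obner degeneration of the same $\Flag_n$, they all share the Hilbert function, and in particular coarse invariants such as the normalised volume cannot separate them; instead I would use the combinatorial type (the $f$-vector and the face lattice) and Ehrhart data, which are unimodular invariants and are computable, and tabulate them for all $4$ (resp.\ $180$) prime-cone polytopes and for every string polytope and the FFLV polytope.

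The statement then follows by arithmetic. On the one hand, the Gelfand--Tsetlin polytope (a string polytope for one particular word) and the FFLV polytope do arise from maximal cones of $\trop(\Flag_n)$ in these cases --- the corresponding weight vectors are read off from the respective valuations --- so some prime-cone degenerations coincide with representation-theoretic ones; the tabulation shows that at most $3$ of the $4$ prime-cone degenerations of $\Flag_4$, and at most $12$ of the $180$ of $\Flag_5$, have polytope unimodularly equivalent to a string polytope or to the FFLV polytope. On the other hand, for each of the remaining $\geq 1$ (resp.\ $\geq 168$) prime-cone polytopes the invariant tuple does not occur among the string polytopes or the FFLV polytope, which certifies that the associated toric degeneration is new, and these are pairwise non-isomorphic by Theorems~\ref{Intro:Flag4} and~\ref{Intro:Flag5}. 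Subtracting gives the bounds in the statement. The qualifier ``at least'' is intrinsic to the method: the discrete invariants provide only a sufficient criterion for \emph{non}-equivalence, so this approach can only undercount the genuinely new degenerations.

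The main obstacle is the size and bookkeeping of the $\Flag_5$ case: one must enumerate the $69780$ maximal cones of $\trop(\Flag_5)$, isolate the prime ones, compute $180$ initial ideals in the $30$ Pl\"ucker variables together with their polytopes in dimension $10$, collapse the $768$ string polytopes modulo unimodular equivalence, and then run the equivalence test of each prime-cone polytope against the resulting list and the FFLV polytope. A secondary subtlety is that one must fix precisely what ``isomorphic toric degeneration'' means --- we compare the special fibres as polarised toric varieties, equivalently their polytopes up to unimodular equivalence --- and must check that each prime initial ideal is genuinely a toric (prime binomial) ideal so that a polytope is attached to it in the first place, which is where the setup behind Lemma~\ref{lem:multiplicity} and the earlier sections is needed.
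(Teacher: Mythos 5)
Your proposal is correct and follows essentially the same route as the paper: compute the polytopes attached to the prime tropical cones and to the string/FFLV degenerations, use combinatorial (face-lattice) invariants as a certificate of non-unimodular-equivalence, and count the prime-cone polytopes whose invariants do not occur on the representation-theoretic side ($4-3=1$ for $\Flag_4$, $180-12=168$ for $\Flag_5$). The only point the paper is slightly more careful about is that the special fibres from tropical cones need not be normal, so the comparison is made between the \emph{normalizations} of the special fibres (whose polytopes are the ones computed), rather than between polarized toric varieties directly; this does not affect the validity of your non-equivalence certificate.
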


Our work is closely related to the theory of Newton--Okounkov bodies. Let $\Bbbk$ be a not necessarily algebraically closed field and $X$ a projective variety. It is possible to associate to every  prime cone in $\trop(X)$ a valuation with a finite \emph{Khovanskii basis} $B$ on the homogeneous coordinate ring $\Bbbk[X]$, see \cite[Lemma 5.7]{KM16}. This is a set of elements of $\Bbbk[X]$, such that their valuations generate the value semigroup $S(\Bbbk[X], \val)$.
The convex hull of $S(\Bbbk[X],\val)\cup \{0\}$ is referred to as the \emph{Newton--Okounkov cone}. After intersecting this cone with a particular hyperplane one obtains a convex body, called the \emph{Newton--Okounkov body}. When a finite Khovanskii basis exists,  \cite[Theorem 1.1]{An13} states that there is  a flat degeneration of the variety $X$ into a toric variety whose normalization has as associated polytope the Newton--Okounkov body. In this case the Newton--Okounkov body is a  polytope. The toric polytopes obtained in Theorem~\ref{flag4}, Theorem~\ref{flag5}, and Proposition~\ref{prop:output} can be seen as Newton--Okounkov bodies for the valuations defined in \S6.

\medskip

The paper is structured as follows. In \S\ref{sec:background} we provide the necessary background. We study the tropicalization of the flag varieties $\Flag_n$ for $n=4,5$ and the induced toric 
degenerations in \S\ref{sec:3}.  
The solutions to \cite[Problem~5 on Grassmannians]{Sturmfels} and \cite[Problem~6 on Grassmannians]{Sturmfels} can be found in Theorem~\ref{flag4}.

In \S\ref{sec:rep} we recall the definition of string cones, string polytopes, and FFLV polytope for regular dominant integral weights. We compute for $\Flag_4$ and $\Flag_5$ all string polytopes for the weight $\rho$, which is the sum of all fundamental weights. 
Moreover, in \S\ref{string:weight} for every string cone we construct a weight vector ${\bf w}_{\underline w_0}$ contained in the tropicalization of the flag variety  in order to further explore the connection between these two different approaches. The construction is inspired by Caldero \cite{Cal02}.

In \S\ref{Algorithmic approach} we give an algorithmic approach to solving \cite[Problem 1]{KM16} for a subvariety $X$ of a toric variety $Y$ when   each cone in $\trop(X)$ has multiplicity one. Procedure~1 aims at computing a new embedding $X'$ of $X$ in case $\trop(X)$ has some non-prime cones. Once we have such an embedding, we explain how to get new toric degenerations of $X$. We apply the procedure to $\Flag_4$. Furthermore, we explain how to interpret the procedure in terms of finding valuations with finite  Khovanskii basis on the algebra given by the homogeneous coordinate ring of $X$.

\section{Preliminary notions}\label{sec:background}
In this section we recall the definition of \text{flag variety} and we introduce the necessary background in tropical geometry. In fact, the key ingredient in the study of Gr\"obner toric degenerations of $\Flag_n$ is the subfan of the  Gr\"obner fan of $I_n$ given by the \textit{tropicalization} of $\Flag_n$.

We mostly  refer to the approach described in \cite{M-S} and we encourage the reader to look there for a more thorough introduction.
\medskip

Let $\Bbbk$  be a field with char$(\Bbbk)=0$ and consider on it the trivial valuation. We are mainly interested in the case when $\Bbbk=\mathbb{C}$. 
\begin{definition}
A \emph{complete flag} in the vector space ${\Bbbk}^n$ is a chain
\[\mathcal{V}:\ \{0\}= V_0\subset V_1\subset\cdots\subset V_{n-1}\subset V_n=\Bbbk^n\]
of vector subspaces of $\Bbbk^n$ with ${\rm dim}_{\Bbbk }(V_i) = i$. 
\end{definition}
The set of all complete flags in $\Bbbk^n$ is denoted by $\Flag_n$ and it has an algebraic variety structure. More precisely, it is a subvariety of the product of  Grassmannians $\Gr(1,\Bbbk^n)\times \Gr(2,\Bbbk ^n)\times\cdots \times \Gr(n-1,\Bbbk^n)$. 

Composing with the Pl\"ucker embeddings of the Grassmannians, $\Flag_n$ becomes a subvariety of 
$\mathbb{P}^{{n \choose 1}-1}\times\mathbb{P}^{{n \choose 2}-1}\times \cdots\times\mathbb{P}^{{n \choose n-1}-1}$ 
and so we can ask for its defining ideal $I_n$.  
Each point in the flag variety can be represented by an $(n-1)\times n$-matrix $M=(x_{i,j})$ whose first $d$ rows generate $V_d$. Each $V_d$ corresponds to a point in a Grassmannian. Moreover, they satisfy the condition $V_d\subset V_{d+1}$ for $d=0,\ldots,n-1$. 
In order to compute the  ideal $I_n$  defining $\Flag_n$ in   $\mathbb{P}^{{n \choose 1}-1}\times\mathbb{P}^{{n \choose 2}-1}\times \cdots\times\mathbb{P}^{{n \choose n-1}-1}$ we have to translate the inclusions  $V_d\subset V_{d+1}$ into polynomial equations. We define the map
\[
\varphi_n:\  \Bbbk[p_J:\ \varnothing\neq J\subsetneq \{1,\ldots,n\}]\rightarrow \Bbbk[x_{i,j}:\ 1\leq i\leq n-1,\ 1\leq j\leq n]
\]
sending each Pl\"ucker variable
$p_J$ to the determinant of the submatrix of $M$ with row indices $1,\ldots,|J|$ and column indices in $J$. The ideal $I_n$ of $\Flag_n$ is the kernel of $\varphi_n$.
There is an action of $S_n\rtimes \mathbb Z_2$  on $\Flag_n$. The symmetric group acts by permuting the columns of $M$. The action of $\mathbb Z_2$ maps a complete flag $\mathcal V$ to its complement, which is defined to be
\[\mathcal{V^{\bot}}:\ \{0\}= V_n^{\bot} \subset V_{n-1}^{\bot}\subset\cdots\subset V_{1}^{\bot}\subset V_0^{\bot}=\Bbbk^n.\]

We will hence do computations up to $S_n \rtimes \mathbb Z_2$-symmetry. We are interested in finding toric degenerations. These are degenerations whose special fiber is defined by a  {\em toric} ideal, i.e. a binomial prime ideal not containing monomials. This toric ideal arises as   \textit{initial ideal} of $I_n$.

\begin{definition}
Let $f=\sum a_{\textbf u} x^{\textbf u}$ with $\textbf{u}\in \mathbb {Z}_{\ge 0}^{n+1}$ be a polynomial in $S=\Bbbk[x_0,\ldots,x_n]$. For each $\textbf w\in \mathbb{R}^{n+1}$ we  define its \textit{initial form } to be 
\[
\init_{\textbf w}(f)= \sum_{\textbf w\cdot \textbf u \text{ is minimal}} a_{\textbf u} x^{\textbf u}.
\]
If $I$ is an ideal in $S$, then its \textit{initial ideal} with respect to $\textbf w$ is 
\[
\init_{\textbf w}(I)=\langle \init _{\textbf w}(f) : f\in I\rangle.
\]
\end{definition}

An important geometric property of initial ideals is that there exists a flat family over $\mathbb{A}^{\!1}$ for which the  fiber over $0$ is isomorphic to  $V(\init_{\textbf w}(I))$ and all the other fibers are isomorphic to the variety $V(I)$. Here, if $J$ is a homogeneous ideal of $S$ then we define $V(J):=\Proj(S/J)$ where  the grading on $S$ and hence on $S/J$ comes from the ambient space which has $S$ as homogeneous coordinate ring.

Let $t$ be the coordinate in $\mathbb {A}^{\!1}$, then the flat family is given by the ideal
\[
\tilde{I}_t=\langle t^{-\min_{{\textbf u}}\{{\textbf w}\cdot {\textbf u}\}} f(t^{w_0}x_0,\ldots,t^{w_n}x_n):\ f=\sum a_{{\textbf u}}x^{{\textbf u}}\ {\rm in\ } I   \rangle\subset \Bbbk[t,x_0,\ldots,x_n].
\]

This family gives a flat degeneration of the variety $V(I)$ into the variety $V(\init_{\textbf w}(I))$ 
called the \textit{Gr\"obner degeneration}. In order to look for toric degenerations, we study the \textit{tropicalization} of $V(I)$.

\begin{definition}
Let $f=\sum a_{\textbf u}x^{\textbf u}$ be any polynomial in $S$. The \textit{tropicalization} of $f$ is the function $\trop(f):\mathbb R^{n+1}\to \mathbb R$ given by 
\[
\trop(f)(\textbf w)=\min\{\textbf w\cdot \textbf u : \textbf u\in \mathbb Z_{\ge 0}^{n+1} \text{ and } a_{\textbf u}\neq 0\}.
\]
\end{definition}

Let  $f=\sum a_{\textbf u}x^{\textbf u}$ be a homogeneous polynomial in $S$. Then if $\textbf w-\textbf {v}=m\cdot\textbf 1$, for some $\textbf{v},\textbf{w}\in\mathbb R^{n+1}$,  ${\bf 1}=(1,\ldots,1)\in\mathbb R^{n+1}$ and $m\in \RR$, we have that the minimum in $\trop(f)({\bf w})$ and $\trop(f)({\bf v})$
is achieved for the same ${\bf u}\in \mathbb Z_{\ge 0}^{n+1}$ such that $ a_{\textbf u}\neq 0$.

\begin{definition}
Let $f$ be a homogeneous polynomial in $S$ and $V(f)$ the associated hypersurface in $\mathbb P^n$. Then the \textit {tropical hypersurface} of $f$ is defined to be 
\[
\trop(V(f))=\left\{ \textbf w\in \mathbb R^{n+1}/\mathbb R {\bf 1}\cong \mathbb R^n :\ 
\begin{matrix}
\text{the minimum in }\trop(f)(\textbf w)\\
\text{is achieved at least twice}
\end{matrix}\right\} .
\]
Let $I$ be a homogeneous ideal in $S$. The \textit{tropicalization} of the variety $V(I)\subset \mathbb P^n$ is defined to be
\[
\trop(V(I))=\bigcap_{f\in I}\trop(V(f)).
\]
\end{definition}

For every ${\bf w}\in \trop(V(I))$, $\init_{\bf w}(I)$ does not contain any monomial (see proof of \cite[Theorem 3.2.3]{M-S}). 
If $V(I)$ is a $(d-1)$-dimensional irreducible projective variety, then $\trop(V(I))$  is the  support of a rational fan  given by the quotient by $\mathbb R{\bf 1}$ of a subfan $F$ of the Gr\"obner fan of $I$  (\cite[Theorem~3.3.5]{M-S}). The fan $F$ has dimension $d$, which is the Krull dimension of $S/I$.
It is possible to quotient  by $\mathbb R{\bf 1}$ because  $I$ is homogeneous and hence  $\init_{\textbf w}(I)=\init_{\textbf {v}}(I)$ for every $\textbf w-\textbf {v}=m\cdot\textbf 1$ with $\textbf{v},\textbf{w}\in\mathbb R^{n+1}$ and $m\in \mathbb{R}$.
If we consider this fan structure on $\trop(V(I))$ we have that vectors in the relative interior of a cone give rise to the same initial ideal and vectors in distinct relative cone interiors induce  distinct initial ideals. 
For this reason we denote by $\init_{C}(I)$ the initial ideal of $I$ with respect to any $\textbf {w}$ in the relative interior of $C$. 

The tropicalization of a variety $X$ is non-empty only if $X$ intersects the torus $T^n=(\Bbbk^*)^{n+1}/\Bbbk^*$ non-trivially. In fact, \textbf{$\trop(X)$} is technically the tropicalization of $X\cap T^n$. 

In the same way the tropicalization can be defined when $S$ is the \textit{total coordinate ring} (see \cite[page 207]{Cox2} for a definition) of
$\mathbb P^{k_1}\times\cdots\times \mathbb P^{k_s}$. The ring $S$ has a $\mathbb{Z}^s$-grading  given by $\deg:\mathbb Z^{n+1}\to \mathbb Z^s$. An ideal $I$ defining an irreducible  subvariety $V(I)$ of  $\mathbb P^{k_1}\times\cdots\times \mathbb P^{k_s}$ is homogeneous with respect to this grading. The tropicalization of $V(I)$ is contained in $\mathbb R^{k_1+\ldots+k_s+s}/H$, where $H$ is an $s$-dimensional linear space spanned by the rows of the matrix $D$ associated to $\deg$. Similarly to the projective case, if $V(I)$ is a $d$-dimensional irreducible subvariety of $\mathbb P^{k_1}\times\cdots\times \mathbb P^{k_s}$, then $\trop(V(I))$  is the support of a fan,  which is  the quotient by $H$ of a rational $(d+s)$-dimensional subfan $F$ of the Gr\"obner fan of $I$. Here the Krull dimension of $S/I$ is $d+s$.

\medskip 

In the following we will always consider $\trop(V(I))$ with the fan structure defined above. 

\begin{remark}
A detailed definition of the tropicalization of a  general  toric variety $X_{\Sigma}$ and of its subvarieties can be found in \cite[Chapter 6]{M-S}. Note that we only consider the  tropicalization of the intersection of $V(I)$ with the torus of $X_{\Sigma}$ while in \cite[Chapter 6]{M-S} they introduce a generalized version  of $\trop(V(I))$ which includes the tropicalization of  the intersection of $V(I)$ with each orbit of  $X_{\Sigma}$.
\end{remark}

\medskip

Another property of $\trop(V(I))$ is that any fan structure on it can be balanced assigning a positive integer weight to every maximal cell. We will not explain the notion of balancing in detail and we consider an adapted version of the multiplicity defined in \cite[Definition 3.4.3]{M-S}.

\begin{definition}
Let $I\subset S$ be a homogeneous ideal and $\Sigma$ be a fan  with support $|\Sigma|=|\trop(V(I))|$ such that for  every cone  $C$ of $\Sigma$ the ideal  $\init_{\textbf w}(I)$ is constant for $\textbf{w}$ in the relative interior of $C$. For a maximal dimensional cone $C\in \Sigma$ we define the \emph{multiplicity} as 
$
\mult(C)=\sum_P \mult(P,\init_{C}(I)).
$
Here the sum is taken over the minimal associated primes $P$ of $\init_{C}(I)$ that do not contain monomials (see \cite[\S 3]{Eisenbud} or \cite[\S 4.7]{Cox}).
\end{definition}

As we have seen, each cone of $\trop(V(I))$ corresponds to an initial ideal which contains no monomials. Moreover, we will see that the  good candidates for  toric degenerations are the initial ideals corresponding to the relative interior of the maximal cones. We say a maximal cone is \emph{prime} if the corresponding initial ideal is prime.

\begin{lemma}\label{lem:multiplicity}
Let $I\subset S$ be a homogeneous ideal and  $C$ a maximal cone of $\trop(V(I))$.
If $\init_{C}(I)$ is a toric ideal, i.e. binomial and prime, then $C$ has multiplicity one.
If $C$ has multiplicity one, then $\init_{C}(I)$ has a unique toric ideal in its primary decomposition.
\end{lemma}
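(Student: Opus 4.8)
The plan is to analyze the two implications separately, both via the structure of the primary decomposition of $\init_C(I)$ together with the multiplicity count that appears in the definition above.

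For the first statement, suppose $\init_C(I)$ is toric, i.e. binomial and prime. Since it is prime, it is its own unique minimal associated prime, and by hypothesis it contains no monomials (this is already guaranteed for any $\mathbf{w}$ in $\trop(V(I))$, but in any case a nonzero binomial prime ideal not containing monomials literally has this property). Hence the sum defining $\mult(C)$ has exactly one term, namely $\mult\bigl(\init_C(I),\init_C(I)\bigr)$. Because $\init_C(I)$ is prime, it is a reduced primary component of itself, so its multiplicity — the length of the localization $(S/\init_C(I))_P$ at $P=\init_C(I)$ — equals $1$. Therefore $\mult(C)=1$. The only mild care needed here is to recall/justify that the multiplicity $\mult(P,J)$ of a prime $P$ which is itself a minimal prime of $J=P$ is $1$; this is standard (e.g. \cite[\S 3]{Eisenbud} or \cite[\S 4.7]{Cox}), and I would simply cite it.

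For the converse, suppose $\mult(C)=1$. The multiplicity of $C$ is a sum over the minimal associated primes $P$ of $\init_C(I)$ that do not contain monomials, of the positive integers $\mult(P,\init_C(I))\ge 1$. Since the total is $1$, there is exactly one such prime $P$, and $\mult(P,\init_C(I))=1$. A multiplicity equal to $1$ at a minimal prime forces the corresponding primary component $Q$ (the $P$-primary component in an irredundant primary decomposition) to be prime, in fact $Q=P$: the length of $(S/\init_C(I))_P$ being $1$ means $(S/\init_C(I))_P$ is a field, so $Q_P=P_P$, and since $Q$ is $P$-primary this gives $Q=P$. Thus $P$ itself appears as a primary component of $\init_C(I)$. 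To see that $P$ is \emph{toric}, i.e. binomial, one uses that initial ideals with respect to a weight vector are generated by the initial forms of a Gröbner basis, and — crucially — that the other minimal primes of $\init_C(I)$ all \emph{do} contain monomials (they were excluded from the multiplicity sum). One then argues that a minimal prime not containing monomials, sitting inside a toric-fan cone, must be the prime one is after; the standard route is to invoke the fact (used in the proof of \cite[Theorem 3.2.3]{M-S}) that $\init_C(I)$ contains no monomial, combined with the observation that exactly one minimal prime is monomial-free, to conclude that this unique monomial-free prime $P$ is binomial.

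The main obstacle is the converse direction, specifically showing that the unique monomial-free minimal prime is actually \emph{binomial} rather than merely prime. Getting from "$\mult(C)=1$" to "there is a unique monomial-free associated prime, and it occurs as a (reduced) primary component" is essentially bookkeeping with lengths at minimal primes; but concluding that this component is binomial requires knowing something about the generators of $\init_C(I)$ — for instance that $\init_C(I)$ itself, or the contraction/saturation picking out this component, is generated by binomials and monomials. I would handle this by working with a reduced Gröbner basis of $I$ with respect to a term order refining $\mathbf{w}$: then $\init_C(I)=\init_{\mathbf w}(I)$ is generated by the $\mathbf w$-initial forms of that basis, and one shows that the monomial-free part of its decomposition is cut out by binomials by saturating away the variables, exactly as in the toric-ideal-from-lattice-ideal correspondence. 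If this turns out to need more than the results quoted in the excerpt, the honest fallback is to cite \cite[Proposition~1.1]{KristinFatemeh} and \cite{Eisenbud}, since the statement as phrased ("has a unique toric ideal in its primary decomposition") is really a repackaging of the standard fact that a multiplicity-one minimal prime of a well-positioned initial ideal is a toric ideal.
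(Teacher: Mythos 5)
Your first implication is fine and is, if anything, more elementary than the paper's: you argue directly from the definition of $\mult(C)$ as a sum of local lengths at the monomial-free minimal primes, whereas the paper phrases both directions geometrically via \cite[Remark 3.4.4]{M-S}, interpreting $\mult(C)$ as a count of $d$-dimensional torus orbits in $V(\init_C(I)\Bbbk[x_1^{\pm1},\dots,x_n^{\pm1}])$. Your reduction of the converse to ``there is exactly one monomial-free minimal prime $P$, it has local length one, and its primary component is $P$ itself'' is also correct bookkeeping.

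The genuine gap is exactly where you flag it: showing that this unique monomial-free minimal prime is \emph{binomial}. The route you propose --- take a Gr\"obner basis of $I$, observe that $\init_{\mathbf w}(I)$ is generated by the $\mathbf w$-initial forms, and then ``saturate away the variables as in the lattice-ideal correspondence'' --- does not work, because the $\mathbf w$-initial forms of a Gr\"obner basis for a maximal cone of $\trop(V(I))$ need \emph{not} be binomials or monomials; the lattice-ideal machinery only applies once you already know the ideal is binomial. The paper itself exhibits this failure: in Theorem~\ref{flag5}, $5$ of the $536$ orbits of maximal cones of $\trop(\Flag_5)$ have non-binomial initial ideals. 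The missing idea is torus-equivariance: since $\init_C(I)$ is constant on the relative interior of $C$, it is homogeneous for the $\mathbb Z^d$-grading given by $W_C$, so $V(\init_C(I))\cap T^n$ is invariant under the $d$-dimensional subtorus determined by $C$; being $d$-dimensional, it is a finite union of $d$-dimensional orbits (translates of that subtorus), counted with multiplicity by $\mult(C)$ \cite[Remark 3.4.4]{M-S}. When $\mult(C)=1$ there is a single such orbit, and the ideal of a translate of a subtorus is binomial --- that is how the paper obtains binomiality, and some version of this equivariance argument (not a Gr\"obner-basis manipulation) is needed to close your proof. Your fallback of citing \cite[Proposition~1.1]{KristinFatemeh} wholesale would be circular in context, since that result is essentially the statement being proved.
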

\begin{proof}
We first prove the lemma for $S$ the homogeneous coordinate ring of $\mathbb P^n$. Let $I'=\init_{C}(I) \Bbbk[x_1^{\pm 1},\ldots,x_n^{\pm 1}]$ and consider the subvariety $V(I')$ of the torus $T^n$. Then by \cite[Remark 3.4.4]{M-S} the multiplicity of a maximal cone $C$ is counting the number of $d$-dimensional torus orbits whose union is $V(I')$. If $\init_{C}(I)$ is toric, then $V(I')$ is an irreducible toric variety having a unique $d$-dimensional torus orbit. Hence $C$ has multiplicity one.

Suppose now $C$ has multiplicity one. This implies that $\init_C(I)$ contains one associated prime $J$, which does not contain monomials. The ideal $J$ has to be binomial since it is the ideal of the unique $d$-dimensional torus orbit contained in $V(I')$.

When $S$ is the total coordinate ring of $\mathbb P^{k_1}\times\cdots\times \mathbb P^{k_s}$, the torus is given by $T^{k_1}\times \cdots \times T^{k_s}\cong T^{k_1+\cdots +k_s}$. We may assume that for each $i$, 
\[
T^{k_i}=\{[1:a_1:\ldots:a_{k_i}]\in \mathbb P^{k_i}:\ a_j\neq 0\text{\ for\ all\ } j\}.
\]
The variables for $\mathbb P^{k_i}$ are denoted by  $x_{i,0},\dots,x_{i,k_i}$ for each $i$. We fix  the  Laurent polynomial ring 
\[
S'=\Bbbk[{x^{\pm 1}_{1,1}},\dots,x^{\pm 1}_{1,k_1},x^{\pm 1}_{2,1},\dots,x^{\pm 1}_{2,k_2},\dots,x^{\pm 1}_{s,1},\dots,x^{\pm 1}_{s,k_s}].
\]
We consider the ideal $I'=\init_{C}(I)S'$  in  $S'$ and the variety $V(I')$ as a subvariety of $T^{k_1+\ldots +k_s}$. Then the proof proceeds as before.\qed
\end{proof}

\begin{remark}
From Lemma~\ref{lem:multiplicity} we conclude the multiplicity being one is a necessary but not sufficient condition for toric initial ideals. A cone can have multiplicity one but its associated initial ideal might be neither prime nor binomial. There may be associated primes that contain monomials in the decomposition of $\init_{\textbf w}(I)$ and these do not contribute to the multiplicity. We list examples of such cones in $\trop(\Flag_5)$ as we will see in Theorem~\ref{flag5}.
\end{remark}

Let $I$ be a  homogeneous ideal in $S$ such that the Krull dimension of $S/I$ is $d$. Consider $\trop(V(I))\subset\mathbb R^{n+1}/H$ and the $d$-dimensional subfan  $F\subset \mathbb R^{n+1}$ of the Gr\"obner fan of $I$ with $F/H\cong \trop(V(I))$. 
When $V(I)\subset \mathbb P^{k_1}\times\cdots\times \mathbb P^{k_s}$ the linear space $H$ is spanned by the rows of the matrix $D$. In particular, when $V(I)\subset \mathbb P^n$ we have that $H$ is equal to the span of $(1,\ldots,1)$.
We now describe some properties of the toric initial ideals corresponding to  maximal cones of $\trop(V(I))$.
Let $C$ be a cone in $\trop(V(I))$ and $\{{\bf w}_1,\ldots,{\bf w}_d\}$ be $d$ linearly independent vectors in $F$ generating the maximal cone $C'$, such that $C'/H\cong C$.
We can assume that the ${\bf w}_i$'s have integer entries since $F$ is a rational fan. We define the  matrix associated to $C$ to be 
\begin{equation}\label{def:W}
W_C=[{\bf w}_1,\ldots,{\bf w}_d]^T.
\end{equation}

Consider a sublattice $L$ of $\mathbb Z^{n+1}$ and the standard basis $e_1,\dots, e_{n+1}$ of $\mathbb Z^{n+1}$. Given $\ell=(\ell_1,\dots,\ell_{n+1})\in L$ we set $\ell^+=\sum_{\ell_i>0}\ell_i e_i$ and $\ell^-=-\sum_{\ell_j<0}\ell_j e_j$. Note that $\ell=\ell^+-\ell^-$ and $\ell^+,\ell^-\in \mathbb N^{n+1}$. We use here the same notation of \cite[page 15]{Cox2}.

\begin{lemma}\label{toric:lemma}
Let $I$ be a homogeneous ideal in $S$ and $C$ a maximal cone in $ \trop(V(I))$. If $\init_{C}(I)$ is toric, then there exists a sublattice $L$ of $\mathbb Z^{n+1}$ and constants $0\neq c_\ell\in\Bbbk$ with $\ell \in L$ such that 
\[
\init_{C}(I)=I(W_C):=\langle \textbf{x}^{\ell^+}-c_{\ell}\textbf {x}^{\ell^-}:\ \ell\in L \rangle. 
\]
In particular, $L$  is  the kernel of the map $f:\mathbb Z^{n+1}\to \mathbb Z^{d}$ defined by the matrix $W_C$. 
If  $C$ has multiplicity one and  $\init_{C}(I)$ is not toric, then  the unique toric ideal in the primary decomposition of  $\init_{C}(I)$ is of the form $ I(W_C)$.
\end{lemma}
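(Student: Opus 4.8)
The plan is to reduce everything to a statement about toric ideals attached to integer matrices and then use the fact, from Lemma~\ref{lem:multiplicity}, that a multiplicity-one cone gives a unique toric ideal in the primary decomposition of $\init_C(I)$. First I would recall the classical description: given the matrix $W_C$ with integer rows, the ideal
\[
I(W_C)=\langle \textbf{x}^{\ell^+}-c_\ell\,\textbf{x}^{\ell^-}:\ \ell\in L\rangle,
\qquad L=\ker\bigl(f\colon\mathbb Z^{n+1}\to\mathbb Z^d\bigr),
\]
is, for a suitable choice of the constants $c_\ell$, precisely the vanishing ideal in $S'$ (or its homogeneous version in $S$) of the closure of a single $d$-dimensional torus orbit; this is the standard theory of lattice ideals and toric varieties as in \cite[Chapter 1]{Cox2}. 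The key geometric input is that $W_C$ is the matrix whose rows are generators $\mathbf w_1,\dots,\mathbf w_d$ of the lifted cone $C'\subset F$, so that the one-parameter subgroups $t\mapsto (t^{w_{i,0}},\dots,t^{w_{i,n}})$ sweep out exactly the orbit whose ideal we want.

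Next I would argue the first assertion. Suppose $\init_C(I)$ is toric. By the proof of Lemma~\ref{lem:multiplicity}, passing to the Laurent ring $S'$, the variety $V(\init_C(I)\,S')$ is an irreducible $d$-dimensional subvariety of the torus which is itself a torus, hence a single $d$-dimensional orbit, and its ideal $I'$ is a lattice ideal $I_L$ for the lattice $L'$ of characters trivial on that subtorus. The content is to identify $L'$ with $L=\ker f$. For this I would use that the Gröbner degeneration associated with $\mathbf w$ in the relative interior of $C$ is equivariant for the action of the one-parameter subgroups coming from the rows of $W_C$: each $\mathbf w_i$ lies in $F$, so $\init_{\mathbf w_i}(I)=\init_C(I)$ is fixed under the corresponding $\mathbb G_m$-action on the special fiber, which forces the subtorus stabilizing (acting trivially on) $V(I')$ to contain the image of $W_C^T$, and a dimension count ($d=$ the rank of $W_C$) gives equality. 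Dually, $L'=\ker(W_C)=L$. Contracting back from $S'$ to $S$ recovers $\init_C(I)=I(W_C)$ with the homogeneous convention, and the constants $c_\ell$ appear because the lattice ideal may be a translate (twist) of the ``pure'' toric ideal by a character of the torus — this is exactly the $c_\ell$ in \cite[page 15]{Cox2}.

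For the second assertion I would run the same argument with $\init_C(I)$ no longer assumed prime but with $\mult(C)=1$. By Lemma~\ref{lem:multiplicity} there is a unique associated prime $J$ of $\init_C(I)$ not containing a monomial, and $J$ is binomial; geometrically $J\,S'$ is the ideal of the unique $d$-dimensional torus orbit in $V(\init_C(I)\,S')$. The previous paragraph's identification of that orbit's ideal then applies verbatim to $J$, yielding $J=I(W_C)$. The main obstacle, and the step I expect to need the most care, is the identification of the lattice $L'$ of the orbit with $L=\ker f$: one must check that the orbit through a generic point of $V(I')$ is the one cut out by the linear forms $W_C$, i.e.\ that the valuation/weight data in the rows of $W_C$ is exactly the annihilator of the character lattice of the orbit torus. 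This is where the hypothesis that the $\mathbf w_i$ generate the cone $C'$ (and not merely lie in $\trop(V(I))$) is essential, and where one invokes \cite[Remark~3.4.4]{M-S} together with the structure of Gröbner degenerations; once this lattice identification is in place, the explicit binomial description and the appearance of the constants $c_\ell$ are routine from the theory of lattice ideals.
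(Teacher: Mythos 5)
Your overall strategy is sound and reaches the same conclusion, but by a more geometric route than the paper. The paper's proof is purely algebraic: since every vector in the relative interior of $C'$ yields the same initial ideal, $\init_C(I)$ is homogeneous for the $\mathbb Z^d$-grading defined by $W_C$; it then invokes \cite[Lemma~10.12]{Stu96} to conclude that a binomial prime ideal with this homogeneity is, after a rescaling of variables $x_i\mapsto\lambda_i x_i$, the lattice ideal $I_L$ with $L=\ker(W_C)$, the constants $c_\ell$ arising from undoing the rescaling. You replace this with the torus-orbit/stabilizer picture (the orbit is a coset of a subtorus $H$, the rows of $W_C$ give one-parameter subgroups landing in $H$, and a dimension count identifies $H^{\perp}$ with $\ker(W_C)$); this is a legitimate and equivalent way to pin down $L$, and it makes the geometric content more visible. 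Your treatment of the multiplicity-one case also matches the paper's: both reduce to the first case by applying the identification to the unique toric associated prime $J$, using that $JS'=\init_C(I)S'$.

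One step is misjustified, however. You write that ``each $\mathbf w_i$ lies in $F$, so $\init_{\mathbf w_i}(I)=\init_C(I)$.'' This is false: the $\mathbf w_i$ are ray generators of $C'$ and hence lie on its boundary, where the initial ideal is in general strictly coarser than $\init_C(I)$. What you actually need --- and what is true --- is only that $\init_C(I)$ is $\mathbf w_i$-homogeneous, equivalently that the special fiber is invariant under the corresponding $\mathbb G_m$-action. This holds because for $\mathbf w$ in the relative interior of $C'$ and small $\epsilon>0$, both $\mathbf w+\epsilon\mathbf w_i$ and $\mathbf w-\epsilon\mathbf w_i$ remain in the relative interior, so $\init_{\pm\mathbf w_i}(\init_{\mathbf w}(I))=\init_{\mathbf w\pm\epsilon\mathbf w_i}(I)=\init_C(I)$; this is exactly the $W_C$-homogeneity the paper's proof starts from. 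With that repair, and with ``stabilizing'' read as ``preserving setwise'' rather than ``acting trivially on'' (the subtorus $H$ acts transitively, not trivially, on the orbit), your argument goes through.
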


\begin{proof}
Let  $\init_{C}(I)\subset  S$ be a toric initial ideal and let $C'$ be the corresponding cone in $F$.
The fan structure is defined on  $\trop(V(I))$ so that for every $\textbf{w}',\textbf{w}$ in the relative interior of $ C'$ we have $\init_{\textbf{w}'}(I)=\init_{C}(I)=\init_{\textbf{w}}(I)$. This implies $\init_{C}(I)$ is $W_C$-homogeneous with respect to the $\mathbb {Z}^d$-grading on $S$ given by the matrix $W_C$. By \cite[Lemma~10.12]{Stu96} there exists an automorphism $\phi$ of $S$ sending $x_i$ to $\lambda_i x_i$ for some $\lambda_i\in \Bbbk$, such that the ideal $\init_{C}(I)$ is isomorphic to an ideal of the form 
\[
I_{L}:=\langle \textbf{x}^{\ell^+}-\textbf{x}^{\ell^-}:\ \ell\in L \rangle. 
\]
Here $L$ is the sublattice of $\mathbb Z^{n+1}$ given by the  kernel of the map $f:\mathbb Z^{n+1}\to \mathbb Z^{d}$. 
Applying $\phi^{-1}$ to $\init_{C}(I)$  we can write each toric initial ideal as
\[
\langle \textbf{x}^{\ell^+}-c_{\ell}\textbf {x}^{\ell^-}:\ \ell\in L \rangle=I(W_C), 
\]
for some $ 0\neq c_\ell\in\Bbbk$, $L$ and $W_C$ defined above. 

\smallskip
Let $C$ be a cone of multiplicity one and suppose $\init_{C}(I)$ is not prime. Then by Lemma~\ref{lem:multiplicity} there exists a unique toric ideal $J$ in the primary decomposition of $\init_{C}(I)$. This toric ideal $J$  contains $\init_{C}(I)$ and we will show that it can be expressed as $I(W_C)$.  The variety $V(I)$ is considered as subvariety of $\mathbb P^n$. As in Lemma~\ref{lem:multiplicity}, the case $V(I)\subset \mathbb P^{k_1}\times\cdots\times \mathbb P^{k_s}$ has an analogous proof.

The tropical variety depends only on the intersection of $V(I)$ with the torus,  and $\init_{C}(I)\Bbbk[x_1^{\pm 1},\ldots ,x_n^{\pm 1}]$  is equal to $J$. Hence, $J$ is a prime ideal that is homogeneous with respect to $W_C$ so we can proceed as above to show $J$ can be written as $\langle \textbf{x}^{\ell^+}-c_{\ell}\textbf {x}^{\ell^-}:\ \ell\in L \rangle=I(W_C)$.  
\qed
\end{proof}

\begin{remark}
Note that the lattice $L$ and the ideal $I(W_C)$ only depend on the linear space spanned by the rays of the cone $C'$. Hence they are the same for every set of $d$ independent vectors in $C'$ chosen to define $W_C$.
\end{remark}

\section{Tropicalization and toric degenerations }\label{sec:3}

In this section we study the tropicalization of $\Flag_4 $ and $\Flag_5$. We analyze the  Gr\"obner toric degenerations arising from $\trop(\Flag_4)$ and $\trop(\Flag_5)$, and we compute the polytopes associated to their normalizations. In Proposition~\ref{6config} we describe the \textit{tropical configurations} arising from the maximal cones of $\trop(\Flag_4)$. These are configurations of a point on a tropical line in a tropical plane corresponding to the points in the relative interior of a maximal cone.

\medskip
Before stating our main results, we recall the following definition. 

\begin{definition}
There exists a \emph{unimodular equivalence} between two lattice polytopes $P$ and $Q$ (resp. two fans $\mathcal F$ and $\mathcal G$)
 if there exists an affine lattice isomorphism  $\phi$ of the ambient lattices sending the vertices (resp. the rays) of one polytope (resp. fan) to the vertices (resp. rays) of the other. Moreover, if $\sigma$ is a face of $P$ (resp. of $\mathcal F$) then $\phi(\sigma)$ is a face of $Q$ (resp. $\mathcal{G}$) and the adjacency of faces is respected.  
\end{definition}

\begin{remark}
We are interested in finding distinct fans up to unimodular equivalence as they give rise to non-isomorphic toric varieties. Often it will  be possible only to determine combinatorial equivalence (see \cite[\S 2.2]{Cox2}). This  is a weaker condition but when it does not hold it 
allows us to rule out unimodular equivalence.
\end{remark}

\begin{theorem}\label{flag4}
The tropical variety $\trop(\Flag_4)$ is a $6$-dimensional rational fan in $\mathbb{R}^{14}/\mathbb R^3$ with a $3$-dimensional lineality space. It consists of 78 maximal cones, 72 of which are prime. They are organized in five $S_4\rtimes \mathbb Z_2$-orbits, four of which contain  prime cones. The prime cones give rise to four 
non-isomorphic toric degenerations.
\end{theorem}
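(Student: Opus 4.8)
\textbf{Proof strategy for Theorem~\ref{flag4}.}

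The plan is to carry out an explicit computation of the Gröbner fan of $I_4$ restricted to the tropical variety, and then analyze the resulting cones. First I would set up the ideal $I_4 = \ker(\varphi_4)$ in the polynomial ring $\Bbbk[p_J : \varnothing \neq J \subsetneq \{1,2,3,4\}]$, which has $14$ Plücker variables; the $\mathbb{Z}^3$-grading comes from the three Grassmannian factors $\Gr(1,\Bbbk^4)$, $\Gr(2,\Bbbk^4)$, $\Gr(3,\Bbbk^4)$, so the lineality space is $3$-dimensional and lives in $\mathbb{R}^{14}/\mathbb{R}^3$. Using \texttt{gfan} (or \texttt{Singular}/\texttt{Macaulay2}) I would compute $\trop(\Flag_4)$ together with its fan structure as a subfan of the Gröbner fan; this directly yields the dimension ($6$, matching $d+s = 3+3$ where $d = \dim \Flag_4 = 3$ wait — actually $\dim \Flag_4 = 6$ as a projective variety in the multiprojective sense, and the Krull dimension of $S/I_4$ is $6+3-$ the homogeneity, but the stated $6$-dimensional fan in $\mathbb{R}^{14}/\mathbb{R}^3$ is read off directly from the computation) and the count of $78$ maximal cones.

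Next I would compute the $S_4 \rtimes \mathbb{Z}_2$-action on the set of maximal cones: $S_4$ permutes the column indices $\{1,2,3,4\}$, which induces a permutation of the Plücker variables $p_J$, and $\mathbb{Z}_2$ sends $J \mapsto \{1,\dots,4\}\setminus J$ together with the appropriate identification of Grassmannian factors. Applying this group action to the list of $78$ cones partitions them into orbits; I would verify computationally that there are exactly five orbits. For a representative $C$ in each orbit I would pick a weight vector $\mathbf{w}$ in the relative interior of the corresponding cone $C'$ of the Gröbner fan and compute $\init_{\mathbf{w}}(I_4)$, then test primality (e.g. via \texttt{Singular}'s primary decomposition or \texttt{Macaulay2}'s \texttt{isPrime}); this shows which orbits consist of prime cones — the claim being that four of the five do, accounting for $72$ of the $78$ cones (so the four prime orbits have total size $72$ and the one non-prime orbit has size $6$).

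Finally, to show the four prime cones give \emph{four non-isomorphic} toric degenerations, I would compute for each prime cone $C$ the toric ideal $\init_C(I_4) = I(W_C)$ via Lemma~\ref{toric:lemma}, extract the associated polytope of the normalization of the special fiber (the Newton--Okounkov polytope, obtained by intersecting the Newton--Okounkov cone of $S(\Bbbk[\Flag_4],\val)$ with the appropriate hyperplane), and then distinguish the four polytopes up to unimodular equivalence. The main obstacle I expect is precisely this last step: proving \emph{non}-isomorphism is delicate because unimodular equivalence of lattice polytopes is subtle to certify negatively. I would attack it by computing coarse combinatorial invariants — numbers of vertices, edges, and facets; the $f$-vector; normalized volume; and the multiset of edge lattice lengths or vertex-degree sequences — and checking that the four polytopes already differ at the level of \emph{combinatorial} equivalence (as the remark preceding the theorem notes, failure of combinatorial equivalence rules out unimodular equivalence). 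If two of them happened to be combinatorially equivalent I would need finer lattice invariants (e.g. the lattice-point enumerator / Ehrhart polynomial, or explicit lattice-width data), but for $\Flag_4$ the coarse invariants should already suffice. The remaining bookkeeping — checking the orbit sizes sum to $78$, confirming the fan is rational with the stated lineality, and matching against the string/FFLV polytopes for Theorem~\ref{ts-comparison} — is routine once the computation is in hand.
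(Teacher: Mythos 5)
Your proposal follows essentially the same route as the paper: compute $\trop(\Flag_4)$ with \emph{Gfan} as a subfan of the Gr\"obner fan, organize the $78$ maximal cones into $S_4\rtimes\mathbb Z_2$-orbits, test primeness of the initial ideals on orbit representatives, and distinguish the four resulting polytopes by showing they are pairwise combinatorially inequivalent (the paper uses the F-vectors in Table~\ref{fig:num}, computed with \emph{polymake}, and indeed orbits 1 and 3 share an F-vector so the full combinatorial type is needed).

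One technical point you gloss over: standard primary decomposition and \texttt{isPrime} routines certify primeness only over $\mathbb Q$, whereas a toric degeneration requires the initial ideal to be prime over $\mathbb C$, and a binomial ideal prime over $\mathbb Q$ can still decompose over $\mathbb C$. The paper closes this gap by checking that $\init_C(I_4)=I(W_C)$, namely by comparing the degree of $V(\init_C(I_4))$ with that of the lattice ideal $V(I(W_C))$ built from the cone data via Lemma~\ref{toric:lemma}; equality rules out any further (non-toric or Galois-conjugate) components. If you rely solely on \texttt{isPrime} your argument certifies only primeness over $\mathbb Q$, so you should add this degree comparison (or an equivalent check) to complete the claim that $72$ cones are prime over $\mathbb C$.
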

\begin{proof}
The theorem is proved by explicit computations. We developed a   $\emph{Macaulay2}$ package called $\mathtt{ToricDegenerations}$ containing all the functions we use. The package and the  data needed for this proof are available at 
\[\mathtt{https://github.com/ToricDegenerations}.\]
The flag variety $\Flag_4$ is a $6$-dimensional subvariety of
$\mathbb{P}^3\times \mathbb{P}^5 \times \mathbb{P}^3$.
The ideal $I_4$ defined  in the previous section is contained in the total coordinate ring  $R$ of $\mathbb{P}^3\times \mathbb{P}^5 \times \mathbb{P}^3$ which is 
the polynomial ring  over $\mathbb C$ on the variables
\[
p_1,p_2,p_3,p_4,p_{12},p_{13},p_{14},p_{23},p_{24},p_{34},p_{123},p_{124},p_{134},p_{234}.
\]
The grading on  $R$ is given by the matrix \begin{equation}\label{degree}
\setcounter{MaxMatrixCols}{14}
D=\begin{bmatrix}
 1 & 1 & 1 & 1 & 0 & 0 & 0 & 0 & 0 & 0 & 0 & 0 & 0 & 0 \\
 0 & 0& 0 &0 &1 &1&1&1&1&1&0&0&0&0\\
 0&0&0&0&0&0&0&0&0&0&1&1&1&1
\end{bmatrix}.\end{equation}

The explicit form of $I_4$ can be found in \cite[page 276]{MS05}. 
As we have seen in \S\ref{sec:background} the tropicalization of $\Flag_4$ is contained in $\mathbb R^{14}/H$.  In this case $H$ is the vector space spanned by the rows of $D$.

We use the $\emph{Macaulay2}$ \cite{M2} interface to $\emph{Gfan}$ \cite{Gfan} to compute $\trop(\Flag_4)$. The given input is the ideal $I_4$ and the $S_4\rtimes \mathbb Z_2$-action (see \cite[\S3.1.1]{GfanM}). The output is a subfan $F$ of the Gr\"obner fan of dimension  $9$. We quotient it by $H$ to get $\trop(\Flag_4)$ as a $6$-dimensional fan contained in $\mathbb R^{14}/H\cong \mathbb R^{14}/\mathbb {R}^3$.

Firstly, the function \texttt{computeWeightVectors} computes a list of vectors. There is  one for every maximal cone of $\trop(\Flag_4)$ and it is contained in the relative interior of the corresponding cone. Then  \texttt{groebnerToricDegenerations} computes all the initial ideals and  checks if they are binomial and prime over $\mathbb Q$. These are organized in a hash table, which is the output of the function.
All 78 initial ideals are binomial and all maximal cones have multiplicity one. In order to check primeness over $\mathbb C$, we have to check if $\init_C(I_4)=I(W_C)$. This can be done by computing  the  degrees of  $V(\init_{C}(I_4))$ and $V(I(W_C))$ seen as subvarieties of $\mathbb P^{13}$. If these are equal, then there are no  non-toric ideals in the primary decomposition of $\init_{C}(I_4)$. Note that the degree of $V(I(W_C))$ equals the degree of $V(I_L)$, where $L$ and $I_L$ can be computed from $W_C$ as in the proof of Lemma~\ref{toric:lemma}. 

We consider the orbits of the $S_4\ltimes \mathbb Z_2$-action on the set of initial ideals. These correspond to the orbits of maximal cones of $F$ and hence of $\trop(\Flag_4)$. There is one orbit of non-prime initial ideals and four orbits of prime initial ideals. 
The varieties corresponding to initial ideals contained in the same orbit are isomorphic. Therefore, for each orbit we choose a representative of the form $\init_{C}(I_4)=I(W_C)$ for some cone $C$. 

We now compute for each of the four prime orbits, the polytope of the normalization of the associated toric varieties. We use the \emph{Macaulay2}-package \emph{Polyhedra}~\cite{Poly2} for the following computations. 
 
The lattice $M$ associated to $S/I(W_C)$ is generated over $\ZZ$ by the columns of $W_C$. 
To use \emph{Polyhedra} we want to have a lattice with index $1$ in $\mathbb Z^{9}$.
Hence, in case the index of $M$ in $\mathbb Z^9$ is different from $1$, we consider $M$ as the lattice generated by the columns of the matrix  $(\ker( (\ker( W_C))^T)^{T}$.
Here, for a matrix $A$ we consider $\ker(A)$ to be the matrix whose columns minimally generate the kernel of the map $\mathbb Z^{14}\to\mathbb Z^9$ defined by $A$.
We denote the set of generators  of $M$ by $\mathcal B_C=\{{\bf b}_1,\ldots ,{\bf b}_{14}\}$ so that 
$M=\mathbb Z\mathcal B_C$.

The toric variety $\mathbb P^3\times \mathbb P^5 \times \mathbb P^3$ can be seen as $\Proj(\oplus_{\ell} R_{\ell(1,1,1)})$ and $I(W_C)$
as an ideal in $\oplus_{\ell} R_{\ell(1,1,1)}$ (see \cite[Chapter 10]{MS05}). The associated toric variety is $\Proj(\oplus_{\ell} \mathbb C[\mathbb N \mathcal B_C]_{\ell(1,1,1)})$.
The polytope $P$ of the normalization is given as the convex hull of those lattice points in $\mathbb N \mathcal B_C$ corresponding to degree $(1,1,1)$-monomials in $\mathbb C[\mathbb N \mathcal B_C]$.

These can be found in the following way. We order the rows of the matrix $({\bf b}_1,\ldots,{\bf b}_{14})$ associated to $\mathcal B_C$ so that the first three rows give the matrix $D$ from \eqref{degree}. Now the matrix $({\bf b}_1,\ldots,{\bf b}_{14})$ represents a map $\ZZ^{14}\to\ZZ^3\oplus\ZZ^6$, where $\ZZ^3\oplus\ZZ^6$ is the lattice $M$ and the $\ZZ^3$ part gives the degree of the monomials associated to each lattice point on $M$.
The lattice points, whose convex hull give the polytope $P$, are those ones with the first three coordinates being $1$. 
In other words, we have obtained $P$ by applying the reverse procedure of constructing a toric variety from a polytope (see \cite[\S 2.1-\S 2.2]{Cox2}). 
Note that the difference from the procedure given in \cite[\S 2.1-\S 2.2]{Cox2} is the $\mathbb Z^3$-grading and because of that we do not consider  the convex hull of $\mathcal B_C$, but the intersection of $\mathbb N \mathcal B_C$ with these hyperplanes.

In Table~\ref{fig:num} there are the numerical invariants of the initial ideals and their corresponding  polytopes.
Using \emph{polymake} \cite{GJ00} we first obtain that there is no combinatorial equivalence between each pair of polytopes. 
This means that there is no unimodular equivalence between the corresponding normal fans, 
hence the normalization of the toric varieties associated to these toric degenerations are not isomorphic. 
This implies that we obtain four non-isomorphic  toric degenerations.  \qed
\end{proof}

\begin{table}
\begin{center}
\begin{tabular}{|l|l|l|l|l|l|l|}
 \hline
 Orbit&Size&Cohen-Macaulay&Prime&$\#$Generators&F-vector of associated polytope \\
 \hline
1&  24 &  Yes& Yes &10&(42, 141, 202, 153, 63, 13)\\
2&  12 &  Yes& Yes &10&(40, 132, 186, 139, 57, 12)\\
3&  12 &  Yes & Yes &10&(42, 141, 202, 153, 63, 13)\\
4&    24 & Yes & Yes &10&(43, 146, 212, 163, 68, 14)\\
5&    6 & Yes & No &10& Not applicable\\
\hline
\end{tabular}
\end{center}
\caption{The tropical variety $\trop(\Flag_4)$ has 78 maximal cones organized in five $S_4\rtimes \mathbb Z_2$-orbits. The algebraic invariants of the initial ideals associated to these cones and the F-vectors of their associated polytopes are listed here.}\label{fig:num} 
\end{table}

\begin{proposition}\label{6config}
There are six tropical configurations up to symmetry (depicted in Figure~\ref{figure:1}) arising from the maximal cones of $\trop(\Flag_4)$. They are further organized in five $S_4\rtimes \mathbb Z_2$-orbits.
\end{proposition}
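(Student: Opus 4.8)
The plan is to translate the statement about tropical configurations into a concrete computation on the maximal cones of $\trop(\Flag_4)$ and their associated initial ideals. Recall that $\Flag_4 \subset \mathbb{P}^3\times\mathbb{P}^5\times\mathbb{P}^3$, where the $\mathbb{P}^3$ factors correspond to $\Gr(1,\mathbb{C}^4)$ and $\Gr(3,\mathbb{C}^4)$ (both just $\mathbb{P}^3$, carrying no tropical geometry beyond the linear subspace), and the middle $\mathbb{P}^5$ corresponds to $\Gr(2,\mathbb{C}^4)$. A point $\mathbf{w}$ in the relative interior of a maximal cone $C$ of $\trop(\Flag_4)$ restricts to a point of $\trop(\Gr(2,\mathbb{C}^4))$ in the $p_{ij}$-coordinates, which is the space of tropical lines in tropical $3$-space (equivalently, tropical planes/lines since $\Gr(2,4)$ parametrizes lines in $\mathbb{P}^3$), and to a point in the $p_i$ and $p_{ijk}$ coordinates which we interpret as a (tropical) point and a (tropical) hyperplane. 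So to each maximal cone we associate the combinatorial type of ``a tropical point lying on a tropical line contained in a tropical plane,'' and the claim is that exactly six such types occur, falling into five $S_4\rtimes\mathbb{Z}_2$-orbits.

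The key steps I would carry out are as follows. First, reuse the output of the Gfan computation from the proof of Theorem~\ref{flag4}: the list of $78$ maximal cones of $\trop(\Flag_4)$ together with the representative weight vectors $\mathbf{w}_C$ produced by \texttt{computeWeightVectors}, and the known decomposition into five $S_4\rtimes\mathbb{Z}_2$-orbits. Second, for each orbit representative $C$, extract from $\mathbf{w}_C$ the induced weight vector on the $\binom{4}{2}=6$ Plücker coordinates of $\Gr(2,\mathbb{C}^4)$ and read off the corresponding point of $\trop(\Gr(2,\mathbb{C}^4))$; since $\trop(\Gr(2,\mathbb{C}^4))$ is the space of phylogenetic trees on $4$ leaves (the Petersen-graph fan / space of tropical lines in $\mathbb{R}^3$), identify which tree type (combinatorial type of tropical line) it gives, and similarly record the positions of the tropical point (from the $p_i$) and the tropical plane (from the $p_{ijk}$) relative to that line. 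Third, draw the resulting incidence configuration; I would depict it exactly as in Figure~\ref{figure:1}. Fourth, compare the six configurations pairwise to confirm they are genuinely distinct as configurations (not merely as cones), and check that the $S_4\rtimes\mathbb{Z}_2$-action on cones matches the action on configurations, so that the five cone-orbits map onto five configuration-orbits — with the one subtlety that two of the six configurations must lie in a single orbit (or, dually, one configuration type arises from two orbits of cones), which is what makes ``six configurations but five orbits'' consistent.

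The main obstacle I anticipate is the bookkeeping in Step~4: making the correspondence ``maximal cone $\leadsto$ tropical configuration'' well-defined and checking it is constant on the relative interior of each cone, and then correctly tracking how the $S_4\rtimes\mathbb{Z}_2$-symmetry (column permutations together with the complementation $\mathcal{V}\mapsto\mathcal{V}^\perp$, which swaps the two $\mathbb{P}^3$ factors and acts on the $p_{ij}$) permutes both the cones and the pictures. In particular one must verify that the single orbit of non-prime cones (orbit $5$, of size $6$) and the four prime orbits give rise to the claimed five orbits of configurations, and pin down precisely which two of the six pictures are identified under symmetry. This is essentially a finite verification, carried out with the same \texttt{Macaulay2}/\texttt{Gfan} data as Theorem~\ref{flag4}; the only non-mechanical content is choosing the right combinatorial invariants (the tree type of the tropical line, plus the incidence data of point and plane) that distinguish the six types, which I would tabulate and then illustrate in Figure~\ref{figure:1}.
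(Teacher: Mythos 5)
Your proposal is correct and follows essentially the same route as the paper: project each orbit representative of the maximal cones to the three tropical Grassmannians, interpret the result as a point on a tropical line in a tropical plane, classify the line by its combinatorial (tree) type, and verify by finite inspection that six configurations arise in five orbits, with one orbit containing two configuration types exchanged by the $\mathbb{Z}_2$-action. The only detail the paper makes explicit that you gloss over is that the tropical incidences (point on line, line in plane) actually hold, which follows from realizability of the chain over $\mathbb{C}\{\{t\}\}$ via the structure theorem.
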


\begin{proof}
The tropical variety $\trop(\Flag_4)$ is contained in 
\[
\trop(\Gr(1,\mathbb C^4))\times\trop(\Gr(2,\mathbb C^4))\times\trop(\Gr(3,\mathbb C^4)).
\]
Each tropical Grassmannian parametrizes tropicalized linear spaces (see \cite[Theorem 4.3.17]{M-S}). This implies that  every point $p$ in $\trop(\Flag_4)$ corresponds to a  chain of tropical linear subspaces given by a point on a tropical line contained in a tropical plane. All  tropical chains are \textit{realizable}, meaning that they are the tropicalization of the classical chains of linear spaces of $\Bbbk^4$ corresponding to a point $q$ in $\Flag_4$  such that $\val(q)=p$, where $\Bbbk=\mathbb C\!\{\!\{t\!\}\!\}$ and $\val$ is the natural valuation on this field (see \cite[Part (3) of Theorem 3.2.3]{M-S}).

In this case, there is only one combinatorial type for the tropical plane and four possible types  for the lines up to symmetry (see \cite[Example 4.4.9]{M-S}). The plane consists of six $2$-dimensional cones positively spanned by all possible pairs of vectors  $(1,0,0)^{T},(0,1,0)^{T},(0,0,1)^{T}$, and $(-1,-1,-1)^{T}$. The combinatorial types of the tropical lines are shown in Figure \ref{Comb types}. The leaves of these graphs represent the rays of the tropical line labeled $1$ up to $4$ corresponding to the positive hull of each of the vectors 
$(1,0,0)^{T},(0,1,0)^{T},(0,0,1)^{T}$, and $(-1,-1,-1)^{T}$.

\begin{figure}
    \begin{center}
    \begin{tikzpicture}[scale=.4]
\draw (-2,0) -- (-1,1) -- (1,1) -- (2,0);
\draw (-2,2) -- (-1,1);
\draw (1,1) -- (2,2);
\node at (-2.2,0) {2};
\node at (-2.2,2) {1};
\node at (2.2,0) {4};
\node at (2.2,2) {3};

\draw (3.5,0) -- (4.5,1) -- (6.5,1) -- (7.5,0);
\draw (3.5,2) -- (4.5,1);
\draw (6.5,1) -- (7.5,2);
\node at (3.3,0) {3};
\node at (3.3,2) {1};
\node at (7.7,0) {4};
\node at (7.7,2) {2};

\draw (9,0) -- (10,1) -- (12,1) -- (13,0);
\draw (9,2) -- (10,1);
\draw (12,1) -- (13,2);
\node at (8.7,0) {4};
\node at (8.7,2) {1};
\node at (13.2,0) {3};
\node at (13.2,2) {2};

\draw (15.3,0) -- (16.3,1);
\draw (15.3,2) -- (16.3,1);
\draw (16.3,1) -- (17.3,2);
\draw (16.3,1) -- (17.3,0);
\node at (15,0) {4};
\node at (15,2) {1};
\node at (17.6,0) {3};
\node at (17.6,2) {2};

\end{tikzpicture}
\caption{Combinatorial types of tropical lines in $\mathbb R^4/ \mathbb R\bf{1}$.}\label{Comb types}
\end{center}
\end{figure}
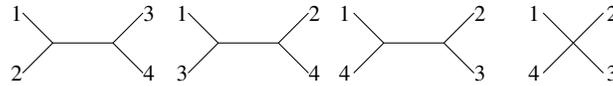

Consider the $S_4\rtimes \mathbb Z_2$-orbits of maximal cones of $\trop(\Flag_4)$. If we compute the chain of tropical linear spaces corresponding to an element in each orbit, we get the configurations in  Figure~\ref{figure:1}. Note that we do not include the labeling since up to symmetry we can get all possibilities.
The point on the line is the black dot. In case the intersection of the line with the rays of the plane is the vertex of the plane then we denote this with a  hollow dot. A vertex of the line is colored in gray if it   lies on a ray of the plane.
For example in orbit 2,  label the rays $1$ to $4$ anti-clockwise starting from the top left edge. We have rays $1$ and $2$ 
in the $2$-dimensional positive hull of $(1,0,0)^{T}$ and $(0,1,0)^{T}$. The vector associated to the internal edge is $(1,1,0)^{T}$. 
The gray point is the origin and the black point has coordinates $(a,1,0)^{T}$ for $a>1$.

Orbits 1 and 4 in Figure~\ref{figure:1} have size $24$, orbits 2 and 3 have  size $12$ and orbit 5 has size $6$. Note that orbit 5 corresponds to non-prime initial ideals. Orbit 1 contains two combinatorial types of tropical configurations and one is sent to the other by the $\mathbb Z_2$-action on the tropical variety.
The orbits $2$ and $3$ differ from the fact that for each combinatorial type of line the gray dot can lie on one of the four rays of the tropical plane. These possibilities are grouped in two pairs, one is in orbit $2$ and the other in orbit $3$. 
\qed
\end{proof}

\begin{figure}
    \begin{center}
    \begin{tikzpicture}[scale=.7]
\begin{scope}[xshift=-.8cm]
\node at (-1.5,1) { Orbit 1};
\draw (0,0) -- (1,1) -- (3,1) -- (4,0);
\draw (0,2) -- (1,1);
\draw (3,1) -- (4,2);

\draw [fill] (3.5,1.5) circle [radius=0.15];
\draw [fill, white] (1.5,1) circle [radius=0.15];
    \draw [gray, ultra thick] (1.5,1) circle [radius=0.
    15];
   
\begin{scope}[xshift=5.5cm]
\draw (0,0) -- (1,1) -- (3,1) -- (4,0);
\draw (0,2) -- (1,1);
\draw (3,1) -- (4,2);

\draw [fill, gray] (3,1) circle [radius=0.15];
\draw [fill] (1.5,1) circle [radius=0.15];    
\end{scope}   
\end{scope}
\begin{scope}[yshift=-3cm, xshift=-.75cm]
\node at (-1.5,1) {Orbit 2};
\draw (0,0) -- (1,1) -- (3,1) -- (4,0);
\draw (0,2) -- (1,1);
\draw (3,1) -- (4,2);

\draw [fill] (.5,1.5) circle [radius=0.15];
\draw [fill, gray] (3,1) circle [radius=0.15];

\begin{scope}[xshift=8cm]
\node at (-1.5,1) { Orbit 3};
\draw (0,0) -- (1,1) -- (3,1) -- (4,0);
\draw (0,2) -- (1,1);
\draw (3,1) -- (4,2);

\draw [fill] (3.5,1.5) circle [radius=0.15];
\draw [fill, gray] (1,1) circle [radius=0.15];
\end{scope} 

\begin{scope}[yshift=-3cm]
\node at (-1.5,1) {Orbit 4};
\draw (0,0) -- (1,1) -- (3,1) -- (4,0);
\draw (0,2) -- (1,1);
\draw (3,1) -- (4,2);

\draw [fill] (3.5,1.5) circle [radius=0.15];
\draw [fill, gray] (3,1) circle [radius=0.15];

\begin{scope}[xshift=8cm]
\node at (-1.5,1) {Orbit 5};
\node at (-1.5,0.5) {(Non-prime)};
\draw (0,0) -- (1,1) -- (3,1) -- (4,0);
\draw (0,2) -- (1,1);
\draw (3,1) -- (4,2);

\draw [fill] (2.5,1) circle [radius=0.15];
\draw [fill, white] (1.5,1) circle [radius=0.15];
    \draw [gray, ultra thick] (1.5,1) circle [radius=0.15];    
\end{scope}
\end{scope}   
\end{scope}
\begin{scope}[yshift=-8.5cm]
\draw [fill] (3,0) circle [radius=0.15];
\draw [fill, gray] (3,.5) circle [radius=0.15];
\draw [fill, white] (3,1) circle [radius=0.15];
    \draw [gray, ultra thick] (3,1) circle [radius=0.15]; 
\node at (4.1,0) { \ the point};  
\node at (5.75,.5) { a point on a ray of the plane};
\node at (5.3,1) {the vertex of the plane};
\end{scope}
\end{tikzpicture}
    \end{center}
    \caption{The list of all tropical configurations up to symmetry that arise in $\Flag_4$. The hollow and the full gray dot denote whether that vertex of the line is the vertex of the plane  or it is contained in a ray of the plane. The black dot is the position of the point on the line.}
   \label{figure:1} 
\end{figure}

\begin{theorem}\label{flag5}
The tropical variety $\trop(\Flag_5)$ is a $10$-dimensional fan in $\mathbb{R}^{30}/\mathbb {R}^4$ with a $4$-dimensional lineality space. It consists of $69780$ maximal cones which are grouped in $536$ $S_5\rtimes \mathbb Z_2$-orbits.
These give rise to $531$ orbits of binomial initial ideals and among these $180$ are prime. They correspond to $180$ non-isomorphic toric degenerations.  
\end{theorem}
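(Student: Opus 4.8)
The plan is to prove this exactly as Theorem~\ref{flag4} was proved, by explicit computation with the \texttt{ToricDegenerations} package together with \emph{Gfan}, \emph{Polyhedra}, and \emph{polymake}, but now on a much larger scale. First I would set up $I_5$ as an ideal in the total coordinate ring $R$ of $\mathbb P^4\times\mathbb P^9\times\mathbb P^9\times\mathbb P^4$ (since $\binom{5}{1}-1=4$, $\binom{5}{2}-1=9$, $\binom{5}{3}-1=9$, $\binom{5}{4}-1=4$), that is, the polynomial ring over $\mathbb C$ in the $30$ Pl\"ucker variables $p_J$ with $\varnothing\neq J\subsetneq\{1,\dots,5\}$, graded by the incidence matrix $D$ of the four Grassmannian factors; here $H$ is the $4$-dimensional row span of $D$. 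Feeding $I_5$ and the $S_5\rtimes\mathbb Z_2$-action into the \emph{Macaulay2} interface to \emph{Gfan} produces a subfan $F$ of the Gr\"obner fan of $I_5$ of dimension $14$ (equal to $\dim\Flag_5+s$ with $s=4$ the number of projective factors), and quotienting by $H$ yields $\trop(\Flag_5)$ as a $10$-dimensional fan in $\mathbb R^{30}/\mathbb R^4$ with a $4$-dimensional lineality space; reading off the output records the $69780$ maximal cones and their $536$ $S_5\rtimes\mathbb Z_2$-orbits.

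Next, \texttt{computeWeightVectors} supplies a representative weight vector in the relative interior of each maximal cone, and \texttt{groebnerToricDegenerations} computes the corresponding initial ideals, tests whether each is binomial, and tests primeness over $\mathbb Q$ for the binomial ones; this isolates the $531$ orbits whose initial ideal is binomial (the remaining $5$ orbits giving non-binomial initial ideals, some of which illustrate the phenomenon mentioned in the remark after Lemma~\ref{lem:multiplicity}). To upgrade primeness over $\mathbb Q$ to primeness over $\mathbb C$, i.e.\ to verify $\init_C(I_5)=I(W_C)$ with $I(W_C)$ as in Lemma~\ref{toric:lemma}, I would compare the degrees of $V(\init_C(I_5))$ and $V(I(W_C))$ as subvarieties of $\mathbb P^{29}$, computing the latter degree as that of $V(I_L)$ directly from $W_C$; equality forbids non-toric components in the primary decomposition and hence certifies that $\init_C(I_5)$ is toric. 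Performing this check for every binomial orbit leaves exactly $180$ prime orbits, and for each such cone toricity gives multiplicity one by Lemma~\ref{lem:multiplicity}.

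For each of the $180$ prime orbits I would then build the polytope of the normalization of the associated toric variety exactly as in the proof of Theorem~\ref{flag4}: take the lattice $M$ generated by the columns of $W_C$, replace it by the lattice spanned by the columns of $(\ker((\ker W_C)^T))^T$ when its index in $\mathbb Z^{14}$ is not $1$, arrange the resulting generating set $\mathcal B_C$ so that the first four rows reproduce $D$, and take the convex hull of the points of $\mathbb N\mathcal B_C$ whose first four coordinates all equal $1$; these polytopes are the Newton--Okounkov bodies of the valuations of \S6. Finally, using \emph{polymake} I would show that no two of these $180$ polytopes are combinatorially equivalent, or, where combinatorial types coincide, separate them by finer lattice invariants (normalized volume, numbers of lattice points at each dilation, $f$-vectors, automorphism groups); since unimodular equivalence of the normal fans would force combinatorial equivalence, this establishes that the $180$ toric degenerations are pairwise non-isomorphic.

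The main obstacle is computational scale rather than anything conceptual: the symmetric Gr\"obner traversal of $\trop(\Flag_5)$ over nearly $70000$ maximal cones is already demanding, the degree computations certifying toricity must be repeated for all $531$ binomial orbits, and distinguishing $180$ polytopes requires on the order of $10^4$ pairwise tests. I expect the genuinely delicate step to be this last one: combinatorial inequivalence almost certainly does not hold for all $\binom{180}{2}$ pairs, so one must organize the polytopes into combinatorial types and then rule out unimodular equivalence within each type using lattice invariants, with the possibility that a few stubborn pairs can only be separated by a direct search for a unimodular isomorphism or an explicit obstruction to one.
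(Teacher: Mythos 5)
Your proposal follows essentially the same computational route as the paper's proof, which explicitly proceeds as in Theorem~\ref{flag4} with only the grading matrix \eqref{degree2} adjusted and the $S_5\rtimes\mathbb Z_2$-symmetry exploited to make the \emph{Gfan} traversal terminate. The only notable divergences are that for $\Flag_5$ the paper verifies primeness only over $\mathbb Q$ (via the \emph{primdec} library in \emph{Singular}) rather than certifying toricity over $\mathbb C$ by the degree comparison you propose, and it reports that pairwise combinatorial inequivalence of the $180$ polytopes already holds, so your fallback to finer lattice invariants turns out not to be needed.
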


\begin{proof}
The flag variety $\Flag_5$ is a $10$-dimensional variety defined by 66
quadratic polynomials in the total coordinate ring of $\mathbb P^4\times \mathbb{P}^{9}\times \mathbb{P}^{9}\times \mathbb P^4$. These are of the form $\sum_{j\in J\backslash I}(-1)^{l_j}p_{I\cup\{j\}}p_{J\backslash\{j\}}$, where $J,I\subset \{1,\ldots,5\}$ and $l_j=\#\{k\in J: j<k\}+\#\{i\in I:i<j\}$. 

The proof is similar to the proof of Theorem~\ref{flag4}.
The only difference is that  the action of $S_5\rtimes \mathbb Z_2$  on $\Flag_5$ is crucial for the computations. In fact, without exploiting the symmetries the calculations to get the tropicalization would not terminate. Moreover, we only verify primeness  of the initial ideals over $\mathbb Q$ using the \emph{primdec} library \cite{Primedec} in \emph{Singular}
\cite{DGPS}. We compute the polytopes associated to the normalization of the $180$  toric varieties in the same way as Theorem~\ref{flag4}, only changing the matrix of the grading. It is now given by
\begin{equation}\label{degree2}
\setcounter{MaxMatrixCols}{30}
\begin{bmatrix}
 1 & 1 & 1 & 1 &1& 0 & 0 & 0 & 0 & 0 & 0 & 0 & 0 & 0 & 0 &0 & 0 & 0 & 0 & 0 & 0 & 0 & 0 & 0 & 0 &0&0&0&0&0\\
 0 & 0& 0 &0 &0&1 &1&1&1&1&1&1&1&1&1&0 & 0 & 0 & 0 & 0 & 0 & 0 & 0 & 0 & 0&0&0&0&0&0\\
 0 & 0& 0 &0 &0 &0 & 0 & 0 & 0 & 0 & 0 & 0 & 0 & 0 & 0 &1 &1&1&1&1&1&1&1&1&1&0&0&0&0&0\\
  0&0 & 0& 0 &0 &0 &0 & 0 & 0 & 0 & 0 & 0 & 0 & 0 & 0 & 0 & 0 & 0 & 0 & 0 & 0 & 0 & 0 & 0 & 0 &1 & 1 & 1 & 1 &1
\end{bmatrix}.
\end{equation}

Since there are no combinatorial equivalences among the normal fans to these polytopes, we deduce that the obtained toric degenerations are pairwise non-isomorphic. More information on the non-prime initial ideals is available in Table~\ref{nonprime} in the appendix.
\qed
\end{proof}

\section{String polytopes and the FFLV-polytope}\label{sec:rep}
This section provides an introduction to string cones, string polytopes, and the FFLV polytope with explicit computations for $\Flag_4$ and $\Flag_5$. String polytopes are described by Littelmann in \cite{Lit98}, and by Berenstein and Zelevinsky in \cite{BZ01}. FFLV stands for Feigin, Fourier, and Littelmann, 
who defined this polytope in \cite{FFL11}, and Vinberg who conjectured its existence in a special case. Both, the string polytopes and the FFLV polytope, can be used to obtain toric degenerations of the flag variety.

\medskip

Let $W=S_n$ be the symmetric group, which is the Weyl group corresponding to $G=SL_n$ over $\mathbb C$ with the longest word $w_0$ given in the alphabet of simple transpositions $s_i=(i,i+1)\in S_n$.
We choose the Borel subgroup $B\subset SL_n$ of upper triangular matrices and the maximal torus $T\subset B$ of diagonal matrices. Further, let $U^-\subset B^-$ be the unipotent radical in the opposite Borel subgroup, i.e. the set of lower triangular matrices with $1$'s on the diagonal.
Let $\text{Lie}(G)= \mathfrak g= \mathfrak{sl}_n$ be the corresponding Lie algebra, i.e. $n\times n$-matrices with trace zero. Let $\mathfrak h=\text{Lie}(T)\subset \mathfrak g$ be diagonal matrices. We fix a Cartan decomposition $\mathfrak g= \mathfrak n^- \oplus \mathfrak b$ with $\text{Lie}(B)=\mathfrak b$ and $\text{Lie}(U^-)=\mathfrak n^-$. Note that $SL_n/B=\Flag_n$.
By $R$ we denote the root system  of $\mathfrak g$ (see \cite[Section 9.2]{H78} for the definition). Here $R$ is of type $\mathtt{A}_{n-1}$. Let $R^+$ be the set of positive roots with respect to the given choice of $\mathfrak b$.
We denote the simple roots generating the root lattice by $\alpha_1,\ldots,\alpha_{n-1}$, and their coroots generating the dual lattice by $\alpha_i^\vee$. For positive roots $\alpha_i+\alpha_{i+1}+\dots +\alpha_j$ with $j\ge i$ we use the short notation $\alpha_{i,j}$. Note that using this notation we have $\alpha_{i,i}=\alpha_i$. The number of positive roots is $N$, which is also the length of $w_0$ as reduced expression in the $s_i$. 
For a positive root $\beta \in R^+$, $f_\beta$ is a non-zero root vector in $\mathfrak n^-$ of weight $-\beta$.
Let $P$ denote the weight lattice of $T$ generated by the fundamental weights $\omega_1,\ldots,\omega_{n-1}$. The definition can be found in \cite[Section 13.1]{H78}.
A weight $\lambda\in P$ is \textit{regular dominant}, if $\lambda=\sum_{i=1}^{n-1}a_i\omega_i$ with $a_i\in \mathbb Z_{> 0}$ for all $i$.
The subset of regular dominant weights is denoted by $P^{++}$. 

\begin{figure}
\begin{center}
\begin{tikzpicture}[scale=.7]

\node at (0,2) {$l_3$};
\node at (0,1) {$l_2$};
\node at (0,0) {$l_1$};

\node at (7,0) {$L_3$};
    \draw [fill] (6.5,0) circle [radius=0.05];
\node at (7,1) {$L_2$};
    \draw [fill] (6.5,1) circle [radius=0.05];
\node at (7,2) {$L_1$};
    \draw [fill] (6.5,2) circle [radius=0.05];

\node[below] at (2.5,-.5) {$s_{1}$};
\node[below] at (3.5,-.5) {$s_{2}$};
\node[below] at (4.5,-.5) {$s_{1}$};

\node[above] at (3.5,1.65) {$v_{1,3}$};
    \draw [fill] (3.5,1.5) circle [radius=0.05];
\node[right] at (4.6,0.5) {$v_{2,3}$};
    \draw [fill] (4.5,0.5) circle [radius=0.05];
\node[left] at (2.4,0.5) {$v_{1,2}$};
    \draw [fill] (2.5,0.5) circle [radius=0.05];

\draw[rounded corners] (0.5,0) --(1.25,0) -- (2,0)-- (3,1) --(4,2) -- (5.25,2);
    \draw[->, rounded corners] (1.5,0) -- (1.25,0);
    \draw[->, rounded corners] (3.5,1.5) -- (3,1);
    \draw[->, rounded corners] (6.5,2) -- (5.25,2);
\draw[rounded corners] (1.25,1) -- (2,1) -- (3,0) -- (3.5,0)-- (4,0) -- (5,1) -- (5.57,1)-- (6.5,1);
    \draw[->] (0.5,1) -- (1.25,1);
    \draw[->] (3.25,0) -- (3.5,0);
    \draw[->] (5.5,1) -- (5.75,1);
\draw[rounded corners] (1.5,2) -- (3,2) -- (4,1)-- (5,0) -- (5.75,0) -- (6.5,0);
    \draw[->] (0.5,2) -- (1.5,2);
    \draw[->] (3.5,1.5) -- (4,1);
    \draw[->] (5.5,0) -- (5.75,0);

\end{tikzpicture}
\end{center}
\caption{Pseudoline arrangement corresponding to $\underline w_0=s_1s_2s_1$ for $\Flag_3$ with orientation induced by $l_1$.}\label{fig:pseudo} 
\end{figure}
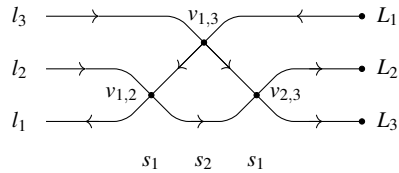
  
\medskip  
For a fixed weight $\lambda\in P^{++}$ and a reduced expression $\underline w_0$ of $w_0$ we construct the string polytope $Q_{\underline w_0}(\lambda)$.
This description can be found in \cite{GP00} and \cite{Lit98}. To $\underline w_0$ one associates a \emph{pseudoline arrangement}. 
It consists of $n$ horizontal \emph{pseudolines} 
(or in short \emph{lines}) labeled $1$ to $n$ on the left from bottom to top. Pairwise, they cross exactly once and the order of crossings depends on $\underline w_0$.
More precisely, a simple reflection $s_i$ induces a crossing on level $i$, see Figure~\ref{fig:pseudo}. 
The diagram has vertices $v_{i,j}$ for every crossing of lines $l_i$ and $l_j$, as well as vertices $L_1,\dots, L_n$ from top to bottom at the right ends of the lines.
Every line $l_i$ with $1\le i < n$ induces an orientation of the diagram obtained by orienting $l_j$ for $j>i$ from left to right and $l_k$ for $k\le i$ from right to left.

Fix an oriented path $v_0\to \dots \to v_s$ in an (oriented) pseudoline arrangement and assume
three adjacent vertices $v_{k-1}\to v_{k}\to v_{k+1}$ on the path belong to the same pseudoline $l_i$. Whenever a path does not change the line at a crossing, we are in this situation.
Let $v_k$ be the intersection of $l_i$ and $l_j$.
The path is \emph{rigorous}, if it avoids the following two situations:
\begin{itemize}
\item $i<j$ and both lines are oriented to the left or
\item $i>j$ and both lines are oriented to the right.
\end{itemize}
The first situation is visualized on the left of Figure~\ref{fig:rigorous} and the second on the right. The thick arrow is the part of line $l_i$ that must not be contained in a rigorous path. We denote by $\mathcal P_{\underline w_0}$ the set of all possible rigorous paths for all orientations induced by the lines $l_i$ with $1\le i<n$.

\begin{figure}[h]
\begin{center}
\begin{tikzpicture}

\draw[rounded corners] (3,0) -- (2,0) -- (1,1) -- (0,1);
        \draw[->] (3,0) -- (2.5,0);
        \draw[->] (0.8,1) -- (.5,1);
\draw[rounded corners] (3,1) -- (2,1) -- (1,0) -- (0,0);
    \draw[->] (3,1) -- (2.5,1);
    \draw[->] (0.8,0) -- (.5,0);
\draw[->, ultra thick] (1.9,0.9) -- (1.1,0.1);

\begin{scope}[xshift=5cm]
  \draw[rounded corners] (3,0) -- (2,0) -- (1,1) -- (0,1);
        \draw[->] (.5,1) -- (.8,1);
        \draw[->] (2.2,0) -- (2.5,0);
\draw[rounded corners] (3,1) -- (2,1) -- (1,0) -- (0,0);
    \draw[->] (.5,0) -- (.8,0);
    \draw[->] (2.2,1)-- (2.5,1);
\draw[<-, ultra thick] (1.9,0.1) -- (1.1,0.9);
\end{scope}
\end{tikzpicture}
\end{center}
\caption{The two local orientations with thick arrows forbidden in rigorous paths.}\label{fig:rigorous}
\end{figure}
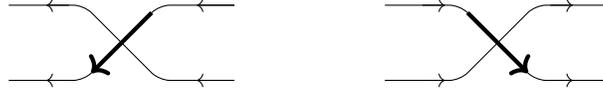

\begin{example}
Consider $\Flag_4$ with reduced expression $\underline w_0=s_1s_2s_3s_2s_1s_2$. We draw the corresponding pseudoline arrangement in Figure~\ref{rig.path} with orientation induced by $l_1$. The rigorous paths for this orientation have source $L_1$ and sink $L_2$. An example of a \emph{rigorous} path is
\[
{\bf p}= L_1\to v_{1,4} \to v_{1,3}\to v_{3,4} \to v_{2,3}\to L_2.
\]
An example for a \emph{non-rigorous} path is one that passes through a thick arrow, for example
\[
{\bf p}'=L_1\to v_{1,4}\to v_{3,4} \to v_{2,4}\to v_{2,3}\to L_2.
\]
\end{example}

\begin{figure}
\begin{center}
\begin{tikzpicture}
\draw[rounded corners] (0,0) -- (1,0) -- (4,3) -- (8,3);
    \draw[->] (.75,0) -- (.5,0);
    \draw[->] (2.5,1.5) -- (2,1);
    \draw[->] (3.5,2.5) -- (3,2);
    \draw[->] (7,3) -- (6.5,3);
\draw[rounded corners] (0,1) -- (1,1) -- (2,0) -- (5,0) -- (7,2) -- (8,2);
    \draw[->] (0.25,1) -- (.5,1);
    \draw[->] (3,0) -- (3.5,0);
    \draw[->] (5.5,0.5) -- (6,1);
    \draw[->] (7.1,2) -- (7.5,2);
\draw[rounded corners] (0,2) -- (2,2) -- (3,1) -- (4,1) -- (5,2) -- (6,2) -- (7,1) -- (8,1);
    \draw[->] (1,2) -- (1.5,2);
    \draw[->] (3.1,1) -- (3.5,1);
    \draw[->] (5.1,2) -- (5.5,2);
    \draw[->] (7.1,1) -- (7.5,1);
\draw[rounded corners] (0,3) -- (3,3) -- (6,0)  -- (8,0);
    \draw[->] (1,3) -- (1.5,3);
    \draw[->] (3.5,2.5) -- (4,2);
    \draw[->] (4.5,1.5) -- (5.1,0.9);
    \draw[->] (6.5,0) -- (7,0);

\draw[->, ultra thick] (4.2,1.8) -- (4.8,1.2);
\draw[->, ultra thick] (5.2,0.8) -- (5.8,0.2);
\draw[->, ultra thick] (6.2,1.8) -- (6.8,1.2);

\node at (1.5,-.5) {$s_1$};
\node at (2.5,-.5) {$s_2$};
\node at (3.5,-.5) {$s_3$};
\node at (4.5,-.5) {$s_2$};
\node at (5.5,-.5) {$s_1$};
\node at (6.5,-.5) {$s_2$};

\draw [fill] (8,2) circle [radius=0.03];
\draw [fill] (8,3) circle [radius=0.03];
\draw [fill] (3.5,2.5) circle [radius=0.03];
\draw [fill] (2.5,1.5) circle [radius=0.03];
\draw [fill] (4.5,1.5) circle [radius=0.03];
\draw [fill] (6.5,1.5) circle [radius=0.03];
\draw [fill] (5.5,.5) circle [radius=0.03];

\node at (8.5,2) {$L_2$};
\node at (8.5,3) {$L_1$};
\node at (3,2.5) {$v_{1,4}$};
\node at (2,1.5) {$v_{1,3}$};
\node at (4,1.5) {$v_{3,4}$};
\node at (6,1.5) {$v_{2,3}$};
\node at (5,0.5) {$v_{2,4}$};

\end{tikzpicture}
\caption{A pseudoline arrangement for $\Flag_4$ with $\underline w_0=s_1s_2s_3s_2s_1s_2$ and orientation induced by $l_1$. Thick arrows denote forbidden line segments for rigorous paths.}\label{rig.path}
\end{center}
\end{figure}
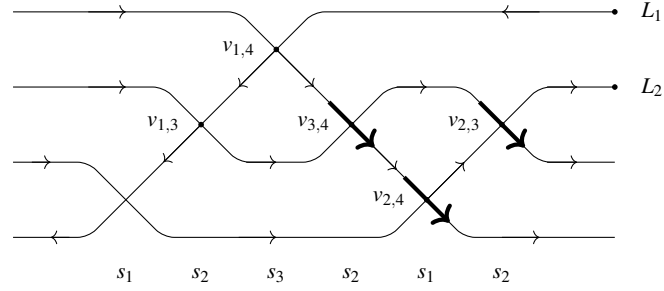

Back to the general case, we fix an orientation induced by $l_i, 1\le i <n$ and consider all rigorous paths from $L_{i}$ to $L_{i+1}$. We associate the \emph{weight} $c_{\bf p}$ to each such path ${\bf p}$ as follows. Denote by $\{c_{i,j}\}_{1\le i,j\le n}$ the standard basis of $\mathbb R^N$, where we set $c_{i,j}=-c_{j,i}$ if $i>j$ and $c_{j,j}=0$. Note that $N$ is the number of crossings in a pseudoline arrangement and hence we can associate the basis vector $c_{i,j}$ to the crossing of $l_i$ and $l_j$ for $1\le i,j\le n$. Consider a rigorous path ${\bf p}=L_{i}\to v_{r_1}\to\dots\to v_{r_m}\to L_{i+1}$. Every vertex $v_{r_s}$ corresponds to the crossing of two lines $l_k$ and $l_j$. If ${\bf p}$ changes from line $l_k$ to line $l_j$ at $v_{r_s}$ we associate the vector $c_{k,j}\in\mathbb R^{N}$. We set $c_{\bf p}$ to be the sum of all such $c_{k,j}$ in ${\bf p}$ and denote it by $c_{\bf p}$. 
 
\begin{definition}\label{def:stringcone}
For a fixed reduced expression ${\underline w_0}$, we define the \emph{string cone} to be 
\[
C_{\underline w_0} = \{ (y_{i,j}) \in \mathbb{R}^N \mid (c_{\bf p})^T(y_{i,j}) \geq 0, \forall \; {\bf p}\in\mathcal P_{\underline w_0}\}.
\]
\end{definition}
This is not the original definition of a string cone, 
but an equivalent one (see \cite[Corollary~5.8]{GP00}). 
It can be extended to describe string cones for Schubert varieties, see \cite{BF}.

\begin{example} There are two rigorous paths in Figure~\ref{fig:pseudo}, 
$L_1\to v_{1,3}\to v_{2,3}\to L_2$ and $L_1\to v_{1,3}\to v_{1,2} \to v_{2,3} \to L_2$.
The corresponding weights are $c_{1,3}-c_{2,3}$ and $c_{1,2}$ inducing the inequalities $y_{1,3}-y_{2,3}\ge 0$ 
and $y_{1,2}\ge 0$. Considering the orientation induced  by $l_2$ there is a rigorous path $L_2\to v_{2,3}\to L_3$ which gives the inequality $y_{2,3}\ge 0$. 
The string cone corresponding to the underlying non-oriented pseudoline arrangement in Figure~\ref{fig:pseudo} is 
then given by 
\[ 
C_{s_1s_2s_1}=\{ y_{1,2}\ge0,\ y_{1,3}\ge y_{2,3}\ge0  \}.
\]
\end{example}
 
Each crossing of lines $l_k$ and $l_m$ corresponds to an index $i_j$ associated to a simple reflection $s_{i_j}$ in $\underline w_0$ (see e.g. Figure~\ref{fig:pseudo}). We will therefore also denote $c_{k,m}=c_{j}$. Let $1\le i\le n-1$ and $r_1,\dots,r_{n_i}$ be the indices such that $s_{i_{r_p}}=s_i$ in $\underline w_0$ for $1\le p\le n_i$. Further, let $k_1,\dots,k_t$ be the positions where $s_{i_{k_m}}\in\{s_{i-1},s_{i+1}\}$ for $1\le m\le t$. In particular, $r_1,\dots, r_{n_i}$ are those positions inducing a crossing at level $i$ in the corresponding pseudoline arrangement. The following appears in \cite{Lit98}. 
 
 \begin{definition}\label{weightedcone}
 The \emph{weighted string cone} $\mathcal C_{\underline w_0}\subset \mathbb R^{N}\times \mathbb R^{n-1}_{\ge 0}$ is obtained from $ C_{\underline w_0}$ by adding variables $m_1,\dots, m_{n-1}$, and for every $1\le i\le n-1$ and $j\in\{r_1,\dots,r_{n_i}\}$ the inequality
 \begin{eqnarray*} 
 m_i-y_{j}-2\sum_{r_p>j} y_{r_{p}}+\sum_{k_p>j}y_{k_p}\ge 0,
\end{eqnarray*}
where $(y_{k},m_l)_{1\le k\le N\atop 1\le l< n}\in \mathbb R^{N}\times \mathbb R^{n-1}_{\ge 0
}$. For a weight $\lambda=\sum_{i=1}^{n-1}a_i\omega_i\in P$ the \emph{string polytope} is defined as 
\[ 
Q_{\underline w_0}(\lambda):=Q_{\underline w_0}\cap H_{\lambda}.
\]
Here $H_{\lambda}$ is the intersection of the hyperplanes defined by $m_i=a_i$ for all $1\le i< n$.
 \end{definition}

The additional \emph{weight inequalities} can also be obtained combinatorially as described in \cite{BF}. We will consider for all computations the weight $\rho=\sum_{i=1}^{n-1}\omega_i$. This is the weight in $P^{++}$ with minimal choice of coefficients of fundamental weights in $\mathbb {Z}_{>0}$, namely all are 1. Note that all string polytopes are cut out from the weighted string cone, but for different weights they are different polytopes.
 
\medskip
 
The following result is a simplified version of Theorem~1 proven by Caldero \cite{Cal02} for flag varieties. A more  general statement is given by Alexeev and Brion in \cite[Theorem~3.2]{AB04}.
\begin{theorem}\label{thm:AB}
There exists a flat family $\mathcal X\to \mathbb A^1$ for a normal variety $\mathcal X$ such that for $t\not =0$ the fiber over $t$ is isomorphic to $\Flag_n$ and for $t=0$ it is isomorphic to a projective toric variety $X_0$ with polytope $Q_{\underline w_0}(\lambda)$ for $\lambda \in P^{++}$. 
 \end{theorem}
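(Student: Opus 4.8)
The statement to prove is Theorem~\ref{thm:AB}: the existence of a flat degeneration of $\Flag_n$ to the projective toric variety $X_0$ with polytope $Q_{\underline w_0}(\lambda)$. My strategy is to realize this degeneration as a Rees-algebra (equivalently Gr\"obner) degeneration driven by a valuation coming from the string parametrization. The key object is the \emph{string valuation} $\val_{\underline w_0}$ on the homogeneous coordinate ring $\bigoplus_{k\ge 0}V_{k\lambda}^*$ of $\Flag_n$ in the Pl\"ucker-type embedding attached to $\lambda$. Concretely, one uses Littelmann's result that for each $k$ the dual Weyl module $V_{k\lambda}^*$ has a basis indexed by the lattice points of $kQ_{\underline w_0}(\lambda)=Q_{\underline w_0}(k\lambda)$, obtained by applying strings of lowering operators $f_{i_1}^{(a_1)}\cdots f_{i_N}^{(a_N)}$ to a highest weight vector in the order prescribed by $\underline w_0$; the exponent vector $(a_1,\dots,a_N)$ is the string datum. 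Assigning to a section its lexicographically-minimal string datum defines $\val_{\underline w_0}$, and the associated graded algebra is the semigroup algebra of $\bigcup_k \{k\}\times Q_{\underline w_0}(k\lambda)\cap\mathbb Z^N$, i.e.\ of the cone over $Q_{\underline w_0}(\lambda)$.

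\textbf{Key steps, in order.} First I would recall the string parametrization of the canonical basis (or of an adapted monomial basis) of $V_\lambda$ and its multiplicative/filtration behaviour: Littelmann \cite{Lit98} shows the string data of basis vectors in $V_{k\lambda}$ fill out $Q_{\underline w_0}(k\lambda)\cap\mathbb Z^N$, and that the natural multiplication map $V_{k\lambda}\otimes V_{l\lambda}\to V_{(k+l)\lambda}$ respects the string filtration, with the "leading term" on string data being addition of the vectors. Second, I would package this as a homogeneous valuation $\val_{\underline w_0}$ on the section ring $A=\bigoplus_k H^0(\Flag_n,\mathcal L_\lambda^{\otimes k})$ with one-dimensional leaves, so that $S(A,\val_{\underline w_0})$ is exactly the affine semigroup $\bigcup_k\{k\}\times(Q_{\underline w_0}(k\lambda)\cap\mathbb Z^N)$, which by the integrality (normality/Minkowski-additivity) of the string polytope is finitely generated and saturated. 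Third, the monomials in $A$ cutting out this semigroup form a finite Khovanskii basis, so by \cite[Theorem~1.1]{An13} (the Anderson degeneration theorem, already invoked in \S\ref{sec:1}) the Rees algebra $\bigoplus_k \mathrm{Rees}(A,\val_{\underline w_0})$ gives a flat family $\mathcal X\to\mathbb A^1$ whose general fiber is $\mathrm{Proj}\,A=\Flag_n$ and whose special fiber is $\mathrm{Proj}$ of the semigroup algebra, i.e.\ the projective toric variety $X_0$ with polytope $Q_{\underline w_0}(\lambda)$. Fourth, normality of $\mathcal X$: since the special fiber is the toric variety of the (saturated, hence normal) semigroup and $\Flag_n$ is normal, and the total space is the $\mathrm{Proj}$ of a Rees algebra of a normal semigroup filtration, one concludes $\mathcal X$ is normal — alternatively cite \cite{Cal02} or \cite[Theorem~3.2]{AB04} directly for this point.

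\textbf{Main obstacle.} The crux is establishing that $\val_{\underml w_0}$ genuinely has one-dimensional leaves and that its value semigroup equals the lattice-point semigroup of the cone over $Q_{\underline w_0}(\lambda)$ — i.e.\ that the string-data basis is compatible with multiplication so that no "collisions" or missing points occur. This is exactly the content of Littelmann's and Caldero's work and is where all the representation-theoretic input lives: one needs the fact that $Q_{\underline w_0}(k\lambda)=kQ_{\underline w_0}(\lambda)$ (so the Hilbert function matches) together with the additivity of leading string data under the multiplication $V_{k\lambda}\otimes V_{l\lambda}\to V_{(k+l)\lambda}$. Once that is in hand, flatness and the identification of the fibers are formal consequences of the Rees/Gr\"obner-degeneration machinery recalled in \S\ref{sec:background}. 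Since the paper explicitly states this is a simplified form of Caldero's Theorem~1 and of \cite[Theorem~3.2]{AB04}, I would present the above as a sketch and defer the hard representation-theoretic lemma to those references.
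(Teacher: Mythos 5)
Your proposal is correct and follows essentially the same route as the paper, which does not prove this statement itself but cites Caldero and Alexeev--Brion, noting only that the proof uses the embedding $\Flag_n\hookrightarrow\mathbb P(V(\lambda))$ and Lusztig's dual canonical basis. Your sketch is the standard argument behind those references (string parametrization of the basis, the induced filtration/valuation with value semigroup the lattice points of the cone over $Q_{\underline w_0}(\lambda)$, and the Rees-algebra degeneration), and you correctly isolate and defer the genuine representation-theoretic content to the same sources.
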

The proof of Theorem~\ref{thm:AB} uses the embedding $\Flag_n \hookrightarrow \mathbb P(V(\lambda))$ and Lusztig's dual canonical basis, where $V(\lambda)$ is the irreducible representation of $\mathfrak{sl}_n$ with highest weight $\lambda$. 

\medskip
For two polytopes $A,B\subset \mathbb R^l$, the \emph{Minkowski sum} is defined to be $A+B=\{a+b: a\in A,b\in B\}$. Consider the weight $\rho$. The string polytope $Q_{\underline w_0}(\rho)$ is in general \emph{not} the Minkowski sum of string polytopes $Q_{\underline w_0}(\omega_1),\dots,Q_{\underline w_0}(\omega_{n-1})$, which motivates the following definition.

\begin{definition}\label{def:mp}
A string cone has the \emph{weak Minkowski property} (MP), if for every lattice point $p\in Q_{\underline w_0} (\rho)$ there exist lattice points $p_{\omega_i}\in Q_{\underline w_0}(\omega_i)$ such that
\[
p=p_{\omega_1}+p_{\omega_2}+\dots + p_{\omega_{n-1}}.
\]
\end{definition}

\begin{remark}
Note that the (non-weak) Minkowski property would require the above condition on lattice points to be true for arbitrary weights $\lambda$. Further, note that if $Q_{\underline w_0}(\rho)$ is the Minkowski sum of the fundamental string polytopes $Q_{\underline w_0}(\omega_i)$, then MP is satisfied.
\end{remark}

\begin{proposition}\label{thm:stringpoly}
For $\Flag_4$ there are four string polytopes in $\mathbb R^{10}$ up to unimodular equivalence 
and three of them satisfy MP. For $\Flag_5$ there are $28$ string polytopes in $\mathbb R^{14}$ up to unimodular equivalence and $14$ of them satisfy MP.
\end{proposition}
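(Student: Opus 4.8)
The statement is computational in nature, so the plan is to enumerate all reduced expressions $\underline w_0$ of the longest word $w_0$ in $S_n$ for $n=4,5$, construct for each the weighted string cone $\mathcal C_{\underline w_0}$ via Definition~\ref{weightedcone}, intersect with the hyperplanes $m_i=1$ to obtain $Q_{\underline w_0}(\rho)$, and then sort the resulting polytopes into unimodular equivalence classes. For $\Flag_4$ there are $16$ reduced words and for $\Flag_5$ there are $768$, so in both cases the enumeration is entirely finite and can be carried out explicitly; we do this with the \emph{Macaulay2} package \texttt{ToricDegenerations} together with \emph{polymake} \cite{GJ00}. First I would generate the reduced words (for instance from the commutation and braid classes, or simply by brute-force search in $S_n$), attach to each the pseudoline arrangement, and read off the rigorous paths $\mathcal P_{\underline w_0}$ for all orientations induced by $l_1,\dots,l_{n-1}$; feeding the path weights $c_{\bf p}$ and the weight inequalities of Definition~\ref{weightedcone} to a polytope package produces each $Q_{\underline w_0}(\rho)\subset\mathbb R^N$ (with $N=6$ for $n=4$, $N=10$ for $n=5$) as an explicit lattice polytope.

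Next I would reduce modulo unimodular equivalence. Since unimodular equivalence is in general hard to decide directly, the practical route is: compute a battery of invariants (the $f$-vector, the number of lattice points, the normalized volume, the lattice-point counts of dilations, the Ehrhart data) to separate polytopes into candidate classes, and then for polytopes sharing all invariants exhibit an explicit affine lattice isomorphism between them (or verify its nonexistence via \emph{polymake}'s lattice-isomorphism routines). Carrying this out yields exactly four classes for $\Flag_4$ and $28$ for $\Flag_5$, which is the first half of the claim. The computation here is routine but must be done carefully, because the string polytopes for different reduced words can be combinatorially equivalent yet not unimodularly equivalent, so combinatorial invariants alone do not suffice and one genuinely needs the lattice isomorphism check.

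For the weak Minkowski property, for each representative $\underline w_0$ I would enumerate all lattice points of $Q_{\underline w_0}(\rho)$ and all lattice points of the fundamental string polytopes $Q_{\underline w_0}(\omega_i)$ (obtained by intersecting the same weighted string cone with $m_i=1$, $m_j=0$ for $j\neq i$), and then check whether every lattice point $p$ of $Q_{\underline w_0}(\rho)$ can be written as $p_{\omega_1}+\dots+p_{\omega_{n-1}}$ with $p_{\omega_i}$ a lattice point of $Q_{\underline w_0}(\omega_i)$; this is a finite feasibility check (e.g.\ an integer-programming or exhaustive search for each $p$). Running it over the four classes for $\Flag_4$ shows three satisfy MP and one fails, and over the $28$ classes for $\Flag_5$ shows $14$ satisfy MP. I should note that MP is not a unimodular invariant a priori, so strictly one should verify it is constant on each equivalence class (it is, since a unimodular transformation carries the Minkowski decomposition of $Q_{\underline w_0}(\rho)$ into one for the equivalent polytope once the fundamental polytopes are transported along the same map); alternatively one simply checks MP for one representative per class and records the outcome. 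All of these verifications are contained in the data repository accompanying the paper.

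\textbf{Main obstacle.} The conceptually delicate point, rather than the raw computation, is distinguishing unimodular equivalence from the weaker combinatorial equivalence: several of the string polytopes of $\Flag_5$ have identical $f$-vectors and even identical combinatorial types while lying in distinct unimodular classes, so the count of $28$ depends on finer lattice invariants (lattice-point enumerators, Ehrhart polynomials, hollowness/normality data) and, where those tie, on explicitly producing or ruling out an affine $\mathrm{GL}_N(\mathbb Z)$-map. Getting this classification exactly right — and symmetrically, confirming that no two of the $28$ are accidentally identified — is where the argument must be most careful. \qed
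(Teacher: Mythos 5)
Your proposal is correct and follows essentially the same route as the paper: enumerate the reduced words of $w_0$, build each $Q_{\underline w_0}(\rho)$ from the weighted string cone, classify the resulting lattice polytopes up to unimodular equivalence with software, and verify MP by lattice-point computations (the paper additionally cuts down the enumeration using commutation classes and the symmetry $s_i\mapsto s_{n-i}$, computing only $31$ polytopes for $\Flag_5$ rather than all $768$ reduced words). Your MP test — checking decomposability of each lattice point of $Q_{\underline w_0}(\rho)$ directly — is in fact slightly more careful than the paper's comparison of $|LP(Q_{\underline w_0}(\omega_1)+\cdots+Q_{\underline w_0}(\omega_{n-1}))|$ with $\dim V(\rho)$, which as stated only certifies failure of MP outright.
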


\begin{proof}
We first consider $\Flag_4$. There are 16 reduced expressions for $w_0$. Simple transpositions $s_i$ and $s_j$ with $1\le i<i+1<j< n$ commute and are also called \emph{orthogonal}. We consider reduced expressions up to changing those, i.e. there are eight symmetry classes. We fix the weight in $P^{++}$ to be $\rho=\omega_1+\omega_2+\omega_3$. The string polytopes are organized in four classes up to unimodular equivalence. See Table~\ref{tab:string4}, in which $121321$ denotes the reduced expression $\underline w_0=s_1s_2s_1s_3s_2s_1$. Hence they give four different toric degenerations for the embedding $\Flag_4\hookrightarrow \PP(V(\rho))$.

\begin{table}
\begin{tabular}{| l|  l|  l|  l|  l|  l|}

\hline
$\underline w_0$ & Normal & MP & Weight vector ${\bf w}_{\underline w_0}$& Prime  & Tropical cone \\
\hline

\begin{tabular}{ l}
String 1: \\
121321 \\
212321 \\
232123 \\
323123
\end{tabular} &
\begin{tabular}{ l}
 \\
yes \\
yes \\
yes \\
yes
\end{tabular}
&
\begin{tabular}{ l}
 \\
yes \\
yes \\
yes \\
yes
\end{tabular}
&
\begin{tabular}{ l}
 \\
$(0, 32, 24, 7, 0, 16, 6, 48, 38, 30, 0, 4, 20, 52)$ \\
$(0, 16, 48, 7, 0, 32, 6, 24, 22, 54, 0, 4, 36, 28)$ \\
$(0, 4, 36, 28, 0, 32, 24, 6, 22, 54, 0, 16, 48, 7)$ \\
$(0, 4, 20, 52, 0, 16, 48, 6, 38, 30, 0, 32, 24, 7)$
\end{tabular}
&
\begin{tabular}{ l}
 \\
yes \\
yes \\
yes \\
yes
\end{tabular} &
\begin{tabular}{ l}
 \\
rays 10, 18, 19, cone 71 \\
rays 6, 10, 19, cone 44 \\
rays 0, 3, 6, cone 3 \\
rays 0, 1, 3, cone 1
\end{tabular} \\
\hline

\begin{tabular}{ l}
String 2: \\
123212 \\
321232
\end{tabular} &
\begin{tabular}{ l}
  \\
yes \\
yes
\end{tabular}&
\begin{tabular}{ l}
  \\
yes \\
yes
\end{tabular}&
\begin{tabular}{ l}
  \\
$(0, 32, 18, 14, 0, 16, 12, 48, 44, 27, 0, 8, 24, 56)$ \\
$(0, 8, 24, 56, 0, 16, 48, 12, 44, 27, 0, 32, 18, 14)$
\end{tabular}
&
\begin{tabular}{ l}
  \\
yes \\
yes
\end{tabular}
&
\begin{tabular}{ l}
  \\
rays 2, 10, 18, cone 36 \\
rays 0, 1, 2, cone 0
\end{tabular} \\
\hline

\begin{tabular}{ l}
String 3: \\
213231
\end{tabular}
&\begin{tabular}{ l}
  \\
yes
\end{tabular}
&
\begin{tabular}{ l}
  \\
yes
\end{tabular}
&
\begin{tabular}{ l}
  \\
$(0, 16, 48, 13, 0, 32, 12, 20, 28, 60, 0, 8, 40, 22)$
\end{tabular}&
\begin{tabular}{ l}
  \\
yes
\end{tabular}
&
\begin{tabular}{ l}
  \\
rays 3, 6, 19, cone 24
\end{tabular}\\
\hline

\begin{tabular}{ l}
String 4: \\
132312 
\end{tabular}&
\begin{tabular}{ l}
  \\
yes
\end{tabular}
&
\begin{tabular}{ l}
  \\
no
\end{tabular}
&
\begin{tabular}{ l}
  \\
$(0, 16, 12, 44, 0, 8, 40, 24, 56, 15, 0, 32, 10, 26)$
\end{tabular}&
\begin{tabular}{ l}
  \\
no
\end{tabular}
&
\begin{tabular}{ l}
  \\
rays  1, 2, 17, cone 17
\end{tabular}\\
\hline
\ FFLV & \ yes & \ yes & 

\begin{tabular}{l }
$w^{min}=(0,2,2,1,0,1,1,2,1,2,0,1,1,1)$\\
$w^{reg}=(0,3,4,3,0,2,2,4,3,5,0,1,2,3)$
\end{tabular}
 & \ yes & 
\begin{tabular}{l}
rays 9, 11, 12, cone 56\\
rays 9, 11, 12, cone 56
\end{tabular}
 \\
\hline

\end{tabular}
\caption{Isomorphism classes of string polytopes for $n=4$ and $\rho$ depending on $\underline w_0$, normality, the weak Minkowsky property, the weight vectors ${\bf w}_{\underline w_0}$ constructed in \S\ref{string:weight}, primeness of the binomial initial ideals $\init_{{\bf w}_{\underline w_0}}(I_4)$, and the corresponding tropical cones with their spanning rays as they appear 
at \url{http://www.mi.uni-koeln.de/~lbossing/tropflag/tropflag4.html%https://github.com/ToricDegenerations
} .}  \label{tab:string4}
\end{table}

In order to verify whether the weak Minkowski property holds or not, we proceed as follows. We fix $\underline w_0$ to compute the string polytope $Q_{\underline w_0}(\rho)$ using \emph{polymake}. The number of lattice points in $Q_{\underline w_0}(\rho)$ is $\dim(V(\rho))=64$.
Then we compute the polytopes $Q_{\underline w_0}(\omega_1),Q_{\underline w_0}(\omega_2),Q_{\underline w_0}(\omega_3)$ and set $P=Q_{\underline w_0}(\omega_1)+Q_{\underline w_0}(\omega_2)+Q_{\underline w_0}(\omega_3)$. Now let $LP(P)$ be the set of lattice points in $P$. If $\vert LP(P)\vert <64$, then there exists a lattice point in $Q_{\underline w_0}(\rho)$, that can not be expressed as $p_1+p_2+p_3$ for $p_i\in Q_{\underline w_0}(\omega_i)$. For $\underline w_0=s_1s_3s_2s_3s_1s_2$ 
, we observe that
\[
\vert LP(Q_{\underline w_0}(\omega_1)+Q_{\underline w_0}(\omega_2)+Q_{\underline w_0}(\omega_3))\vert =62<64.
\]
Hence the class String 4 does not satisfy MP. For the classes String 1, 2, and 3 equality holds and MP is satisfied.

Now consider $\Flag_5$. There are 62 reduced expressions $\underline w_0$ up to changing orthogonal transpositions. The map $L:S_5\to S_5$ given on simple reflections by $L(s_i)=s_{4-i+1}$ induces a symmetry among the string polytopes. Namely, for a fixed $\lambda \in P^{++}$, there is a unimodular equivalence between $Q_{\underline w_0}(\lambda)$ and $Q_{L(\underline w_0)}(\lambda)$. Exploiting this symmetry, we compute 31 string polytopes for $\rho$. These are organized in 28 unimodular equivalence classes, that arise from further symmetries of the underlying pseudoline arrangements. Table~\ref{tab:stringweight5} shows which reduced expressions belong to string polytopes within one class of unimodular equivalence, and which string cones satisfy MP. Proceeding as for $\Flag_4$, we observe that 14 out of 28 classes satisfy MP.
\qed
\end{proof}

We will now turn to the FFLV polytope. It is defined in \cite{FFL11} by Feigin, Fourier, and Littelmann to describe bases of irreducible highest weight representations $V(\lambda)$. In \cite{FeFL16} they give a construction of a flat degeneration of the flag variety into the toric variety associated to the FFLV polytope. It is also an example of the more general setup presented in \cite{FFL15}. We give the general definition here and compute the FFLV polytopes for $\Flag_4$ and $\Flag_5$ for $\rho$. Recall, that $\alpha_i$ for $1\le i< n$ are the simple roots of $\mathfrak{sl}_n$, and  $\alpha_{p,q}$ is the positive root $\alpha_p+\alpha_{p+1}+\dots+\alpha_{q}$ for $1\le p\le q<n$.

\begin{definition}\label{def:dyck}
A \emph{Dyck path} is a sequence of positive roots 
${\bf d}=(\beta_0,\ldots,\beta_k)$ with $ k\ge0$
satisfying the following conditions
\begin{enumerate}
\item if $k=0$ then ${\bf d}=(\alpha_i)$ for $1\le i\le n-1$,
\item if $k\ge 1$ then \begin{enumerate}
    \item the first and the last roots are simple, i.e. $\beta_0=\alpha_i$, $\beta_k=\alpha_j$ for $1\le i<j\le n-1$, 
    \item if $\beta_s=\alpha_{p,q}$ then $\beta_{s+1}$ is either $\alpha_{p,q+1}$ or $\alpha_{p+1,q}$.
 \end{enumerate}
\end{enumerate}
Denote by $\mathcal D$ the set of all Dyck paths. We choose the positive roots $\alpha>0$ as an indexing set for a basis of $\mathbb R^N$.
\end{definition}

\begin{definition}\label{def:ffl}
The \emph{FFLV polytope} $P(\lambda)\subset \mathbb R^{N}_{\ge 0}$ for a weight $\lambda=\sum_{i=1}^{n-1}m_i\omega_i\in P^{++}$ is defined as
\begin{eqnarray*}
P(\lambda)=\left\{ (r_{\alpha})_{\alpha>0}\in \mathbb R^N_{\ge 0}\left|  \begin{matrix} \forall {\bf d}\in \mathcal D:\text{ if }\beta_0=\alpha_{i} \text{ and } \beta_k=\alpha_{j} \\
r_{\beta_0}+ \dots +r_{\beta_k}\le m_{i}+\dots+m_{j}\end{matrix}\right.\right\}.
\end{eqnarray*}
\end{definition}

\begin{example}
Consider $\Flag_4$. Then the Dyck paths are
\begin{eqnarray*}
&(\alpha_1),(\alpha_2),(\alpha_3),&\\
&(\alpha_1,\alpha_{1,2},\alpha_2),(\alpha_2,\alpha_{2,3},\alpha_3),&\\
&(\alpha_1,\alpha_{1,2},\alpha_{2},\alpha_{2,3},\alpha_3)
\text{ and } (\alpha_1,\alpha_{1,2},\alpha_{1,3},\alpha_{2,3},\alpha_3)& 
\end{eqnarray*} For our favorite choice of weight $\lambda=\rho=\omega_1+\omega_2+\omega_3$ we obtain the FFLV polytope
\begin{eqnarray*}
P(\rho)=\left\{ (r_{\alpha})_{\alpha>0}\left| \begin{matrix} 
r_{\alpha_1}\le 1,r_{\alpha_2}\le 1,r_{\alpha_3}\le 1,\\
r_{\alpha_1}+r_{\alpha_{1,2}}+r_{\alpha_2}\le 2, r_{\alpha_2}+r_{\alpha_{2,3}}+r_{\alpha_3}\le 2,\\
r_{\alpha_1}+r_{\alpha_{1,2}}+r_{\alpha_2}+r_{\alpha_{2,3}}+r_{\alpha_3}\le 3,\\
r_{\alpha_1}+r_{\alpha_{1,2}}+r_{\alpha_{1,3}}+r_{\alpha_{2,3}}+r_{\alpha_3}\le 3
\end{matrix}\right.\right\}\subset \mathbb R^{6}_{\ge 0}.
\end{eqnarray*}
\end{example}

The following is a corollary of \cite[Proposition~11.6]{FFL11}, which says that a strong version of the Minkowski property is satisfied by the FFLV polytope for $\Flag_n$. It can alternatively be shown for $n=4,5$ using the methods in the proof of Proposition~\ref{thm:stringpoly}.

\begin{corollary}
The FFLV polytope $P(\rho)$ satisfies the weak Minkowski property.
\end{corollary}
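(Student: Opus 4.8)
The plan is to derive the statement directly from the strong Minkowski property for FFLV polytopes proved in \cite[Proposition~11.6]{FFL11}. First I would pin down the claim: the analogue for the FFLV polytope of the weak Minkowski property of Definition~\ref{def:mp} asserts that every lattice point $p\in P(\rho)$ can be written as $p=p_{\omega_1}+\dots+p_{\omega_{n-1}}$ with each $p_{\omega_i}$ a lattice point of $P(\omega_i)$. Since $\rho=\omega_1+\dots+\omega_{n-1}$ and the inequalities defining $P(\lambda)$ in Definition~\ref{def:ffl} have right-hand sides $m_i+\dots+m_j$ that are additive in $\lambda$, one always has the inclusion of lattice-point sets $LP(P(\lambda))+LP(P(\mu))\subseteq LP(P(\lambda+\mu))$ for dominant $\lambda,\mu$; so the only thing that needs an argument is the reverse inclusion.

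That reverse inclusion is exactly \cite[Proposition~11.6]{FFL11}: for dominant integral weights $\lambda$ and $\mu$, the lattice points of $P(\lambda+\mu)$ are precisely the sums of a lattice point of $P(\lambda)$ and a lattice point of $P(\mu)$. Applying this $n-2$ times, peeling off one fundamental weight at a time (first $\rho=\omega_1+(\omega_2+\dots+\omega_{n-1})$, then $\omega_2+\dots+\omega_{n-1}=\omega_2+(\omega_3+\dots+\omega_{n-1})$, and so on), writes every lattice point of $P(\rho)$ as $p_{\omega_1}+\dots+p_{\omega_{n-1}}$ with $p_{\omega_i}\in LP(P(\omega_i))$, which is the claim.

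For the two cases $n=4$ and $n=5$ one can alternatively argue computationally, exactly as in the proof of Proposition~\ref{thm:stringpoly}: compute the fundamental FFLV polytopes $P(\omega_i)$, form the Minkowski sum $P=P(\omega_1)+\dots+P(\omega_{n-1})$, and count $\vert LP(P)\vert$. Since $P\subseteq P(\rho)$ and the number of lattice points of the FFLV polytope $P(\mu)$ equals $\dim V(\mu)$ by \cite{FFL11}, one verifies $\vert LP(P)\vert=\dim V(\rho)$ (which is $64$ for $n=4$ and $1024$ for $n=5$). Combined with the inclusion $LP(P(\omega_1))+\dots+LP(P(\omega_{n-1}))\subseteq LP(P(\rho))$ noted above and with $\vert LP(P(\rho))\vert=\dim V(\rho)$, this forces an equality of lattice-point sets, so MP holds.

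The only real subtlety is bookkeeping: one must invoke from \cite{FFL11} the statement about \emph{lattice points} of $P(\lambda+\mu)$, not merely the identity $P(\lambda+\mu)=P(\lambda)+P(\mu)$ of convex bodies, and one must check that iterating over the $n-1$ fundamental weights is legitimate — which it is, since the strong Minkowski property composes along successive sums. Beyond this, no genuine difficulty arises, which is why the statement is phrased as a corollary.
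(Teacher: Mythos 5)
Your proposal is correct and follows the paper's own route: the paper likewise deduces the statement directly from the strong Minkowski property of \cite[Proposition~11.6]{FFL11} and notes the same alternative computational check for $n=4,5$ via the method of Proposition~\ref{thm:stringpoly}. You simply spell out the iteration over fundamental weights that the paper leaves implicit.
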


\begin{remark}
The FFLV polytope is in general not a string polytope. A computation in \emph{polymake} 
shows that $P(\rho)$ for $\Flag_5$ is not combinatorially equivalent to 
any string polytope for $\rho$.
\end{remark}

\section{String cones and the tropicalized flag variety}\label{string:weight}

We have seen in \S\ref{sec:background} how to obtain  toric degenerations from  maximal prime cones of the tropicalization of the flag varieties. We compare the different toric degenerations that arise from the different approaches. Moreover, applying \cite[Lemma~3.2]{Cal02} we construct a weight vector from a string cone. Computational evidence for $\Flag_4$ and $\Flag_5$ shows that each constructed weight vector lies in the relative interior of a maximal cone in $\trop(\Flag_n)$. A similar idea for a more general case is carried out in \cite[\S7]{KM16}. For the FFLV polytope we compute weight vectors for $\Flag_n$ with $n=4,5$ (see Example~\ref{exp.ffl}) following a construction given in \cite{FFR15}. 

\medskip
We will now prove the result in Theorem~\ref{ts-comparison} by analyzing the  polytopes associated to the different  toric degenerations of  $\Flag_n$ for $n=4,5$.

\begin{table}
\begin{center}
\begin{tabular}{| l |l|  l|  l| }
\hline
Orbit  & Combinatorially equivalent polytopes\\
\hline
1  &  String 2 \\
\hline
2 &  String 1 (Gelfand-Tsetlin)\\
\hline
3  & String 3 and FFLV \\
\hline
4 & - \\
\hline
\end{tabular}
\end{center}
\caption{Combinatorial equivalences among the polytopes obtained from prime cones in $\trop(\Flag_4)$ and string polytopes resp. the FFLV polytope.}\label{tab:f-vector4}
\end{table}

\begin{proof}[of Theorem~\ref{ts-comparison}]
In order to distinguish the different toric degenerations, we consider the toric varieties associated to their special fibers. In case of the degenerations induced by the string polytopes and FFLV polytope, these toric varieties are normal. This might  not be true for the degenerations found in Theorem \ref{flag4} and Theorem \ref{flag5}. Hence, we consider two toric degenerations to be different if the normalization of their special fibers are not isomorphic.

Two toric varieties are isomorphic, if their corresponding fans are unimodular equivalent. In our case  the fans are the normal fans of the polytopes.
For this reason we first look for combinatorial equivalences between those. If they are not combinatorially equivalent then their normal fans can not be unimodular equivalent. We use \emph{polymake}~\cite{GJ00} for computations with polytopes.

From Table~\ref{tab:f-vector4} one can see that for $\Flag_4$ there is one toric degeneration, whose associated polytope is not combinatorially 
equivalent to any string polytope or the FFLV polytope for $\rho$.
Hence, its corresponding normal toric variety is not isomorphic to any toric variety associated to these polytopes. For the toric varieties associated to the other polytopes we can not exclude isomorphism since there might be a unimodular equivalences between pairs of normal fans.

For $\Flag_5$, Table~\ref{tab:f-vector5} in the appendix shows that  there are 168 polytopes obtained from prime cones of $\trop(\Flag_5)$ that are not combinatorially equivalent to any string polytope or the FFLV polytope for $\rho$.\qed  
\end{proof}

\begin{remark}
There are also string polytopes, which are not combinatorially equivalent to any polytope from prime cones in $\trop(\Flag_n)$ for $n=4,5$. These are exactly those not satisfying MP, i.e. one string polytope for $\Flag_4$ and 14 for $\Flag_5$. See also Table~\ref{tab:stringweight5}.
\end{remark}

From now on, we fix a reduced expression $\underline w_0=s_{i_1}\ldots s_{i_N}$ and we consider the sequence of simple roots $S=(\alpha_{i_1},\dots, \alpha_{i_N})$. Recall that for a positive root $\alpha$ we denote by $f_{\alpha}$ the root vector in $\mathfrak n^-\subset \mathfrak{sl}_n$ of weight $-\alpha$. By \cite[Lemma~2]{FFL15} the following holds.
\begin{proposition}\label{univ.env}
The universal enveloping algebra $U(\mathfrak n^-)$ is linearly generated by monomials of the form ${\bf f^ m}=f_{\alpha_{i_1}}^{m_1}\ldots f_{\alpha_{i_N}}^{m_N}$ for $m_i\in \mathbb N$. 
\end{proposition}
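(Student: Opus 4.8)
\textbf{Proof proposal for Proposition~\ref{univ.env}.}

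The plan is to deduce this from the Poincaré--Birkhoff--Witt (PBW) theorem together with the specific structure of the root system of type $\mathtt{A}_{n-1}$. Recall that PBW gives a basis of $U(\mathfrak n^-)$ consisting of ordered monomials $\prod_{\beta\in R^+} f_\beta^{c_\beta}$, once a total order on $R^+$ has been fixed; in particular $U(\mathfrak n^-)$ is spanned by \emph{arbitrary} products of the root vectors $f_\beta$, $\beta\in R^+$. So it suffices to show that every such product can be rewritten as a linear combination of monomials ${\bf f^m}=f_{\alpha_{i_1}}^{m_1}\cdots f_{\alpha_{i_N}}^{m_N}$ in the simple root vectors $f_{\alpha_{i_1}},\dots,f_{\alpha_{i_N}}$ arranged in the order prescribed by $\underline w_0$.

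First I would record that in type $\mathtt{A}$ every positive root $\alpha_{p,q}=\alpha_p+\alpha_{p+1}+\cdots+\alpha_q$ satisfies $f_{\alpha_{p,q}}\in[\,\mathfrak n^-,\mathfrak n^-\,]+\Bbbk f_{\alpha_p}+\cdots$, and more precisely $f_{\alpha_{p,q}}$ is (up to a nonzero scalar) an iterated bracket $[f_{\alpha_p},[f_{\alpha_{p+1}},[\dots,f_{\alpha_q}]\dots]]$ of the simple root vectors. Hence, expanding all brackets, any product of the $f_\beta$ lies in the subalgebra of $U(\mathfrak n^-)$ generated by $f_{\alpha_1},\dots,f_{\alpha_{n-1}}$ --- but that subalgebra is all of $U(\mathfrak n^-)$, since $\mathfrak n^-$ is generated as a Lie algebra by the simple root vectors. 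Thus $U(\mathfrak n^-)$ is spanned by \emph{unordered} words in $f_{\alpha_1},\dots,f_{\alpha_{n-1}}$. The remaining task is to bring such a word into the fixed shape $f_{\alpha_{i_1}}^{m_1}\cdots f_{\alpha_{i_N}}^{m_N}$ dictated by the reduced word $\underline w_0$.

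For that last step I would argue by a straightening (Bergman diamond-lemma / induction on a suitable degree) argument: using the commutation relations $[f_{\alpha_i},f_{\alpha_j}]=0$ for $|i-j|\ge 2$ and $[f_{\alpha_i},[f_{\alpha_i},f_{\alpha_j}]]=0$ (Serre relations) for $|i-j|=1$, together with $[f_{\alpha_i},f_{\alpha_{i\pm1}}]=\pm f_{\alpha_{i,i+1}}$ or similar, one shows that whenever two adjacent letters in a word are out of the order prescribed by $\underline w_0$ one can swap them at the cost of lower-order terms (in a total order on words refining word-length and then, say, lexicographic comparison against the $\underline w_0$-pattern). Since $\underline w_0=s_{i_1}\cdots s_{i_N}$ is reduced, the associated ordered PBW-type monomials ${\bf f^m}$ are known (by Lusztig's and De Concini--Kazhdan's PBW bases attached to a reduced word) to already span $U(\mathfrak n^-)$, so the induction terminates. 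I expect the main obstacle to be the bookkeeping in this straightening step: one must choose the ordering on words carefully so that every commutator-correction term is strictly smaller, which is exactly the content of the PBW theorem for the reduced word $\underline w_0$; invoking \cite[Lemma~2]{FFL15} (as the statement does) is the clean way to sidestep re-deriving this, and I would structure the writeup to reduce everything to that lemma.
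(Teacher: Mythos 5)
The first thing to note is that the paper does not prove this proposition at all: the sentence preceding it reads ``By [FFL15, Lemma~2] the following holds,'' and that citation is the entire argument. Your final fallback --- reduce everything to [FFL15, Lemma~2] --- therefore coincides exactly with what the paper does, and is the right way to write this up.

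The self-contained straightening argument you sketch before that fallback, however, has a genuine gap. The problem is the correction terms. If two adjacent letters $f_{\alpha_i}f_{\alpha_{i\pm1}}$ are ``out of order,'' swapping them produces $[f_{\alpha_i},f_{\alpha_{i\pm1}}]=\pm f_{\alpha_{i,i+1}}$, a \emph{non-simple} root vector; to stay inside words in the simple generators you must re-expand it as $f_{\alpha_i}f_{\alpha_{i+1}}-f_{\alpha_{i+1}}f_{\alpha_i}$, which is again a word of length two --- so there is no strictly decreasing quantity and the diamond-lemma induction does not obviously terminate. Worse, ``the order prescribed by $\underline w_0$'' is not a total order on the alphabet $\{f_{\alpha_1},\dots,f_{\alpha_{n-1}}\}$, since each simple reflection recurs in $\underline w_0$ (already $s_1s_2s_1$ contains both patterns $f_1f_2$ and $f_2f_1$), so it is not even well defined which adjacent pairs need straightening. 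Finally, the appeal to ``Lusztig's and De Concini--Kazhdan's PBW bases attached to a reduced word'' is a misattribution: those bases are ordered products of root vectors $F_{\beta_k}$ for the roots $\beta_k=s_{i_1}\cdots s_{i_{k-1}}(\alpha_{i_k})$ obtained via the braid-group action, which is a different family of monomials from the ${\bf f^m}$ here (powers of the \emph{simple} root vectors repeated in the pattern of $\underline w_0$). The spanning statement for the latter is the content of the string parametrization and of [FFL15, Lemma~2] applied to the birational sequence $(\alpha_{i_1},\dots,\alpha_{i_N})$; it genuinely uses that $\underline w_0$ is a reduced word for $w_0$ and does not follow from PBW plus commutator bookkeeping. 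So: keep the citation, drop the straightening sketch.
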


The proposition may be interpreted as a definition of the universal enveloping algebra. Given a weight $\lambda$, the irreducible highest weight representation $V=V(\lambda)$ is cyclically generated by a highest weight vector $v_\lambda \in V(\lambda)$, i.e. $V(\lambda)=U(\mathfrak n^-).v_\lambda$.

\begin{example}\label{ex:exterioralg}
For $\Flag_4$ three root vectors in $\mathfrak n^-$ are 
\[
f_{\alpha_1}=\begin{bmatrix} 0 & 0 & 0 & 0\\ 1 & 0 & 0 & 0\\ 0 & 0 & 0 & 0\\ 0 & 0 & 0 & 0\end{bmatrix},\quad f_{\alpha_2}=\begin{bmatrix} 0 & 0 & 0 & 0\\ 0 & 0 & 0 & 0\\ 0 & 1 & 0 & 0\\ 0 & 0 & 0 & 0\end{bmatrix},\quad\text{ and }\quad f_{\alpha_3}=\begin{bmatrix} 0 & 0 & 0 & 0\\ 0 & 0 & 0 & 0\\ 0 & 0 & 0 & 0\\ 0 & 0 & 1 & 0\end{bmatrix}.
\]
Consider $V=\bigwedge^2\mathbb C^4$. The action of $\mathfrak n^-$ on $\mathbb C^4$ is given by $f_{\alpha_i}(e_i)=e_{i+1}$ and $f_{\alpha_i}(e_j)=0$ for $j\not = i$. 
On $V$ the $\mathfrak n^-$-action is given by
\[
f_{\alpha_i}(e_j\wedge e_k)=f_{\alpha_i}(e_j)\wedge e_k + e_{j}\wedge f_{\alpha_i}(e_k).
\]
Let $e_1\wedge e_3\in V$. Then $f_{\alpha_2}(e_1\wedge e_2)=e_1\wedge e_3$. In fact, $V=U(\mathfrak n^-).(e_1\wedge e_2)$, this implies that $e_1\wedge e_2=:v_{\omega_2}$ is a highest weight vector. If we fix $\underline w_0=s_1s_2s_1s_3s_2s_1$, we have $U(\mathfrak n^-)=\langle f_{\alpha_1}^{m_1}f_{\alpha_2}^{m_2}f_{\alpha_1}^{m_3}f_{\alpha_3}^{m_4}f_{\alpha_2}^{m_5}f_{\alpha_1}^{m_6}:m_i\in \mathbb N\rangle$. Hence
\[{\bf f}^{(0,1,0,0,0,0)}(e_1\wedge e_2)={\bf f}^{(0,0,0,0,1,0)}(e_1\wedge e_2).\] 
\end{example}

As seen in Example~\ref{ex:exterioralg}, the monomial $\bf f^m$ for a given weight vector $v\in V$ with ${\bf f^m}(v_\lambda)=v$ is not unique. To fix this, we define a term order on the monomials ${\bf f^m}$ generating $U(\mathfrak n^-)$ and pick the minimal monomial with this property. We fix for ${\bf m,n}\in \mathbb N^N$ the order
\begin{eqnarray*}\label{def:term}
{\bf f^m}\succ {\bf f^n},\text{ if }\deg({\bf f^m})>\deg({\bf f^n})\text{ or }\deg({\bf f^m})=\deg({\bf f^n})\text{ and }{\bf m}<_{lex}{\bf n}.
\end{eqnarray*}

The connection to $\trop(\Flag_n)$ is established through Pl\"ucker coordinates. For $J=\{j_1,\dots,j_k\}\subset [n]$, $p_J$ is given by the Pl\"ucker embedding as $(e_{j_1}\wedge \ldots \wedge e_{j_k})^*\in (\bigwedge^k\mathbb C^n)^*$, the dual vector space. Now $\bigwedge^k\mathbb C^n$ is the fundamental representation $V(\omega_k)=U(\mathfrak n^-).(e_1\wedge \ldots \wedge e_k)$ (see Example~\ref{ex:exterioralg}). Denote by ${\bf m}_J$ the unique multiexponent such that ${\bf f}^{{\bf m}_J}$ is $\prec$-minimal satisfying ${\bf f}^{\bf m}(e_1\wedge \ldots \wedge e_k)=e_{j_1}\wedge \ldots \wedge e_{j_k}$.

Following a construction given in \cite[Proof of Lemma 3.2]{Cal02}, we define the linear form $e:\mathbb N^N\to\mathbb N$ as $e({\bf m})=2^{N-1}m_1+2^{N-2}m_2+\ldots +2m_{N-1}+m_N$. This is a particular choice satisfying ${\bf m}\succ {\bf n} \Rightarrow e({\bf m})>e({\bf n})$ for ${\bf m,n}\in\mathbb N^N$.

\begin{definition}\label{str.wt.vec}
For a fixed reduced expression ${\underline w_0}$ the \emph{weight} of the Pl\"ucker variable $p_J$ is $e({\bf m}_J)$. We fix the \emph{weight vector} ${\bf w}_{\underline w_0}$ in $\mathbb R^{{n\choose 1}+{n\choose 2}+\cdots+{n\choose n-1}}$ to be 
\[
{\bf w}_{\underline w_0}=(e({\bf m}_1),e({\bf m}_2),\ldots,e({\bf m}_{2,3,\ldots,n})).
\]
\end{definition}

\begin{example}
We continue as in Example~\ref{ex:exterioralg} with the fixed reduced expression $\underline w_0=s_1s_2s_1s_3s_2s_1$ for $\Flag_4$. The Pl\"ucker coordinate $p_{13}$ in $\Flag_4$ is $(e_1\wedge e_3)^*$. The corresponding minimal monomial among those satisfying ${\bf f^m}(e_1\wedge e_2)=e_1\wedge e_3$ is ${\bf f}^{(0,1,0,0,0,0)}$. Hence the weight of $p_{13}$ is $e(0,1,0,0,0,0)=1\cdot 2^4=16$. Similarly, we obtain the weights of all Pl\"ucker coordinates and
\[
{\bf w}_{\underline w_0}=(0,32,24,7,0,16,6,48,38,30,0,4,20,52).
\]
Table~\ref{tab:string4} contains all weight vectors for $\Flag_4$ constructed in the way just described.
\end{example}

\begin{proposition}\label{thm:stringweight}
Consider $\Flag_n$ with $n=4,5$. The above construction produces a weight vector ${\bf w}_{\underline w_0}$ for every string cone. This weight vector lies in the relative interior of a maximal cone of $\trop(\Flag_n)$.
If further the string cone satisfies MP, then ${\bf w}_{\underline w_0}$ lies in the relative interior of a prime cone whose associated polytope is combinatorially equivalent to $Q_{\underline w_0}(\rho)$.
\end{proposition}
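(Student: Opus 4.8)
The first assertion I would dispatch as a formality. For a fixed reduced word $\underline w_0$ the order $\succ$ on the monomials ${\bf f}^{\bf m}$ is total, and since $\bigwedge^k\mathbb C^n=V(\omega_k)=U(\mathfrak n^-).(e_1\wedge\dots\wedge e_k)$ there is for each $J$ a multiexponent ${\bf m}$ with ${\bf f}^{\bf m}(e_1\wedge\dots\wedge e_k)=e_{j_1}\wedge\dots\wedge e_{j_k}$; taking the $\succ$-minimal such ${\bf m}$ defines ${\bf m}_J$, and applying the linear form $e$ gives the vector ${\bf w}_{\underline w_0}$ of Definition~\ref{str.wt.vec}. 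For $n=4,5$ these are exactly the vectors recorded in Tables~\ref{tab:string4} and~\ref{tab:stringweight5}, so the real content is the two tropical assertions.

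The plan for the membership ${\bf w}_{\underline w_0}\in\trop(\Flag_n)$ is to package the construction as a valuation and then invoke that a Gr\"obner degeneration refining a valuation is monomial-free. Concretely, ${\bf w}_{\underline w_0}$ is the composite $e\circ\val$ evaluated on the Pl\"ucker generators, where $\val$ is the valuation on the multihomogeneous coordinate ring $R/I_n=\bigoplus_{\lambda}V(\lambda)^*$ (sum over dominant weights $\lambda$, by Borel--Weil) produced by Caldero \cite[Lemma~3.2]{Cal02} (cf. \cite[Theorem~3.2]{AB04} and Theorem~\ref{thm:AB}), normalised so that $\val(\bar p_J)={\bf m}_J$ and so that its value semigroup in multidegree $\lambda$ is the set of lattice points of $Q_{\underline w_0}(\lambda)\subset\mathbb R^N_{\ge0}$. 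Since $e$ was chosen in \S\ref{string:weight} to satisfy ${\bf m}\succ{\bf n}\Rightarrow e({\bf m})>e({\bf n})$, the composite $\nu:=e\circ\val$ is again a valuation, now $\mathbb Z$-valued, so $\mathrm{gr}_\nu(R/I_n)$ is a domain and the $\nu$-leading forms of the $\bar p_J$ are nonzero. The standard filtration argument (compare \cite[Chapter~4]{M-S} and \cite{KM16}) then shows that for $f\in I_n$ the initial form $\init_{{\bf w}_{\underline w_0}}(f)$ lies in the kernel of the surjection $R\to\mathrm{gr}_\nu(R/I_n)$, $p_J\mapsto\mathrm{gr}_\nu(\bar p_J)$. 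That kernel is prime and contains no monomial, hence $\init_{{\bf w}_{\underline w_0}}(I_n)$ contains no monomial and ${\bf w}_{\underline w_0}\in\trop(\Flag_n)$; moreover $\init_{{\bf w}_{\underline w_0}}(I_n)\subseteq J_{\underline w_0}:=\ker\!\big(R\to\mathrm{gr}_\nu(R/I_n)\big)$, the toric ideal of Caldero's degeneration.

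It remains to see that ${\bf w}_{\underline w_0}$ lies in the \emph{relative interior of a maximal} cone and, assuming MP, that this cone is prime with associated polytope combinatorially equivalent to $Q_{\underline w_0}(\rho)$. For $n=4,5$ I would settle these by explicit computation with the \texttt{ToricDegenerations} package: locate the Gr\"obner cone of ${\bf w}_{\underline w_0}$ inside the list of maximal cones of $\trop(\Flag_n)$ obtained in Theorem~\ref{flag4} and Theorem~\ref{flag5} (the resulting cone and its rays are recorded in the ``Tropical cone'' columns of Tables~\ref{tab:string4} and~\ref{tab:stringweight5}), verify primeness of $\init_{{\bf w}_{\underline w_0}}(I_n)$ over $\mathbb Q$ in \emph{Singular}, and compare polytopes in \emph{polymake} (Table~\ref{tab:f-vector4}). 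The conceptual point behind the MP clause is that the weak Minkowski property is precisely the statement that the multidegree $(1,\dots,1)$ piece of $\mathrm{gr}_\nu(R/I_n)$ is spanned by the $\nu$-leading forms of Pl\"ucker monomials; for $n=4,5$ one checks this forces $\init_{{\bf w}_{\underline w_0}}(I_n)=J_{\underline w_0}$, which is then prime and of multiplicity one (Lemma~\ref{lem:multiplicity}), so applying the polytope-extraction procedure of the proof of Theorem~\ref{flag4} to $J_{\underline w_0}=I(W_C)$ returns $W_C\cdot Q_{\underline w_0}(\rho)$, unimodularly and hence combinatorially equivalent to $Q_{\underline w_0}(\rho)$.

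I expect the genuine difficulty to lie not in the tropical membership, which is soft once the Caldero valuation is available, but in the two places where the machine is unavoidable: first, upgrading ``${\bf w}_{\underline w_0}\in\trop(\Flag_n)$'' to maximality of its cone, since a linear functional of a full-rank valuation has no a priori reason to land in a top-dimensional Gr\"obner cone; and second, bridging the gap between the \emph{weak} Minkowski property (a statement only about $\rho$) and the equality $\init_{{\bf w}_{\underline w_0}}(I_n)=J_{\underline w_0}$, which we establish by the direct primeness check in the listed cases rather than by a uniform argument.
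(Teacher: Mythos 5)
Your proposal is correct, and for everything the proposition actually asserts it lands on the same proof as the paper: an explicit machine verification. The paper's own argument is entirely computational — it records the vectors ${\bf w}_{\underline w_0}$ in Tables~\ref{tab:string4} and~\ref{tab:stringweight5}, checks in \emph{Macaulay2} that each $\init_{{\bf w}_{\underline w_0}}(I_n)$ is binomial (which, given the classification of cones in Theorems~\ref{flag4} and~\ref{flag5}, certifies that the vector lies in the relative interior of a maximal cone), and then, in the MP cases, checks primeness and compares the resulting polytope with $Q_{\underline w_0}(\rho)$ in \emph{polymake}. What you add is a valuation-theoretic argument, via Caldero's filtration and the compatibility ${\bf m}\succ{\bf n}\Rightarrow e({\bf m})>e({\bf n})$, that $\init_{{\bf w}_{\underline w_0}}(I_n)$ is contained in a monomial-free prime ideal and hence that ${\bf w}_{\underline w_0}\in\trop(\Flag_n)$ \emph{a priori}. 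The paper does not carry this out — it cites \cite[Lemma~3.2]{Cal02} only as the source of the construction and treats even bare tropical membership as "computational evidence"; indeed Conjecture~\ref{conjecture} shows the authors regard the general statement as open. So your soft argument is a genuine supplement (it would prove the membership half of the conjecture for all $n$, granting that Caldero's construction yields an honest valuation with $\val(\bar p_J)={\bf m}_J$, a point you cite rather than verify), but you are right that it cannot deliver maximality of the cone, primeness, or the combinatorial equivalence with $Q_{\underline w_0}(\rho)$, and your fallback for those is exactly the paper's proof.
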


\begin{proof}
The constructed weight vectors ${\bf w}_{\underline w_0}$ can be found in Table~\ref{tab:string4} for $\Flag_4$ and Table~\ref{tab:stringweight5} in the appendix for $\Flag_5$. A computation in \emph{Macaulay2} shows that all initial ideals $\init_{{\bf w}_{\underline w_0}}(I_n)$ for $n=4,5$ are binomial, hence in the relative interiors of maximal cones of $\trop(\Flag_n)$.

Moreover, if MP is satisfied  we check using \emph{polymake} that  the polytope constructed from the maximal prime cone $C_{\underline w_0}$ with ${\textbf w}_{\underline w_0}$ in its relative interior is combinatorially equivalent to the string polytope $Q_{\underline w_0}(\rho)$. See Table~\ref{tab:string4} and Table~\ref{tab:stringweight5}. 
\qed
\end{proof}

Computational evidence leads us to the following conjecture.

\begin{conjecture}\label{conjecture}
Let $n\geq 3$ be an arbitrary integer. For every reduced expression $\underline w_0$, the weight vector ${\bf w}_{\underline w_0}$ lies 
in the relative interior of a maximal cone in $\trop(\Flag_n)$. 

In particular, if the string cone satisfies MP this vector lies in the relative interior of the prime cone $C$, whose associated polytope is combinatorially equivalent to the string polytope $Q_{\underline w_0}(\rho)$.
\end{conjecture}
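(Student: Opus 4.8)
The construction of $\mathbf w_{\underline w_0}$ in Definition~\ref{str.wt.vec} is the composite $e\circ\val_{\underline w_0}$ of Caldero's string valuation $\val_{\underline w_0}$ on the total coordinate ring $A\cong\Bbbk[p_J]/I_n$ of $\Flag_n$ with the $2$-adic linear form $e$, where $\val_{\underline w_0}(p_J)=(\mathbf m_J,\deg p_J)$ and $\operatorname{gr}_{\val_{\underline w_0}}(A)=\Bbbk[S_{\underline w_0}]$ for the string-cone semigroup $S_{\underline w_0}=\{(\mathbf m,\lambda):\mathbf m\in Q_{\underline w_0}(\lambda)\cap\ZZ^N\}$. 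The plan is first to observe that membership $\mathbf w_{\underline w_0}\in\trop(\Flag_n)$ is essentially formal: since $e$ is linear and $\val_{\underline w_0}$ is a valuation (hence additive on products), for every Pl\"ucker monomial $\mathbf p^{\mathbf u}$ the $\mathbf w_{\underline w_0}$-weight $\mathbf w_{\underline w_0}\cdot\mathbf u$ equals $(e\circ\val_{\underline w_0})(\mathbf p^{\mathbf u})$, so the image of $\mathbf p^{\mathbf u}$ in $\operatorname{gr}_{\mathbf w_{\underline w_0}}(A)$ is nonzero; hence $\init_{\mathbf w_{\underline w_0}}(I_n)$ contains no monomial. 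The real content is that the cone of $\trop(\Flag_n)$ whose relative interior contains $\mathbf w_{\underline w_0}$ is \emph{maximal}, and — when the string cone satisfies MP — that it is \emph{prime} with associated polytope combinatorially $Q_{\underline w_0}(\rho)$.

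For the refined statement I would invoke Sagbi/Khovanskii theory (\cite[\S5]{KM16}, \cite[Theorem~1.1]{An13}). The steps are: (i) show that when $Q_{\underline w_0}(\rho)$ satisfies MP the Pl\"ucker coordinates $\{p_J\}$ form a finite Khovanskii basis for $\val_{\underline w_0}$ — equivalently, $S_{\underline w_0}$ is generated as a monoid by the elements $(\mathbf m_J,\deg p_J)$, which is exactly weak MP at $\rho$ together with the Minkowski property of the fundamental string polytopes recorded in Proposition~\ref{thm:stringpoly}; (ii) show that passing from $\val_{\underline w_0}$ to $e\circ\val_{\underline w_0}$ introduces no cancellation of leading forms, which follows once $e$ is injective on each finite graded component $Q_{\underline w_0}(\lambda)\cap\ZZ^N$. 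Granting (i)--(ii), $\operatorname{gr}_{\mathbf w_{\underline w_0}}(A)=\Bbbk[S_{\underline w_0}]$, so $\init_{\mathbf w_{\underline w_0}}(I_n)$ is a prime binomial ideal and, by the correspondence between prime cones of $\trop(X)$ and valuations with finite Khovanskii basis, $\mathbf w_{\underline w_0}$ lies in the relative interior of a maximal prime cone $C$; the polytope attached to $C$ by the recipe in the proof of Theorem~\ref{flag4} is then the convex hull of the multidegree-$\rho$ lattice points of $S_{\underline w_0}$, i.e.\ $Q_{\underline w_0}(\rho)$ up to unimodular equivalence. When MP fails one replaces $S_{\underline w_0}$ by the sub-semigroup generated by the $(\mathbf m_J,\deg p_J)$ and the filtration argument still identifies $\operatorname{gr}_{\mathbf w_{\underline w_0}}(A)$, provided one checks it remains equidimensional, which is what yields maximality (but not primeness) of the cone in that case.

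The main obstacle is establishing (i) and (ii) uniformly in $n$. The Khovanskii/Minkowski statement is genuinely false for some reduced words — it is precisely the obstruction isolating String~4 for $\Flag_4$ — so for the unconditional half of the conjecture one cannot use primeness and must instead control $\dim\operatorname{gr}_{\mathbf w_{\underline w_0}}(A)$ directly, i.e.\ verify that the marking $x_J\mapsto p_J$ produces a degeneration to an equidimensional ring rather than dropping dimension. More seriously, the separation property (ii): the functional $e(\mathbf m)=\sum_i 2^{N-i}m_i$ separates the lattice points of $Q_{\underline w_0}(\lambda)$ for $n=4,5$ (the finite checks behind Proposition~\ref{thm:stringweight}), but for large $n$ and $\lambda=k\rho$ the coordinates of such lattice points grow like $kN$ while consecutive values of $e$ differ by powers of $2$, so collisions are not a priori excluded; ruling them out will probably require a word-dependent rescaling of $e$, or a combinatorial argument exploiting the $\{0,\pm1,\pm2\}$-coefficient structure of the defining inequalities of $\mathcal C_{\underline w_0}$. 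Once these points are settled, the identification of the cone and its polytope follows from Lemma~\ref{lem:multiplicity}, Lemma~\ref{toric:lemma}, and the Minkowski analysis of \S\ref{sec:rep}.
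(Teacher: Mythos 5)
The statement you are proving is stated in the paper as a \emph{conjecture}: the authors offer no proof of it for general $n$, only the computational verification for $n=4,5$ recorded in Proposition~\ref{thm:stringweight} (explicit \emph{Macaulay2} checks that $\init_{{\bf w}_{\underline w_0}}(I_n)$ is binomial, and \emph{polymake} checks of combinatorial equivalence). So there is no paper proof to compare against, and the relevant question is whether your argument closes the conjecture. It does not: you yourself flag the two load-bearing steps as open. Step (ii) is a genuine gap, not a technicality — the whole construction of ${\bf w}_{\underline w_0}$ rests on collapsing the rank-$N$ string valuation to a rank-one weight via $e({\bf m})=\sum_i 2^{N-i}m_i$, and without a proof that this refinement produces no collisions among the exponents occurring in elements of $I_n$ (equivalently, that $\operatorname{gr}_{{\bf w}_{\underline w_0}}(A)$ agrees with $\operatorname{gr}_{\val_{\underline w_0}}(A)$), even the first assertion of the conjecture (membership in the relative interior of a \emph{maximal} cone) is unproved. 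Your fallback for the non-MP case — "verify that the degeneration is equidimensional" — is a restatement of what must be shown, not an argument.

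There is also an unacknowledged imprecision in step (i). Weak MP (Definition~\ref{def:mp}) only decomposes the lattice points of $Q_{\underline w_0}(\rho)$; the Khovanskii-basis property you need is that the $({\bf m}_J,\deg p_J)$ generate the entire $\ZZ^{n-1}$-graded value semigroup $S_{\underline w_0}$, which requires the Minkowski decomposition of lattice points of $Q_{\underline w_0}(\lambda)$ for \emph{all} dominant $\lambda$ (the paper explicitly distinguishes this stronger property in the remark after Definition~\ref{def:mp}). For $n=4,5$ the two happen to sort the reduced words the same way, but that is an empirical observation, not something your argument establishes. The overall architecture — Caldero's filtration, Kaveh--Manon's correspondence between prime cones and valuations with finite Khovanskii basis, and Anderson's degeneration — is the right framework and is consistent with how the paper motivates the conjecture, but as written your text is a research plan with the conjecture's difficulty concentrated in its unproven steps, not a proof.
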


The following example discusses a similar construction of weight vectors for the FFLV polytope.

\begin{example}\label{exp.ffl}
Consider for $\Flag_4$ the sequence of positive roots
\[
S=(\alpha_1+\alpha_2+\alpha_3,\alpha_1+\alpha_2,\alpha_2+\alpha_3,\alpha_1,\alpha_2,\alpha_3).
\]
By \cite[Example~1]{FFL15}, Proposition~\ref{univ.env} is also true for this choice of $S$. More generally speaking, Proposition~\ref{univ.env} holds for every sequence containing all positive roots ordered by height. The \emph{height} of a positive root is the number of simple summands. Such sequences are called \emph{PBW-sequences} with \emph{good ordering} in \cite{FFL15}.

With this choice of $S$ we apply the aformentioned procedure to obtain a unique multi-exponent ${\bf m}_J$ for each Pl\"ucker variable $p_J$. Taking the convex hull of all multi-exponents 
${\bf m}_J$ for $J\subset \{1,\ldots,4\}$ yields the FFLV polytope from Definition~\ref{def:ffl} with respect to the embedding $\Flag_4\hookrightarrow \mathbb P(V(\rho))$. Then we define linear forms
\begin{eqnarray*}
e^{\min}({{\bf m}_J})=m_1+2m_2+m_3+2m_4+m_5+m_6,\\
e^{\rm reg}({\bf m}_J)=3m_1+4m_2+2m_3+3m_4+2m_5+m_6,
\end{eqnarray*}
according to the degrees defined in \cite{FFR15}. 
We obtain in analogy to Definition~\ref{str.wt.vec} the corresponding weight vectors
\begin{eqnarray*}
{\bf w}^{\min}=(0,2,2,1,0,1,1,2,1,2,0,1,1,1),\\
{\bf w}^{\rm reg}=(0,3,4,3,0,2,2,4,3,5,0,1,2,3).
\end{eqnarray*}

A computation in $\emph{Macaulay2}$ shows that $\init_{{\bf w}^{\min}}(I_4)=\init_{{\bf w}^{\rm reg}}(I_4)$
is a binomial prime ideal. Hence ${\bf w}^{\min}$ and ${\bf w}^{\rm reg}$ lie in the relative interior of the same prime cone $C\subset \trop(\Flag_4)$. Using \emph{polymake}~\cite{GJ00} we verify that the polytope associated to $C$ is combinatorially equivalent to the FFLV polytope $P(\rho)$. We did the analogue of this computation for $\Flag_5$ and the outcome is the same, $\init_{{\bf w}^{\min}}(I_5)=\init_{{\bf w}^{\rm reg}}(I_5)=\init_C(I_5)$ with the polytope associated to $C$ being combinatorially equivalent to $P(\rho)$. The weight vectors ${\bf w}^{min}$ and ${\bf w}^{reg}$ for $\Flag_5$ can be found in Table~\ref{tab:stringweight5} in the appendix.
\end{example}

\section{Toric degenerations from non-prime cones}\label{Algorithmic approach}

As we have seen in \S\ref{sec:3}, not all maximal cones in the tropicalization of a variety give rise to prime initial ideals and hence to toric degenerations. In fact, there may also be tropicalizations without prime cones (see Example \ref{ex:badCone}).
Let $X$ be a subvariety of a toric variety $Y$. 
In this section, we give a recursive procedure (Procedure \ref{alg:Kh_Basis}) to compute a new embedding $X'$ of $X$ in case $\trop(X)$ has non-prime cones. Let $C$ be a non-prime cone. If the algorithm terminates, the new variety $X'$ has more prime cones than $\trop(X)$ and at least one of them is projecting onto $C$. 
We apply this procedure to $\Flag_4$ and compare the new toric degenerations with those obtained so far (see Proposition~\ref{prop:output}). The procedure terminates for $\Flag_4$, but we are still investigating the conditions for which this is true in general.

\medskip

\begin{algorithm}[h]
\SetAlgorithmName{Procedure}{} 
\KwIn{\medskip {\bf Input:\ }  $A = \mathbb C[x_0,\ldots,x_n]/I$, where $\mathbb C[x_0,\ldots,x_n]$ is the total coordinate ring of the toric variety $Y$ and $I $ defines the subvariety $V(I)\subset Y$,
$C$ a non-prime cone of $\trop(V(I))$.
}
\BlankLine

{\bf Initialization:}\\
Compute the primary decomposition of $\init_C(I)$;\\
$I(W_C)=$ unique prime toric component in the decomposition;\\
$G=$ minimal generating set of $I(W_C)$.\\
Compute a list of binomials $L_C=\{f_{1},\ldots,f_{s}\}$ in $G$, which are not in $\init_C(I)$;\\
$A'=\mathbb C[x_0, \dots, x_n,y_{1},\ldots,y_{s}]/I'$ with $I'=I+\langle y_{1}-f_{1},\ldots,y_{s}-f_{s}\rangle $;\\
$V(I')$ subvariety of $Y'$ whose total coordinate ring is $\mathbb C[Y]:=\mathbb C[x_0, \dots, x_n,y_{1},\ldots,y_{s}]$.\\
Compute $\trop(V(I'))$;\\
\For {all  prime cones $C'\in\trop(V(I'))$} {

\If{    $\pi(C')$ is contained in the relative interior of $C$ }{{\bf Output:} The algebra $A'$   
and the ideal $\init_{C'}(I')$ of a  toric degeneration of $V(I')$.}\Else{
Apply the procedure again  to $A'$  and $C'$.}
}
\label{alg:Kh_Basis}
\caption{Computing  new embeddings of  the variety $X$ in case $\trop(X)$ contains non-prime cones}
\end{algorithm}

We now explain Procedure~\ref{alg:Kh_Basis}. Consider a toric variety $Y$ whose total coordinate ring is $\mathbb C[x_0, \dots, x_n]$ with associated $\mathbb Z^k$-degree $\deg:\mathbb Z^{n+1}\to\mathbb Z^k$. Let $X$ be the subvariety of $Y$ associated to an ideal $I\subset \mathbb C[x_0, \dots, x_n]$, where the Krull dimension of $A=\mathbb C[x_0, \dots, x_n]/I$ is $d$. Denote by $\trop(V(I))$ the tropicalization of $X$ intersected with the torus of $Y$. Suppose there is a non-prime cone $C\subset \trop(V(I))$ with multiplicity one. By  Lemma \ref{toric:lemma}, we have that   $I(W_C)$ is the unique  toric ideal in the primary decomposition of $\init_{C}(I)$, hence $\init_{C}(I)\subset I(W_C)$. We can compute $I(W_C)$ using the function  $\mathtt{primaryDecomposition}$ in \emph{Macaulay2}.
Fix a minimal binomial generating set $G$ of $I(W_C)$, and let $L_C=\{f_1, \ldots, f_{s}\}$ be the set consisting of binomials in $G$, which are not in $\init_{C}(I)$. 
By Hilbert's Basis Theorem, ${s}$ is a finite number. The absence of these binomials in $\init_C(I)$ is the reason why the initial ideal is not equal to $I(W_C)$.
We introduce new variables $\{y_{1},\ldots,y_{{s}}\}$  and consider the algebra $A' = \mathbb C[x_0, \ldots, x_n,y_{1},\ldots,y_{s}]/I'$, where 
\[
I'=I+\langle y_{1}-f_{1},\ldots,y_{s}-f_{s}\rangle.
\]  
The ideal $I'$  is a homogeneous ideal in $\mathbb{C}[x_0, \ldots, x_n, y_{1},\ldots,y_{s}]$ graded by  
\[
(\deg(x_0), \ldots, \deg(x_n),\deg(f_1),\ldots,\deg(f_s)).
\]
The new variety $V(I')$ is a subvariety of a toric variety $Y'$, which has total coordinate  $\mathbb C[Y']:=\mathbb C[x_0, \ldots, x_n,y_{1},\ldots,y_{s}]$.
For example, if $V(I)$ is a subvariety of a projective space then $V(I')$ is contained in a weighted projective space.

Since the algebras $A$ and $A'$ are isomorphic as graded algebras, the varieties $V(I)$ and $V(I')$ are isomorphic. 
We have a monomial map 
\[
\pi: \mathbb C[x_0,\ldots,x_n]/I\to \mathbb C[x_0, \ldots, x_n,y_{1},\ldots,y_{s}]/I'
\]
inducing a surjective map $\trop(\pi):\trop(V(I'))\to \trop(V(I))$ (see \cite[Corollary 3.2.13]{M-S}). The map $\trop(\pi)$  is the projection onto the first $n$ coordinates.
Suppose there exists a prime cone $C'\subset \trop(V(I'))$, whose projection has a non-empty intersection with the relative interior of $C$. Then by construction we have $\init_C(I)\subset \init_{C'}(I') \cap \mathbb{C}[x_0,\ldots,x_n]$ and the procedure terminates. In this way we obtain a new initial ideal $\init_{C'}(I')$ which is toric and hence gives a new toric degeneration of the variety $V(I')\cong V(I)$.
If only non-prime cones are projecting to $C$ then run this procedure again with $A'$ and $ C'$, where the latter is a maximal cone of $\trop(V(I'))$, which projects to $C$.
We can then repeat the procedure starting from a different non-prime cone.

The  function to apply Procedure \ref{alg:Kh_Basis} is  \texttt{findNewToricDegenerations} and it is part of the package $\mathtt{ToricDegenerations}$. This will compute only one re-embedding for each non-prime cone. It is possible to use \texttt{mapMaximalCones} to obtain the image of $\trop(V(I'))$ under the map $\pi$.

\begin{remark}
If $f_i$ is a polynomial in $\Bbbk[x_0,x_1,\ldots,x_n]$ with the standard grading and  $ \deg(f_i) > 1$, then we need to
homogenize the ideal $I'$ before computing the tropicalization with \emph{Gfan}. This is done by adding a new variable $h$. The homogenization of $I'$ with respect to $h$ is denoted by $I'_{proj}\subseteq \Bbbk[x_0,\ldots,x_n,y_1,\ldots,y_s,h]$. Then by \cite[Proposition 2.6.1]{M-S} for every $\bf {w}$ in $\mathbb {R}^{n+s+2}$ the ideal $\init_{\bf w}(I')$ is obtained from $\init_{(\bf{w},\text{0})}(I'_{proj})$ by setting $h=1$.
\end{remark}

If the cone $C$ is prime, we can construct a valuation $\val_C$ on $\Bbbk[x_0,\ldots,x_n]/I$ in the following way. Consider the matrix $W_C$ in Equation~(\ref{def:W}). For monomials $m_i = c {\bf x} ^{{\bf {\alpha}}_i}\in \Bbbk[x_0,\ldots x_n] $ define \begin{equation}\val(m_i) = W_C {\bf \alpha}_i\quad \text{and}\quad \val (\sum_i m_i) = \min_i \{\val(m_i)\},\end{equation} where the minimum on the right side is taken with respect to the lexicographic order on $(\ZZ^d, +)$. This is a valuation on $\Bbbk[x_0,\dots,x_n]$ of rank equal to the Krull dimension of $A$ for every cone $C$. 
Composing $\val $ with the quotient morphism $p:\Bbbk[x_0,\ldots,x_n]\to \Bbbk[x_0,\ldots,x_n]/I$ we obtain a map $\val_C$, which is a valuation if and only if the cone $C$ is prime. Moreover, in \cite{KM16} Kaveh and Manon prove that a cone $C$ in $\trop(V(I))$ is prime if and only if $A=\Bbbk[x_0,\ldots,x_n]/I$ has a  finite \textit{Khovanskii basis} for the valuation $\val_C$ constructed from the cone $C$. Recall that a Khovanskii basis for an algebra $A$ with valuation $\val_C$ is a subset $B$ of $A$ such that $\val_C(B)$ generates the value semigroup $S(A, \val_C)=\{\val_C(f): f \in A\setminus\{0\}\}$.

\medskip

Procedure \ref{alg:Kh_Basis} can be interpreted as finding an extension $\val_{C'}$ of $\val_C$  so that $A'$ has finite Khovanskii basis with respect to $\val_{C'}$. The  Khovanskii basis is given by the images of $x_0, \dots, x_n,y_1, \dots ,y_s$ in $A'$. 
We illustrate the procedure in  the following  example. 

\begin{example}
\label{ex:badCone}
Consider the algebra $A = \mathbb C[x, y, z]/ \langle xy+xz+yz\rangle $. The tropicalization of $V(\langle xy+xz+yz\rangle)\subset \mathbb P^2$ has three maximal cones. The corresponding initial ideals are $\langle xz+yz\rangle,\langle xy+yz\rangle$ and $\langle xy+xz\rangle$, none of which is prime. Hence they do not give rise to toric degenerations. The matrices associated to each cone are
\[
{W_{C_1}} = \begin{bmatrix} 
0 &\ \ 0 & -1 \\
1 &\ \ 1 &\ \ 1  
\end{bmatrix},\quad
{W_{C_2}} = \begin{bmatrix} 
0 &-1 & \ \ 0 \\
1 &\ \ 1 &\ \ 1  
\end{bmatrix}\quad\text{and}\quad
{W_{C_3}} = \begin{bmatrix} 
-1 &\ \ 0 & \ \ 0 \\
\ \ \ 1 & \ 1 & \ 1  
\end{bmatrix}.
\]
We now apply Procedure \ref{alg:Kh_Basis} to the cone $C_1$. The initial ideal associated to $C_1$ is generated by $xz+yz$. In this case $\init_{C_1}(I)=\langle z \rangle \cdot  \langle x+y\rangle$ hence for the missing binomial $x+y$ we adjoin a new variable $u$ to $ \mathbb C[x, y, z]$  and the new relation $u-x-y$ to $I$. 
We have  
\[
I'=\langle xy+xz+yz,u-x-y\rangle\ \text{and}\ A' = \mathbb C[x, y,z,u]/I'
\]
with $V(I')$ a subvariety of $\mathbb P^3$.
After computing the tropicalization of $V(I')$ we see that there exists a prime cone $C'$ such that  $\pi(C')=C$. The matrix $W_{C'}$ associated to the cone $C'$ is
\[
{W'} = \begin{bmatrix} 
0 &\ \ 0 & -1 &\  1\\
1 &\ \ 1 &\ \ 1 &\ 1
\end{bmatrix}.
\]
The initial ideal $\init_{C'}(I')$ gives a toric degeneration of $V(I')$. The image of the set $\{ x, y, z,u\}$ in $A'$ is a Khovanskii basis for $S(A', \val_{C'})$. Repeating this process for the cones $C_2$ and $C_3$ of $\trop(V(xy+xz+yz))$, we get  prime cones $C'_2$ and $C'_3$ whose projections  are $C_2$ and $C_3$ respectively. 
Hence there is a valuation with finite Khovanskii basis and a corresponding toric degeneration for every maximal cone.
\end{example}

We now apply Procedure~\ref{alg:Kh_Basis}  to $\trop(\Flag_4)$.

\begin{proposition}\label{prop:output}
Each of the non-prime cones of $\trop(\Flag_4)$ gives rise to three toric degenerations, which are not isomorphic to any degeneration coming from the prime cones of $\trop(\Flag_4)$. Moreover, two of the three new polytopes are combinatorially equivalent to the previously missing string polytopes for $\rho$ in the class String 4.

\end{proposition}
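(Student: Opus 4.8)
The proof is computational, carried out with the function \texttt{findNewToricDegenerations} of the \texttt{ToricDegenerations} package. The plan is to run Procedure~\ref{alg:Kh_Basis} on a single representative of the unique $S_4\rtimes\mathbb Z_2$-orbit of non-prime cones of $\trop(\Flag_4)$ (the orbit of size $6$, i.e.\ orbit $5$ in Table~\ref{fig:num}, whose tropical configuration is described in Proposition~\ref{6config}) and then transport the conclusions across the orbit. So first I would fix such a cone $C$. Since all maximal cones of $\trop(\Flag_4)$ have multiplicity one (Theorem~\ref{flag4}), Lemma~\ref{lem:multiplicity} and Lemma~\ref{toric:lemma} guarantee that $\init_C(I_4)$ has a unique toric component $I(W_C)$; I would compute its primary decomposition, a minimal binomial generating set $G$ of $I(W_C)$, and the list $L_C=\{f_1,\dots,f_s\}$ of generators in $G$ not contained in $\init_C(I_4)$. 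Adjoining variables $y_1,\dots,y_s$ and the relations $y_i-f_i$ produces $I'$ and $A'=\mathbb C[p_J,y_i]/I'$, the coordinate ring of a subvariety $V(I')\cong\Flag_4$ of a toric variety $Y'$, graded by the matrix $D$ from~\eqref{degree} together with the multidegrees $\deg(f_i)$.

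Next I would compute $\trop(V(I'))$ with \emph{Gfan}, again exploiting the $S_4\rtimes\mathbb Z_2$-symmetry and homogenizing if needed as in the Remark following Procedure~\ref{alg:Kh_Basis}, and then use \texttt{mapMaximalCones} to determine which prime cones $C'$ of $\trop(V(I'))$ have image $\pi(C')$ under the coordinate projection $\trop(\pi)$ meeting the relative interior of $C$. The claim is that there are exactly three such cones $C'_1,C'_2,C'_3$, so that in particular Procedure~\ref{alg:Kh_Basis} terminates for $C$ after a single re-embedding; for each of them $\init_{C'_i}(I')$ is a toric ideal with $\init_C(I_4)\subseteq \init_{C'_i}(I')\cap\mathbb C[p_J]$, hence defines a toric degeneration of $\Flag_4$.

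Then, for each $C'_i$, I would compute the polytope $P_i$ of the normalization of the corresponding special fibre by the recipe in the proof of Theorem~\ref{flag4}, adapted to the grading of $Y'$: determine the lattice $M_i=\mathbb Z\mathcal B_{C'_i}$, order the rows of the generating matrix so that the first ones reproduce the grading matrix of $Y'$, and take the convex hull of the lattice points of $\mathbb N\mathcal B_{C'_i}$ sitting on the relevant multidegree hyperplanes. Using \emph{polymake} I would compute the $F$-vectors of $P_1,P_2,P_3$ and check that none is combinatorially equivalent to the four polytopes of Table~\ref{fig:num}, which already rules out unimodular equivalence of the associated normal fans and hence shows the three degenerations are new; and that exactly two of $P_1,P_2,P_3$ are combinatorially equivalent to the String~4 polytope $Q_{\underline w_0}(\rho)$ with $\underline w_0=s_1s_3s_2s_3s_1s_2$, the class appearing in Table~\ref{tab:string4} and Table~\ref{tab:f-vector4} that was previously absent because String~4 fails the Minkowski property~MP. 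Finally, since the six non-prime cones are $S_4\rtimes\mathbb Z_2$-conjugate and the procedure and all invariants above are equivariant, each of them produces the same three toric degenerations up to isomorphism.

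The main obstacle is termination: one must verify that among the maximal cones of $\trop(V(I'))$ projecting onto $C$ there really is a prime one, so that no further re-embedding (the \textbf{Else} branch of Procedure~\ref{alg:Kh_Basis}) is triggered — this is exactly the general question the authors leave open, and for $\Flag_4$ it can only be settled by the explicit \emph{Gfan} computation. A secondary difficulty is the polytope step: the ambient $Y'$ is no longer a product of ordinary projective spaces, so the lattice-point extraction must correctly account for the non-standard multigrading; and while matching $F$-vectors already forces non-equivalence with the prime-cone polytopes, establishing combinatorial equivalence with the String~4 polytope requires exhibiting an explicit combinatorial isomorphism in \emph{polymake} rather than merely comparing $F$-vectors.
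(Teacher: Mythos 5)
Your proposal is correct and follows essentially the same route as the paper: apply Procedure~\ref{alg:Kh_Basis} to one representative of the single $S_4\rtimes\mathbb Z_2$-orbit of non-prime cones (the paper adjoins the one missing binomial $p_2p_{134}-p_1p_{234}$, computes $\trop(V(I'))$ with $105$ maximal cones of which $99$ are prime, finds exactly three prime cones projecting onto $C$, and compares the resulting polytopes in \emph{polymake} with the prime-cone polytopes and the String~4 polytope), then transports the result across the orbit by symmetry. You also correctly identify that termination and the combinatorial-equivalence checks are settled only by the explicit computation, exactly as in the paper.
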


\begin{proof}
By Theorem \ref{flag4} we know that $\trop(\Flag_4)$ has six non-prime cones forming one $S_4\rtimes\mathbb Z_2$-orbit. Hence, we apply Procedure~\ref{alg:Kh_Basis} to only one non-prime cone. The result for the other non-prime cones will be the same up to symmetry. In particular, the obtained toric degenerations from one cone will be isomorphic to those coming from another cone. 
We describe the results for the maximal cone $C$ associated to the  initial ideal $\init_C(I_4)$ defined by the following binomials: 
\begin{center}
\begin{tabular}{llll}

$p_4p_{123}-p_{3}p_{124},$ &$p_{24}p_{134}-p_{14}p_{234}$, &$p_{23}p_{134}-p_{13}p_{234}$,
&$p_{2}p_{14}-p_{1}p_{24}$, \\ $p_{2}p_{13}-p_{1}p_{23}$, &$p_{24}p_{123}-p_{23}p_{124}$,& $p_{14}p_{123}-p_{13}p_{124}$, &$p_{4}p_{23}-p_{3}p_{24}$\\ 
&$p_{4}p_{13}-p_{3}p_{14}$, \text{and}&$p_{14}p_{23}-p_{13}p_{24}.$
\end{tabular}
\end{center}
We define the ideal $I'=I_4+\langle w-p_{2}p_{134}+p_{1}p_{234}\rangle$.  
The grading on the variables $p_1,\ldots,p_{234}$ and $w$  
is given by the matrix 
\setcounter{MaxMatrixCols}{15}
\[
\begin{bmatrix}
 1 & 1 & 1 & 1 & 0 & 0 & 0 & 0 & 0 & 0 & 0 & 0 & 0 & 0 &1\\
 0 & 0& 0 &0 &1 &1&1&1&1&1&0&0&0&0&0\\
 0&0&0&0&0&0&0&0&0&0&1&1&1&1&1
\end{bmatrix}.
\]
It extends the grading on the variables $p_1,\ldots,p_{234}$ given by the matrix $D$ in (\ref{degree}).
The tropical variety $\trop(V(I'))$ has $105$ maximal cones, $99$ of which are prime. Among them we can find three maximal prime cones, which are mapped to $C$ by $\trop(\pi)$ (see Figure \ref{figure:2}). We compute the polytopes associated to the normalization of  these three toric degenerations by applying the same methods as in Theorem \ref{flag4}. Using \emph{polymake} we check that two of them are combinatorially equivalent to the string polytopes for $\rho$ in the class String 4. Moreover, none of them is combinatorially equivalent to any polytope coming from prime cones of $\trop(\Flag_4)$, hence they define different toric degenerations. \qed
\end{proof}

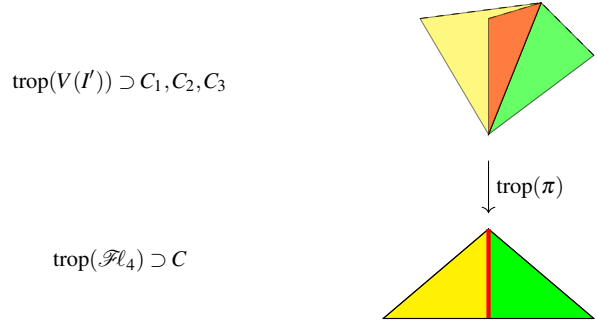
\begin{figure}
\begin{center}
\begin{tikzpicture}[scale=.7]
\node at (-4,4.5) {$\trop(V(I'))\supset C_1,C_2,C_3$};

\draw[-][dashed] (1.7,5.7) -- (4,6);
\draw[-][dashed] (3,5.7) -- (4,6);
\draw[-][dashed] (5,5) -- (4,6);
\draw[-] (3,3.5) -- (4,6);
\draw[fill= green,opacity=0.6] (3,3.5) -- (4,6) -- (5,5) --cycle;

\draw[fill= yellow,opacity=0.5] (1.7,5.7) -- (4,6) -- (3,3.5) --cycle;
\draw[fill= red,opacity=0.5] (3,5.7) -- (4,6) -- (3,3.5) --cycle;
\draw[->] (3,3) -- (3,2);
\node[right] at (3,2.5) {$\trop(\pi)$};
\node at (-4,1.1) {$\trop(\Flag_4)\supset C$};
\draw[fill=yellow] (1,0) -- (3,1.7) -- (3,0) --cycle;
\draw[fill= green] (5,0) -- (3,1.7) -- (3,0) --cycle;
\draw[-] (1,0) -- (3,1.7);
 \draw[-] (5,0) -- (3,1.7);
 \draw[-][ultra thick ,red] (3,0) -- (3,1.7);
\draw[-] (1,0) -- (5,0);
    \end{tikzpicture}
\caption{The three triangles above represent  the three cones in $\trop(V(I'))$ which project down to the non-prime cone $C$ in $\trop(\Flag_4)$.}\label{figure:2}
\end{center}
\end{figure}

\begin{remark}
Procedure~\ref{alg:Kh_Basis} could be applied also to $\Flag_5$ but we have not been able to do so. In fact, the tropicalization for $\trop(V(I_5'))$ did not terminate since the computation can not be simplified by symmetries.  \end{remark}

\begin{acknowledgement}
This article was initiated during the Apprenticeship
 Weeks (22 August-2 September 2016), led by Bernd Sturmfels, as part of the Combinatorial Algebraic Geometry Semester at the Fields Institute. The authors are  grateful to the Max Planck Institute MiS Leipzig, where  part of this project was carried out. We are grateful to Diane Maclagan, Kiumars Kaveh, and Kristin Shaw for inspiring conversations. We also would like to thank Diane Maclagan, Yue Ren and five anonymous referees for their comments on an earlier version of this manuscript. Further, L.B. and F.M. would like to thank Ghislain Fourier and Xin Fang for many inspiring discussions. K.M would like to express her gratitude to D\'aniel Jo\'o for many helpful conversations. F.M. was supported by a postdoctoral fellowship from the Einstein Foundation Berlin. S.L. was supported by EPSRC grant 1499803.
\end{acknowledgement}

\bibliographystyle{plain} 
\bibliography{Trop.bib}

\section*{Appendix}

In this Appendix we provide numerical evidence of our computations.  Table~\ref{nonprime}  contains data on the non-prime maximal cones of $\trop(\Flag_5)$. In Table~\ref{tab:f-vector5} there is information on the polytopes obtained from maximal prime cones of $\trop(\Flag_5)$. This includes the F-vectors, combinatiral equivalences among the polytopes, and between those and the string polytopes, resp. FFLV polytope, for $\rho$.
Lastly Table~\ref{tab:stringweight5} contains information on the string polytopes and FFLV polytope for $\Flag_5$, such as the weight vectors constructed in \S\ref{string:weight}, primeness of the initial ideals with respect to these vectors, and the MP property.  

\section*{Algebraic and combinatorial invariants of $\trop(\Flag_5)$}\label{polyFlag5}

Below we collect in a table all the information about the non-prime initial ideals of $\Flag_5$ up to symmetry.

\begin{table}
\begin{center}
\begin{tabular}{|l|l|l|l|l|l|}
 \hline
Number of Orbits & $\#$Generators\\
 \hline
30&   69\\
267  &66\\
37  &68\\
11  &70\\
10  &71\\
2&  73\\
\hline
\end{tabular}
\caption{Data for non-prime initial ideals of $\Flag_5$.}\label{nonprime}
\end{center}
\end{table}

The following table shows the F-vectors of the polytopes associated to maximal prime cones of $\trop(\Flag_5)$ for one representative in each orbit. The last column contains information on the existence of a combinatorial equivalence between these polytopes and the string polytopes resp. FFLV polytope for $\rho$. The initial ideals are all \textit{Cohen-Macaulay}.

\begin{center}
\begin{longtable}{| l | l | l |}
\hline
Orbit & F-vector & Combinatorial equivalences \\
\hline \endhead
0  &  475 2956 8417 14241 15690 11643 5820 1899 374 37 & \\ 
\hline
1  &  456 2799 7843 13023 14038 10159 4938 1565 301 30 & \\ 
\hline
2  &  425 2573 7108 11626 12333 8779 4201 1316 253 26 & \\ 
\hline
3  &  393 2313 6200 9833 10125 7021 3297 1027 201 22 & \\ 
\hline
4  &  433 2621 7230 11796 12473 8847 4219 1318 253 26 & \\ 
\hline
5  &  435 2630 7246 11810 12479 8848 4219 1318 253 26 & \\ 
\hline
6  &  425 2553 6988 11317 11888 8388 3987 1245 240 25 & \\ 
\hline
7  &  450 2751 7677 12699 13648 9863 4800 1529 297 30 & \\ 
\hline
8  &  435 2630 7246 11810 12479 8848 4219 1318 253 26 & \\ 
\hline
9  &  419 2522 6922 11243 11842 8373 3985 1245 240 25 & \\ 
\hline
10  &  453 2785 7817 12999 14027 10157 4938 1565 301 30 & \\ 
\hline
11  &  463 2885 8237 13987 15474 11532 5788 1895 374 37 & \\ 
\hline
12  &  463 2852 8020 13365 14459 10501 5121 1627 313 31 & \\ 
\hline
13  &  457 2840 8078 13638 14954 10996 5413 1726 330 32 & \\ 
\hline
14  &  454 2819 8016 13540 14870 10968 5427 1744 337 33 & \\ 
\hline
15  &  445 2748 7770 13050 14254 10464 5161 1658 322 32 & \\ 
\hline
16  &  441 2681 7438 12228 13056 9369 4525 1430 276 28 & \\ 
\hline
17  &  440 2704 7602 12684 13752 10014 4897 1560 301 30 & \\ 
\hline
18  &  471 2923 8298 13995 15369 11369 5667 1845 363 36 & \\ 
\hline
19  &  464 2883 8200 13861 15258 11313 5651 1843 363 36 & \\ 
\hline
20  &  467 2911 8309 14097 15574 11586 5804 1897 374 37 & \\ 
\hline
21  &  461 2876 8225 13993 15509 11575 5814 1903 375 37 & \\ 
\hline
22  &  397 2363 6416 10313 10755 7536 3561 1109 215 23 & \\ 
\hline
23  &  437 2669 7447 12319 13236 9556 4642 1475 286 29 & \\ 
\hline
24  &  425 2553 6988 11317 11888 8388 3987 1245 240 25 & \\ 
\hline
25  &  415 2498 6861 11158 11772 8339 3976 1244 240 25 & \\ 
\hline
26  &  470 2942 8436 14377 15944 11889 5955 1939 379 37 & \\ 
\hline
27  &  460 2856 8109 13656 14929 10944 5374 1712 328 32 & \\ 
\hline
28  &  449 2741 7634 12594 13487 9702 4695 1486 287 29 & \\ 
\hline
29  &  427 2592 7181 11778 12523 8926 4270 1334 255 26 & \\ 
\hline
30  &  425 2573 7108 11626 12333 8779 4201 1316 253 26 & FFLV \\ 
\hline
31  &  443 2708 7557 12495 13411 9667 4686 1485 287 29 & \\ 
\hline
32  &  397 2363 6416 10313 10755 7536 3561 1109 215 23 &  S22\\ 
\hline
33  &  425 2553 6988 11317 11888 8388 3987 1245 240 25 & \\ 
\hline
34  &  419 2522 6922 11243 11842 8373 3985 1245 240 25 & \\ 
\hline
35  &  405 2407 6518 10442 10851 7578 3571 1110 215 23 & \\ 
\hline
36  &  401 2387 6477 10398 10825 7570 3570 1110 215 23 & \\ 
\hline
37  &  368 2154 5755 9111 9373 6497 3052 953 188 21 & S21\\ 
\hline
38  &  379 2214 5892 9280 9494 6547 3063 954 188 21 & S27, S28\\ 
\hline
39  &  393 2313 6200 9833 10125 7021 3297 1027 201 22 & \\ 
\hline
40  &  358 2069 5453 8516 8653 5941 2778 870 174 20 & S1, S18, S26, S29 (Gelfand-Tsetlin)\\ 
\hline
41  &  459 2851 8111 13720 15118 11223 5614 1834 362 36 & \\ 
\hline
42  &  467 2913 8322 14133 15629 11636 5831 1905 375 37 & \\ 
\hline
43  &  423 2562 7083 11596 12313 8772 4200 1316 253 26 & \\ 
\hline
44  &  425 2573 7108 11626 12333 8779 4201 1316 253 26 & S24 \\ 
\hline
45  &  397 2363 6416 10313 10755 7536 3561 1109 215 23 & S23 \\ 
\hline
46  &  461 2876 8225 13993 15509 11575 5814 1903 375 37 & \\ 
\hline
47  &  400 2366 6377 10175 10546 7363 3480 1089 213 23 & \\ 
\hline
48  &  393 2313 6200 9833 10125 7021 3297 1027 201 22 & \\ 
\hline
49  &  393 2313 6200 9833 10125 7021 3297 1027 201 22 & \\ 
\hline
50  &  379 2214 5892 9280 9494 6547 3063 954 188 21 & S2, S19\\ 
\hline
51  &  426 2599 7257 12034 12981 9420 4602 1470 286 29 & \\ 
\hline
52  &  428 2594 7176 11761 12514 8947 4307 1359 263 27 & \\ 
\hline
53  &  419 2522 6922 11243 11842 8373 3985 1245 240 25 & \\ 
\hline
54  &  466 2917 8371 14288 15879 11870 5960 1944 380 37 & \\ 
\hline
55  &  443 2729 7692 12867 13982 10197 4987 1585 304 30 & \\ 
\hline
56  &  453 2787 7826 13011 14021 10122 4895 1539 293 29 & \\ 
\hline
57  &  469 2926 8358 14188 15679 11663 5839 1906 375 37 & \\ 
\hline
58  &  458 2825 7958 13286 14398 10472 5113 1626 313 31 & \\ 
\hline
59  &  472 2949 8435 14335 15854 11796 5902 1923 377 37 & \\ 
\hline
60  &  440 2704 7602 12684 13752 10014 4897 1560 301 30 & \\ 
\hline
61  &  472 2967 8561 14720 16525 12526 6410 2144 432 43 & \\ 
\hline
62  &  457 2842 8099 13726 15153 11266 5640 1842 363 36 & \\ 
\hline
63  &  465 2902 8296 14096 15588 11594 5795 1884 368 36 & \\ 
\hline
64  &  459 2851 8111 13720 15118 11223 5614 1834 362 36 & \\ 
\hline
65  &  428 2608 7269 12028 12946 9377 4576 1462 285 29 & \\ 
\hline
66  &  441 2681 7438 12228 13056 9369 4525 1430 276 28 & \\ 
\hline
67  &  418 2510 6876 11157 11753 8321 3969 1243 240 25 & \\ 
\hline
68  &  406 2442 6713 10943 11587 8245 3950 1241 240 25 & \\ 
\hline
69  &  373 2199 5926 9474 9849 6897 3267 1024 201 22 & \\ 
\hline
70  &  427 2586 7144 11681 12383 8806 4209 1317 253 26 & \\ 
\hline
71  &  451 2781 7840 13111 14243 10390 5089 1623 313 31 & \\ 
\hline
72  &  440 2704 7602 12684 13752 10014 4897 1560 301 30 & \\ 
\hline
73  &  406 2442 6713 10943 11587 8245 3950 1241 240 25 & \\ 
\hline
74  &  448 2764 7800 13061 14208 10377 5087 1623 313 31 & \\ 
\hline
75  &  462 2873 8181 13846 15258 11321 5656 1844 363 36 & \\ 
\hline
76  &  457 2842 8099 13726 15153 11266 5640 1842 363 36 & \\ 
\hline
77  &  469 2927 8364 14203 15699 11678 5845 1907 375 37 & \\ 
\hline
78  &  454 2802 7903 13216 14348 10453 5110 1626 313 31 & \\ 
\hline
79  &  451 2787 7879 13221 14419 10565 5200 1667 323 32 & \\ 
\hline
80  &  441 2705 7584 12611 13622 9885 4823 1537 298 30 & \\ 
\hline
81  &  454 2803 7914 13263 14455 10598 5231 1687 330 33 & \\ 
\hline
82  &  441 2697 7532 12465 13391 9660 4685 1485 287 29 & \\ 
\hline
83  &  445 2721 7593 12550 13461 9694 4694 1486 287 29 & \\ 
\hline
84  &  441 2697 7532 12465 13391 9660 4685 1485 287 29 & \\ 
\hline
85  &  445 2725 7617 12611 13546 9764 4728 1495 288 29 & \\ 
\hline
86  &  397 2363 6416 10313 10755 7536 3561 1109 215 23 & \\ 
\hline
87  &  368 2154 5755 9111 9373 6497 3052 953 188 21 & S5, S31 \\ 
\hline
88  &  452 2801 7946 13385 14654 10771 5309 1699 327 32 & \\ 
\hline
89  &  430 2624 7318 12097 12974 9329 4497 1411 269 27 & \\ 
\hline
90  &  456 2834 8071 13670 15083 11210 5612 1834 362 36 & \\ 
\hline
91  &  432 2633 7332 12104 12975 9341 4521 1430 276 28 & \\ 
\hline
92  &  467 2919 8359 14230 15769 11756 5892 1922 377 37 & \\ 
\hline
93  &  456 2834 8071 13670 15083 11210 5612 1834 362 36 & \\ 
\hline
94  &  426 2597 7244 11998 12926 9370 4575 1462 285 29 & \\ 
\hline
95  &  440 2708 7630 12769 13898 10169 5001 1603 311 31 & \\ 
\hline
96  &  432 2633 7332 12104 12975 9341 4521 1430 276 28 & \\ 
\hline
97  &  412 2479 6810 11083 11707 8306 3967 1243 240 25 & \\ 
\hline
98  &  415 2511 6945 11391 12133 8679 4174 1313 253 26 & \\ 
\hline
99  &  458 2845 8092 13676 15042 11132 5543 1800 353 35 & \\ 
\hline
100  &  437 2669 7447 12319 13236 9556 4642 1475 286 29 & \\ 
\hline
101  &  441 2703 7569 12562 13531 9780 4746 1502 289 29 & \\ 
\hline
102  &  427 2586 7144 11681 12383 8806 4209 1317 253 26 & \\ 
\hline
103  &  419 2522 6922 11243 11842 8373 3985 1245 240 25 & \\ 
\hline
104  &  437 2669 7447 12319 13236 9556 4642 1475 286 29 & \\ 
\hline
105  &  411 2470 6776 11012 11617 8235 3933 1234 239 25 & \\ 
\hline
106  &  413 2483 6808 11043 11606 8177 3871 1201 230 24 & \\ 
\hline
107  &  425 2553 6988 11317 11888 8388 3987 1245 240 25 & \\ 
\hline
108  &  405 2407 6518 10442 10851 7578 3571 1110 215 23 & \\ 
\hline
109  &  405 2427 6638 10751 11296 7969 3785 1181 228 24 & S30 \\ 
\hline
110  &  465 2904 8312 14152 15700 11734 5907 1940 384 38 & \\ 
\hline
111  &  464 2902 8323 14204 15795 11828 5960 1956 386 38 & \\ 
\hline
112  &  438 2690 7559 12608 13667 9952 4868 1552 300 30 & \\ 
\hline
113  &  445 2725 7617 12611 13546 9764 4728 1495 288 29 & \\ 
\hline
114  &  437 2669 7447 12319 13236 9556 4642 1475 286 29 & \\ 
\hline
115  &  411 2470 6776 11012 11617 8235 3933 1234 239 25 & \\ 
\hline
116  &  424 2574 7139 11737 12529 8983 4332 1367 264 27 & \\ 
\hline
117  &  419 2522 6922 11243 11842 8373 3985 1245 240 25 & \\ 
\hline
118  &  401 2387 6477 10398 10825 7570 3570 1110 215 23 & \\ 
\hline
119  &  405 2427 6638 10751 11296 7969 3785 1181 228 24 & S6 \\ 
\hline
120  &  464 2893 8261 14019 15483 11503 5746 1869 366 36 & \\ 
\hline
121  &  454 2806 7928 13283 14448 10543 5159 1641 315 31 & \\ 
\hline
122  &  451 2794 7928 13370 14676 10840 5387 1746 342 34 & \\ 
\hline
123  &  444 2736 7715 12915 14053 10273 5044 1613 312 31 & \\ 
\hline
124  &  466 2909 8318 14138 15644 11650 5837 1906 375 37 & \\ 
\hline
125  &  456 2815 7939 13271 14398 10480 5118 1627 313 31 & \\ 
\hline
126  &  423 2561 7078 11586 12303 8767 4199 1316 253 26 & \\ 
\hline
127  &  429 2580 7064 11429 11972 8402 3959 1221 232 24 & \\ 
\hline
128  &  431 2626 7309 12058 12915 9290 4494 1422 275 28 & \\ 
\hline
129  &  428 2602 7224 11883 12684 9087 4375 1377 265 27 & \\ 
\hline
130  &  443 2727 7679 12831 13927 10147 4960 1577 303 30 & \\ 
\hline
131  &  432 2637 7354 12152 13024 9356 4505 1412 269 27 & \\ 
\hline
132  &  451 2793 7920 13342 14620 10770 5331 1718 334 33 & \\ 
\hline
133  &  434 2632 7273 11879 12557 8883 4210 1301 246 25 & \\ 
\hline
134  &  452 2781 7813 13004 14042 10171 4944 1566 301 30 & \\ 
\hline
135  &  453 2808 7969 13433 14725 10847 5366 1727 335 33 & \\ 
\hline
136  &  451 2794 7928 13370 14676 10840 5387 1746 342 34 & \\ 
\hline
137  &  433 2646 7390 12236 13150 9482 4589 1448 278 28 & \\ 
\hline
138  &  442 2715 7629 12727 13808 10076 4948 1587 309 31 & \\ 
\hline
139  &  432 2633 7332 12104 12975 9341 4521 1430 276 28 & \\ 
\hline
140  &  423 2564 7096 11632 12368 8822 4227 1324 254 26 & \\ 
\hline
141  &  413 2483 6808 11043 11606 8177 3871 1201 230 24 & \\ 
\hline
142  &  427 2594 7196 11827 12614 9031 4347 1369 264 27 & \\ 
\hline
143  &  431 2622 7281 11973 12769 9135 4390 1379 265 27 & \\ 
\hline
144  &  431 2626 7309 12058 12915 9290 4494 1422 275 28 & \\ 
\hline
145  &  410 2459 6725 10881 11411 8029 3802 1183 228 24 & \\ 
\hline
146  &  428 2594 7176 11761 12514 8947 4307 1359 263 27 & \\ 
\hline
147  &  419 2522 6922 11243 11842 8373 3985 1245 240 25 & \\ 
\hline
148  &  451 2781 7840 13111 14243 10390 5089 1623 313 31 & \\ 
\hline
149  &  464 2900 8310 14168 15740 11778 5933 1948 385 38 & \\ 
\hline
150  &  446 2750 7757 12985 14123 10315 5058 1615 312 31 & \\ 
\hline
151  &  420 2541 7021 11496 12218 8719 4184 1314 253 26 & \\ 
\hline
152  &  441 2705 7584 12611 13622 9885 4823 1537 298 30 & \\ 
\hline
153  &  425 2575 7119 11651 12363 8799 4208 1317 253 26 & \\ 
\hline
154  &  448 2764 7801 13067 14223 10397 5102 1629 314 31 & \\ 
\hline
155  &  444 2737 7724 12949 14124 10363 5115 1647 321 32 & \\ 
\hline
156  &  452 2772 7753 12830 13755 9876 4750 1486 282 28 & \\ 
\hline
157  &  442 2706 7565 12529 13460 9696 4684 1473 281 28 & \\ 
\hline
158  &  441 2708 7602 12655 13676 9915 4821 1525 292 29 & \\ 
\hline
159  &  427 2596 7207 11850 12633 9026 4324 1350 257 26 & \\ 
\hline
160  &  452 2781 7813 13004 14042 10171 4944 1566 301 30 & \\ 
\hline
161  &  427 2586 7144 11681 12383 8806 4209 1317 253 26 & \\ 
\hline
162  &  400 2382 6467 10388 10820 7569 3570 1110 215 23 & \\ 
\hline
163  &  448 2764 7800 13061 14208 10377 5087 1623 313 31 & \\ 
\hline
164  &  470 2943 8444 14405 16000 11959 6011 1967 387 38 & \\ 
\hline
165  &  460 2857 8117 13684 14985 11014 5430 1740 336 33 & \\ 
\hline
166  &  418 2530 6996 11466 12198 8712 4183 1314 253 26 & \\ 
\hline
167  &  434 2640 7325 12025 12788 9108 4348 1353 257 26 & \\ 
\hline
168  &  425 2577 7132 11687 12418 8849 4235 1325 254 26 & \\ 
\hline
169  &  425 2581 7160 11772 12564 9004 4339 1368 264 27 & \\ 
\hline
170  &  430 2614 7255 11928 12724 9109 4382 1378 265 27 & \\ 
\hline
171  &  422 2557 7075 11597 12333 8801 4220 1323 254 26 & \\ 
\hline
172  &  411 2470 6772 10988 11556 8150 3863 1200 230 24 &  S7 \\ 
\hline
173  &  427 2586 7144 11681 12383 8806 4209 1317 253 26 & \\ 
\hline
174  &  400 2382 6467 10388 10820 7569 3570 1110 215 23 & \\ 
\hline
175  &  464 2898 8295 14119 15649 11673 5856 1913 376 37 & \\ 
\hline
176  &  442 2718 7644 12754 13822 10056 4911 1562 301 30 & \\ 
\hline
177  &  440 2698 7563 12576 13587 9864 4816 1536 298 30 & \\ 
\hline
178  &  423 2562 7083 11596 12313 8772 4200 1316 253 26 & \\ 
\hline
179  &  452 2781 7813 13004 14042 10171 4944 1566 301 30 & \\ 
\hline
\caption{Orbits of maximal prime cones for $\Flag_5$, the F-vectors of the corresponding polytopes, and combinatorially equivalent string polytopes resp. FFLV polytope.}\label{tab:f-vector5}
\end{longtable}

\end{center}

\section*{Algebraic invariants of the $\Flag_5$ string polytopes}\label{app:string5}

The  table below contains information on the $\Flag_5$ string polytopes and the FFLV polytope for $\rho$. It shows the reduced expressions underlying the string polytopes, whether the polytopes satisfy the weak Minkowski property, the weight vectors constructed in \S\ref{string:weight}, and whether the corresponding initial ideal is prime. The last column contains information on unimodular equivalences among these polytopes. If there is no information in this column this means that there is no unimodular equivalence between this polytope and any other polytope in the table. 
\begin{table}
\begin{center}
\begin{tabular}{| l|  l| l| l|  l| l| l| }
\hline
 &$\underline w_0$ & Normal & MP & Weight vector & Prime & Uni. Eq. \\
\hline
S1& 1213214321 & yes &yes &  \shortstack{$w_1=(0, 512, 384, 112, 0, 256, 96, 768, 608, 480, 0, 64, 320, 832, 15, 14,$\\$
 526, 398, 126, 12, 268, 108, 780, 620, 492, 0, 8, 72, 328, 840)$} & yes & \shortstack{S18, S26,\\ S29} \\ \hline
S2& 1213243212 & yes &yes &  \shortstack{$w_2=(0,512,384,98, 0,256,96, 768,608, 480, 0, 64,320, 832, 30,28,540, $\\$412, 123, 24,280,120,792, 
	632, 504, 0,16,80,336, 848)$} & yes & - \\ \hline
S3 & 1213432312 & yes &no &  \shortstack{$w_3= (0, 512, 384, 74, 0, 256, 72, 768, 584, 456, 0, 64, 320, 832, 58, 56, 568,$\\$
	440, 111, 48, 304, 108, 816, 620, 492, 0, 32, 96, 352, 864)$} & no & - \\ \hline
S4& 1214321432 & yes &no &  \shortstack{$w_4= (0, 512, 384, 56, 0, 256, 48, 768, 560, 432, 0, 32, 288, 800, 120, 112, 624, $\\$496, 63, 96, 
	352, 54, 864, 566, 438, 0, 64, 36, 292, 804)$} & no & - \\ \hline
S5& 1232124321 & yes &yes &  \shortstack{$w_5= (0, 512, 288, 224, 0, 256, 192, 768, 704, 432, 0, 128, 384, 896, 15, 14, 526,$\\$ 302, 238, 12,
     268, 204, 780, 716, 444, 0, 8, 136, 392, 904)$} & yes & - \\ \hline
S6& 1232143213 & yes &yes &  \shortstack{$w_6= (0, 512, 288, 224, 0, 256, 192, 768, 704, 420, 0, 128, 384, 896, 30, 28, 540,$\\$ 316, 252, 24,
      280, 216, 792, 728, 437, 0, 16, 144, 400, 912)$} & yes & - \\ \hline
S7& 1232432123 & yes &yes &  \shortstack{$w_7 = (0, 512, 260, 196, 0, 256, 192, 768, 704, 390, 0, 128, 384, 896, 60, 56, 568, $\\$310, 246, 48,
      304, 240, 816, 752, 423, 0, 32, 160, 416, 928)$} & yes & - \\ \hline       
S8& 1234321232 & yes &no &  \shortstack{$w_8= (0, 512, 264, 152, 0, 256, 144, 768, 656, 396, 0, 128, 384, 896, 120, 112, 624, $\\$364, 219, 96,
	352, 210, 864, 722, 462, 0, 64, 192, 448, 960)$} & no & - \\ \hline
S9& 1234321323 & yes &no &  \shortstack{$w_9= (0, 512, 264, 152, 0, 256, 144, 768, 656, 394, 0, 128, 384, 896, 120, 112, 624,$\\$ 362, 222, 96,
	352, 212, 864, 724, 459, 0, 64, 192, 448, 960)$}& no & - \\	 \hline
S10& 1243212432 & yes &no &  \shortstack{$w_{10}= (0, 512, 272, 112, 0, 256, 96, 768, 608, 344, 0, 64, 320, 832, 240, 224, 736, $\\$472, 119, 192,
      448, 102, 960, 614, 350, 0, 128, 68, 324, 836)$} & no & - \\ \hline           
      \end{tabular}
\end{center}
\end{table} 

\begin{table}
\begin{tabular}{| l|  l| l| l|  l| l| l| }
\hline
 &$\underline w_0$ & Normal& MP & weight vector & Prime & Uni. Eq. \\
\hline   
S11& 1243214323 & yes &no &  \shortstack{$w_{11} = (0, 512, 272, 112, 0, 256, 96, 768, 608, 338, 0, 64, 320, 832, 240, 224, 736, $\\$466, 126, 192,
      448, 108, 960, 620, 347, 0, 128, 72, 328, 840)$}  & no & - \\ \hline
S12& 1321324321 &yes & no &  \shortstack{$w_{12}= (0, 512, 192, 448, 0, 128, 384, 640, 896, 240, 0, 256, 160, 672, 15, 14, 526,$\\$206, 462, 12,
      140, 396, 652, 908, 252, 0, 8, 264, 168, 680)$} & no & - \\ \hline    
S13& 1321343231 & yes &no &  \shortstack{$w_{13}= (0, 512, 192, 448, 0, 128, 384, 640, 896, 228, 0, 256, 160, 672, 29, 28, 540,$\\$ 220, 476, 24,
      152, 408, 664, 920, 246, 0, 16, 272, 176, 688)$} & no & - \\ \hline
S14& 1321432143 & yes &no &  \shortstack{$w_{14}= (0, 512, 192, 448, 0, 128, 384, 640, 896, 216, 0, 256, 144, 656, 60, 56, 568,$\\$248, 504, 48,
      176, 432, 688, 944, 219, 0, 32, 288, 146, 658)$} & no & - \\ \hline
S15& 1323432123 & yes &no &  \shortstack{$w_{15}= (0, 512, 132, 388, 0, 128, 384, 640, 896, 198, 0, 256, 192, 704, 60, 56, 568,$\\$182, 438, 48,
      176, 432, 688, 944, 231, 0, 32, 288, 224, 736)$} & no & - \\ \hline

S16& 1324321243 &yes & no &  \shortstack{$w_{16} = (0, 512, 136, 392, 0, 128, 384, 640, 896, 172, 0, 256, 160, 672, 120, 112, 624,$\\$ 236, 492, 96,
      224, 480, 736, 992, 175, 0, 64, 320, 162, 674)$} & no & - \\ \hline
S17& 1343231243 & yes &no &  \shortstack{$w_{17}= (0, 512, 48, 304, 0, 32, 288, 544, 800, 60, 0, 256, 40, 552, 240, 224, 736, 188,$\\$444, 192,
      168, 424, 680, 936, 63, 0, 128, 384, 42, 554)$} & no & - \\ \hline
S18& 2123214321 & yes &yes &  \shortstack{$w_{18}= (0, 256, 768, 112, 0, 512, 96, 384, 352, 864, 0, 64, 576, 448, 15, 14, 270, 782,$\\$126, 12, 524,
      108, 396, 364, 876, 0, 8, 72, 584, 456)$} & yes & \shortstack{S1, S26, \\ S29}  \\ \hline
S19& 2123243212 & yes &yes &  \shortstack{$w_{19}= (0, 256, 768, 98, 0, 512, 96, 384, 352, 864, 0, 64, 576, 448, 30, 28, 284, 796,$\\$ 123, 24, 536,
      120, 408, 376, 888, 0, 16, 80, 592, 464)$} & yes & - \\ \hline
S20& 2123432132 & yes &no &  \shortstack{$w_{20}= (0, 256, 768, 76, 0, 512, 72, 384, 328, 840, 0, 64, 576, 448, 60, 56, 312, 824,$\\$111, 48, 560,
      106, 432, 362, 874, 0, 32, 96, 608, 480)$} & no & - \\ \hline
S21& 2132134321 &yes & yes &  \shortstack{$w_{21} = (0, 256, 768, 224, 0, 512, 192, 320, 448, 960, 0, 128, 640, 336, 15, 14, 270,$\\$782, 238, 12,
      524, 204, 332, 460, 972, 0, 8, 136, 648, 344)$} & yes & - \\ \hline
S22& 2132143214 &yes & yes &  \shortstack{$w_{22} = (0, 256, 768, 224, 0, 512, 192, 320, 448, 960, 0, 128, 640, 328, 30, 28, 284,$\\$ 796, 252, 24,
      536, 216, 344, 472, 984, 0, 16, 144, 656, 329)$} &yes & - \\ \hline
S23& 2132343212 & yes &yes &  \shortstack{$w_{23} = (0, 256, 768, 194, 0, 512, 192, 320, 448, 960, 0, 128, 640, 352, 30, 28, 284,$\\$796, 219, 24,
      536, 216, 344, 472, 984, 0, 16, 144, 656, 368)$} & yes & - \\ \hline
S24& 2132432124 & yes &yes &  \shortstack{$w_{24}= (0, 256, 768, 196, 0, 512, 192, 320, 448, 960, 0, 128, 640, 336, 60, 56, 312,$\\$824, 246, 48,
      560, 240, 368, 496, 1008, 0, 32, 160, 672, 337)$} & yes & - \\ \hline
S25& 2134321324 &yes & no &  \shortstack{$w_{25}= (0, 256, 768, 152, 0, 512, 144, 272, 400, 912, 0, 128, 640, 276, 120, 112,$\\$368, 880, 222, 96,
      608, 212, 340, 468, 980, 0, 64, 192, 704, 277)$} & no & - \\ \hline    
      
S26& 2321234321 &yes & yes &  \shortstack{$w_{26}= (0, 64, 576, 448, 0, 512, 384, 96, 352, 864, 0, 256, 768, 112, 15, 14, 78,$\\$590, 462, 12, 524,
      396, 108, 364, 876, 0, 8, 264, 776, 120)$} & yes & \shortstack{S1, S18, \\ S29} \\  \hline
S27& 2321243214 & yes &yes &  \shortstack{$w_{27}= (0, 64, 576, 448, 0, 512, 384, 96, 352, 864, 0, 256, 768, 104, 30, 28, 92,$\\$604, 476, 24, 536,
      408, 120, 376, 888, 0, 16, 272, 784, 105)$} & yes & - \\ \hline
S28& 2321432134 & yes &yes &  \shortstack{$w_{28}= (0, 64, 576, 448, 0, 512, 384, 72, 328, 840, 0, 256, 768, 74, 60, 56, 120,$\\$632, 504, 48, 560,
      432, 106, 362, 874, 0, 32, 288, 800, 75)$} & yes & - \\ \hline
S29& 2324321234 & yes &yes &  \shortstack{$w_{29}= (0, 8, 520, 392, 0, 512, 384, 12, 268, 780, 0, 256, 768, 14, 120, 112, 108,$\\$620, 492, 96, 608,
      480, 78, 334, 846, 0, 64, 320, 832, 15)$} & yes &  \shortstack{S1, S18,\\ S26}\\ \hline
S30& 2343212324 &yes & yes &  \shortstack{$w_{30}= (0, 16, 528, 304, 0, 512, 288, 24, 280, 792, 0, 256, 768, 28, 240, 224, 216,$\\$ 728, 438, 192,
      704, 420, 156, 412, 924, 0, 128, 384, 896, 29)$} & yes & - \\ \hline
S31& 2343213234 &yes & yes &  \shortstack{$w_{31}= (0, 16, 528, 304, 0, 512, 288, 20, 276, 788, 0, 256, 768, 22, 240, 224, 212,$\\$724, 444, 192,
      704, 424, 150, 406, 918, 0, 128, 384, 896, 23)$} & yes & - \\ \hline
FFLV & - & yes &yes &  \shortstack{$w^{reg}=(0,4,6,6,0,3,4,6,6,9,0,2,4,6,4,3,4,7,8,2,3,5,4,6,8,0,1,2,3,4)$\\
$w^{min}=(0,3,4,3,0,2,2,4,3,5,0,1,2,3,1,1,1,3,3,1,1,2,1,2,3,0,1,1,1,1)$}  &\shortstack{yes\\yes} &-\\
\hline
	  
    \end{tabular}\caption{String polytopes depending on $\underline w_0$ and the FFLV polytope for $\Flag_5$ and $\rho$, their normality, the weak Minkowski property, the weight vectors constructed in \S \ref{string:weight}, primeness of the binomial initial ideals, and unimodular equivalences among the polytopes.}\label{tab:stringweight5}
\end{table}

\end{document}